\numberwithin{equation}{section}
\newtheorem{theorem}{Theorem}[section]
\newtheorem{lemma}[theorem]{Lemma}
\newtheorem{corollary}[theorem]{Corollary}
\newtheorem{proposition}[theorem]{Proposition}
\newtheorem{claim}[theorem]{Claim}
\theoremstyle{definition}
\newtheorem{definition}[theorem]{Definition}
\theoremstyle{remark}
\newtheorem{remark}[theorem]{Remark}
\newcommand{\N}{{\mathbb N}}
\newcommand{\R}{{\mathbb R}}
\newcommand{\Z}{{\mathbb Z}}
\newcommand{\x}{{\mathbf{x}}}
\newcommand{\y}{{\mathbf{y}}}
\newcommand{\m}{\mathbf m}
\newcommand{\w}{\mathbf w}
\newcommand{\dr}{{\mathbf D}}
\newcommand{\J}{{\mathcal J}}
\newcommand{\A}{{\mathcal A}}
\newcommand{\cross}{\times}
\newcommand{\mcg}{\mathcal{MC\kern0.04emG}}
\newcommand{\ml}{\mathcal{M \kern0.07emL}}
\newcommand{\pmf}{\mathcal{P \kern0.07em M \kern0.07em F}}
\newcommand{\teichmuller}{Teichm{\"u}ller{ }}
 \let\c@theorem=\c@subsection
 \let\c@conjecture=\c@subsection
 \let\c@lemma=\c@subsection
 \let\c@proposition=\c@subsection
 \let\c@claim=\c@subsection
 \let\c@question=\c@subsection
 \let\c@criterion=\c@subsection
 \let\c@vfconj=\c@subsection
 \let\c@definition=\c@subsection
 \let\c@notation=\c@subsection
 \let\c@remark=\c@subsection
 \let\c@example=\c@subsection
 \let\c@equation=\c@subsection
 \let\c@figure=\c@subsection
 \let\c@wrapfigure=\c@subsection
\begin{document}

\title[non-classical interval exchange]{Dynamics of non-classical interval exchanges}
\author[Gadre]{Vaibhav S Gadre}
\address{Mathematics, Harvard University, One Oxford Street, Cambridge,  MA 02138, USA}
\email{vaibhav@math.harvard.edu}

\begin{abstract}
A natural generalization of interval exchange maps are linear involutions, first introduced by Danthony and Nogueira \cite{Dan-Nog}. Recurrent train tracks with a single switch provide a subclass of linear involutions. We call such linear involutions non-classical interval exchanges. They are related to measured foliations on orientable flat surfaces. 

Non-classical interval exchanges can be studied as a dynamical system by considering Rauzy induction in this context. This gives a refinement process on the parameter space similar to Kerckhoff's simplicial systems. We show that the refinement process gives an expansion that has a key dynamical property called {\it uniform distortion}. We use uniform distortion to prove normality of the expansion. Consequently, we prove an analog of Keane's conjecture: almost every non-classical interval exchange is uniquely ergodic. Uniform distortion has been independently shown in \cite{Avi-Res}.
\end{abstract}

\maketitle

\section{Introduction}

Here, we are interested in the dynamical properties of non-classical interval exchanges. In a classical interval exchange, an interval $I$ is partitioned into $d$ subintervals, these subintervals are permuted and glued back preserving orientation to get $I$. This gives a Lebesgue measure preserving map from $I$ to itself. The parameter data that completely determines the map is: first, the lengths of the subintervals and second, the permutation for reshuffling the subintervals. There
is a way to draw these maps pictorially:

\begin{wrapfigure}{r}{0.35\textwidth}
\begin{center}
\ \epsfig{file=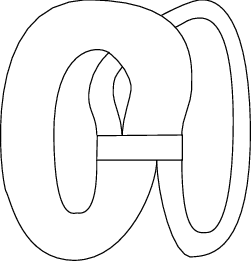, width=0.3\textwidth}
\end{center}
\caption{A classical interval exchange.}
\label{class-iem}
\end{wrapfigure}

We draw the original interval $I$ horizontally and then thicken it vertically to get two copies $I_+$ and $I_-$, the top and the bottom intervals. Divide $I_+$ into $d$ subintervals with the prescribed lengths. Divide $I_-$ also into $d$ subintervals but incorporate the permutation to decide the lengths. Each subinterval of $I_+$  pairs off, by the permutation, with a subinterval of $I_-$ with the same length. Join these subintervals by a band of uniform width equal to their common length. For example, Figure~\ref{class-iem} shows a classical interval exchange with two bands. The interval exchange is exhibited as a map from $I_+$ to $I_+$ given by the vertical flow up the bands from $I_+$ to $I_-$ followed by switching to $I_+$ by identity. The inverse of the interval exchange is then a map from $I_-$ to $I_-$ by flowing down the bands from $I_-$ followed by switching to $I_-$ by identity.

Train tracks enter the discussion as follows: To each band we associate a central edge that goes from $I_+$ to $I_-$. If we collapse each band to it's central edge, then collapse the thickened interval horizontally to a vertical edge and finally collapse the vertical edge, we get a train track with a single switch. The train track has the property that every branch outgoing from one side of the switch is incoming from the other side. A generalization is obtained by relaxing this last property. The associated picture is then a thickened interval with bands in which bands from $I_+$ to $I_+$ or $I_-$ to $I_-$ are allowed. In the terminology of \cite{Dan-Nog}, the resulting dynamical system is a linear involution with no flips. The bands from $I_+$ to $I_+$ or $I_-$ to $I_-$ will be called orientation reversing bands. As before, the space of parameters is the widths of the bands and the exchange combinatorics. 

Here, we restrict to recurrent tracks. This imposes the condition that there are orientation reversing bands on both $I_+$ and $I_-$. We call such linear involutions non-classical interval exchanges. They define a Lebesgue measure preserving dynamical system on $I_+ \sqcup I_-$. See Section~\ref{niem} for precise definitions and details. Non-classical interval exchanges are first return maps  induced on a transverse interval by vertical measured foliations on flat surfaces defined by quadratic differentials that are not squares of abelian differentials. 

In contrast to classical interval exchanges, the widths of bands satisfy an additional constraint: the sum of the widths of the orientation reversing bands on $I_+$ is equal to the sum of the widths of the orientation reversing bands on $I_-$. 

\subsection{Keane's conjecture:}
For classical interval exchanges, Keane conjectured that if the reshuffle permutation is irreducible, then for almost all widths of the bands, the associated exchange is uniquely ergodic with the Lebesgue measure the only invariant probability measure. This conjecture was first proved, independently, by Masur \cite{Mas} and Veech \cite{Vee}. See also Rees \cite{Ree} and Zorich \cite{Zor}. A key tool was Rauzy induction, alternatively called splitting: this is the process of inducing a classical interval exchange on a suitable subinterval by considering the first return map. Veech \cite{Vee} used this to develop a continued fraction expansion for classical interval exchanges. In fact, when $d=2$, the expansion is equivalent to the continued fraction expansion of the ratio of the widths of the two bands. Veech showed that the expansion has most of the nice properties of the classical Gauss map. He derived unique ergodicity as a consequence of this.

A similar conjecture was also made for transverse measured foliations on a Riemann surface. Masur proved this conjecture along with the original Keane conjecture in \cite{Mas}; his approach relied on techniques from \teichmuller theory and applied to both contexts. 

\subsubsection{Kerckhoff's approach:} Later, Kerckhoff \cite{Ker} developed a different and more elementary approach that does not rely on \teichmuller theory, to include both conjectures under a unified setup. In his formulation, Rauzy induction on one hand and train track splittings on the other are both encoded as a refinement process on the associated parameter spaces.

For a classical interval exchange with $d$ bands, the parameter space at each stage can be identified with the standard simplex of dimension $(d-1)$. Rauzy induction splits this simplex into two halves, each a $(d-1)$-simplex itself. A point in the original simplex picks out the half in which it lies. Additionally, there is a map from the new parameter space to the initial parameter space identifying the standard simplex with the half that is picked out. After this identification, the induction is applied to the simplex given by the new stage and the process continues. Associated  to a finite sequence of inductions, there is a map from the current simplex back into the starting simplex. This map is defined iteratively as a composition of maps in the individual steps of the sequence.

It turns out that almost surely, iterations give an infinite expansion consisting of nested simplices. There is a canonical identification of the infinite intersection of these nested simplices with the set of invariant probability measures. The refinement process on a simplex can be defined purely abstractly; Kerckhoff called this a simplicial system and the resulting expansion, a simplicial expansion. He showed that if a simplicial system satisfies a combinatorial condition called the  ``absence of isolated blocks'', then for almost every initial point in the starting simplex, the resulting simplicial expansion is normal: almost surely, every finite sequence of inductions that can occur in the expansion, does occur infinitely often. Finally, normality and the existence of finite sequences which shrink diameter by a definite amount ensure that, almost surely the nested sequence of simplices, actually nests down to a point. To finish the proof for classical interval exchanges, Kerckhoff showed that if the reshuffle permutation is irreducible then the associated simplicial system has no isolated blocks. Consequently, almost surely, the expansion nests down to a point and so there is a unique invariant probability measure. 

For general train tracks, the parameter space is the set of possible weights carried by the train track, normalized so that their sum is one. This is a convex subset of the ambient simplex, cut out by the switch conditions of the train track. In this context, some aspects of the refinement process on the associated parameter space are understood. See \cite{Mos}. However, we do not know how to carry out Kerckhoff's approach in this general setup.

Here, we restrict to non-classical interval exchanges instead. In terms of encoding measured foliations that arise in a given strata of quadratic differentials, there is no loss of generality incurred by this restriction. The advantages are: first, the parameter space has codimension 1 in the standard $(d-1)$-dimensional simplex and is easier to analyze, second, the combinatorics of the Rauzy diagram is better understood by the work of Boissy and Lanneau \cite{Boi-Lan}. The main theorem we prove is:

\begin{theorem}[Normality]\label{normality}
For non-classical interval exchanges, almost surely, the expansion is normal, i.e.~every finite non-transient sequence that can occur in the expansion, does occur infinitely often.
\end{theorem}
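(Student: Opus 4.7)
The plan is to follow the blueprint that Kerckhoff developed for simplicial expansions, adapted to the non-classical setting, with uniform distortion playing the role that the combinatorial ``absence of isolated blocks'' hypothesis played in his argument. Since the collection of admissible non-transient finite words in the alphabet of Rauzy moves is countable, it suffices to prove that any single such word $w$ occurs infinitely often in the expansion of Lebesgue-almost every starting point; normality then follows by a countable intersection. The main tool will be a conditional Borel--Cantelli argument applied to the natural filtration $\{\F_n\}$ generated by the sequence of Rauzy moves, with the relevant conditional probabilities controlled uniformly from below via uniform distortion.

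Fix a non-transient word $w$ starting at combinatorial type $\tau$. After $n$ steps the refinement has carved out a nested parameter region $\Delta_n \subset \Delta_0$ of current type $\tau_n$; this is a convex subset of the ambient $(d-1)$-simplex, of codimension one because of the single switch condition. Equip each $\Delta_n$ with the normalized Lebesgue measure $\mu_n$ on this slice. Let $n_1 < n_2 < \cdots$ be the random stopping times at which $\tau_{n_k} = \tau$, and let $A_k$ be the event that the next $|w|$ moves after stage $n_k$ agree with $w$. Then
\[
\mathbb{P}(A_k \mid \F_{n_k}) \; = \; \frac{\mu_{n_k}(\Delta_{n_k}^w)}{\mu_{n_k}(\Delta_{n_k})},
\]
where $\Delta_{n_k}^w \subset \Delta_{n_k}$ is the sub-region carved out by applying $w$.

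Uniform distortion now supplies the required lower bound. The projective identification $\Delta_{n_k} \to \Delta_0(\tau)$ onto the standard parameter region of type $\tau$ pulls back Lebesgue measure with a Radon--Nikodym derivative whose logarithm is bounded by a constant depending only on $\tau$, uniformly in $k$ and in the prior history. Transporting the numerator and denominator simultaneously and absorbing the distortion constants, one obtains
\[
\mathbb{P}(A_k \mid \F_{n_k}) \; \geq \; c_\tau \cdot \frac{\mu_\tau\bigl(\Delta_0(\tau)^w\bigr)}{\mu_\tau\bigl(\Delta_0(\tau)\bigr)} \; =: \; c(w) \; > \; 0,
\]
where the right-hand side is strictly positive because $w$ is admissible, so the corresponding sub-region has positive slice measure. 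Provided that there are infinitely many $n_k$ almost surely, L\'evy's conditional Borel--Cantelli lemma then forces $A_k$ to occur infinitely often almost surely, which is what we want.

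It remains to show that the stopping times $n_k$ are a.s.\ infinite in number. Non-transience of $w$ means that $\tau$ lies in a recurrent component of the Rauzy diagram, so from any type in that component there is an admissible finite word that reaches $\tau$. Uniform distortion again gives a uniform positive lower bound on the conditional probability of executing such a word within a bounded window, and a standard Markov-style recurrence argument (really a second application of conditional Borel--Cantelli to these shorter connecting words) shows that the process returns to type $\tau$ infinitely often almost surely. Taking the countable intersection over all admissible non-transient words $w$ then yields normality. The principal obstacle in carrying this plan out is ensuring that the uniform distortion estimate is truly uniform: its constants must depend only on the combinatorial type at the endpoints of a segment of the itinerary, not on the length or complexity of the intervening history. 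Once that quantitative statement is in hand (as established earlier in the paper), the Borel--Cantelli step is essentially routine, and the codimension-one nature of the parameter space enters only in the bookkeeping for the slice measure $\mu_n$.
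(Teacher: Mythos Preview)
Your outline has the right shape—a conditional Borel--Cantelli argument at well-chosen stopping times—but there is a genuine gap in how you invoke uniform distortion. You define $n_k$ to be the stopping times at which the combinatorial type returns to $\tau$, and then assert that the Radon--Nikodym derivative of the identification $\Delta_{n_k}\to\Delta_0(\tau)$ is bounded by a constant depending only on $\tau$, ``uniformly in $k$ and in the prior history.'' That is not what Theorem~\ref{uniform-distortion} says, and it is in fact false. Theorem~\ref{uniform-distortion} guarantees only that from any stage, almost every expansion \emph{eventually reaches some} $C$-uniformly distorted stage; it does not say that \emph{every} visit to a given combinatorial type is uniformly distorted. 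The two-band example in Section~\ref{example} already shows this: after $n$ consecutive $L$-splits one is back at the unique combinatorial type, but the column norms are $1$ and $n+1$, so the distortion is unbounded in $n$. The same issue undermines your recurrence argument for the infinitude of the $n_k$.

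The paper's proof avoids this by using a different sequence of stopping times. It first establishes (Theorem~\ref{uniform-infinite}, which rests on the hard work of Section~\ref{proof}) that $C$-distributed stages occur infinitely often almost surely, and then arranges—by appending a bounded splitting sequence with controlled cost—that these stages can be taken combinatorially equal to $\pi_0$. Only at \emph{these} particular stages is the distortion known to be bounded by the universal constant $C$, and it is at these stages that the conditional probability of the word $\jmath$ following is comparable to its unconditional probability $\mu>0$. The Borel--Cantelli step is then applied along this subsequence. So the fix to your argument is to replace ``the $k$-th return to $\tau$'' by ``the $k$-th $C$-distributed stage of type $\pi_0$,'' and to recognize that the existence of infinitely many such stages is itself the substantive content supplied by Theorem~\ref{uniform-distortion} and its iteration, not a consequence of a separate recurrence argument.
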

\noindent The adjective ``non-transient'' will become clear in Section~\ref{irreducibility}.

As we will see in Section~\ref{dynamics}, combinatorially, there are finitely many subsets inside the standard $(d-1)$-simplex which can be the parameter space for a non-classical interval exchange. We call each such subset a configuration space and denote them by $W_r$. For all subsequent discussion, we fix a combinatorial type $\pi_0$ for the initial non-classical interval exchange and call it the starting stage. The starting stage is assumed to be strongly irreducible and in an attractor of the Rauzy diagram (see Section~\ref{dynamics} for the precise details). Let $W_0$ be the configuration space at $\pi_0$. As we shall see in Section~\ref{dynamics}, for almost every point in $W_0$, iterated Rauzy induction defines an infinite expansion. Let $\pi'$ be a subsequent stage obtained from $\pi_0$ by a finite sequence of inductions. Let $W'$ be the configuration space at $\pi'$. The associated map $f'$ relates the new widths of the bands to the starting widths in $\pi_0$, by giving a diffeomorphism  from $\Delta$ onto a subset of itself, such that $f'(W') = W_0 \cap f'(\Delta)$. Fix a constant $C>1$. The stage $\pi$ is $C$-uniformly distorted if, for any pair of points $\y, \bar{\y}$ in $W$, the Jacobian of $f'$ thought of as a map from $W$ to $W_0$, satisfies
\[
\frac{1}{C} < \frac{\J(f')(\y)}{\J(f')(\bar{\y})} < C
\]
Let $\pi$ be any stage obtained from $\pi_0$ by a finite sequence of inductions. Let $W$ be the configuration space at $\pi$ and $f$ the associated map. The main technical theorem required to prove Theorem~\ref{normality} is the uniform distortion theorem, stated below:

\begin{theorem}[Uniform Distortion]\label{uniform-distortion}
There exists a constant $C>1$, independent of the stage $\pi$, such that, for almost every $\x \in f(W)$, the associated expansion of $\x$ after $\pi$, has some stage $\pi'$, depending on $\x$, such that $\pi'$ is $C$-uniformly distorted. Moreover, the combinatorics of $\pi'$ can be assumed to be the same as $\pi_0$.
\end{theorem}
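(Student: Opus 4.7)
The plan is to follow the scheme of Kerckhoff's simplicial systems, adapting the positivity argument to the codimension-one parameter space $W$. A finite sequence of Rauzy inductions from $\pi_0$ to $\pi$ is encoded by a product $M$ of non-negative elementary matrices, so on $\Delta$ the associated map takes the projective form $f(\y) = M\y/\|M\y\|_1$. A direct calculation gives
\[
\J(f)(\y) \;=\; \frac{\det(M)\,h(\y)}{\|M\y\|_1^{d-1}},
\]
where $h(\y)$ is a mild geometric correction coming from the position of the switch-condition hyperplane inside $\Delta$. The Jacobian ratio $\J(f)(\y)/\J(f)(\bar\y)$ is therefore controlled by the oscillation of the linear form $\y\mapsto \|M\y\|_1$ on $W'$, which in turn is controlled by how comparable the column sums of $M$ are when tested against vectors satisfying the switch condition.

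Second, I would single out a combinatorial class of \emph{balanced blocks}: finite induction words whose matrix has all switch-restricted column sums comparable within a universal ratio. Using that $\pi_0$ lies in a sink of the Rauzy diagram and that the sink is strongly connected, for every combinatorial type $\pi$ in the sink there exists a closed loop through $\pi_0$ that traverses every arrow of the sink at least once. A direct linear-algebra argument, finite in the combinatorial data of the sink, should show that every such loop produces a balanced matrix, and hence a stage $\pi'$ (combinatorially equal to $\pi_0$) that is $C$-uniformly distorted for a universal $C$.

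To show that almost every $\x \in f(W)$ actually reaches such a stage, I would estimate directly the measure of the exceptional set. The subset of $f(W)$ whose first $N$ inductions fail to complete any balanced block is a union of sub-simplices whose relative Lebesgue volume decays geometrically in $N$: at each induction step that fails to traverse one of the still-missing arrows, a definite fraction of the remaining measure is shed, and the decay constant depends only on the finitely many combinatorial types in the sink. Borel--Cantelli then forces the limit set to have measure zero, using only the combinatorial structure of the splitting and no ergodic input, which preserves the logical order of Theorems~\ref{uniform-distortion} and \ref{normality}.

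The principal obstacle is the balanced-block step. In the classical setting, a positive non-negative matrix automatically yields bounded projective distortion on the whole simplex, but here the configuration space $W$ is a switch hyperplane that is not preserved by arbitrary non-negative column operations, because the orientation-reversing bands contribute non-trivially to the switch relation. One must track those contributions carefully and verify that the oscillation of $\|M\y\|_1$ really is bounded when $\y$ is constrained to the switch hyperplane, which is where the genuine non-classical content of the argument lies. A secondary difficulty is making the geometric decay estimate in the last paragraph uniform across the finitely many configuration spaces $W_r$ in the sink, which should reduce to a finite combinatorial check together with the strong connectivity of the sink.
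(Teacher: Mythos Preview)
Your plan has two genuine gaps, and the paper's own example in Section~\ref{example} is exactly the counterexample to the second one.

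First, the Jacobian. You write the restricted Jacobian as $\det(M)\,h(\y)/\|M\y\|_1^{d-1}$ with $h$ a ``mild geometric correction'' and then silently treat $h$ as harmless. In fact the crucial content of Section~\ref{Jacobian} is the lemma that this correction is \emph{constant} on $W$; without that, you cannot reduce $C$-uniform distortion to the $C^{1/(d-1)}$-distribution of the columns of $Q$ restricted to the switch hyperplane, and your whole balanced-block criterion never gets off the ground. This is not a formality: the projection $\phi_{\w}$ along $\w$ onto $T_\w\Delta$ depends on $\w$, and one has to check that the $\m_0$-component of $\phi_\w(Q\m)$ does not.

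Second, and more seriously, your measure-decay mechanism is wrong. You assert that ``at each induction step that fails to traverse one of the still-missing arrows, a definite fraction of the remaining measure is shed, and the decay constant depends only on the finitely many combinatorial types in the sink.'' The explicit splitting sequence in Section~\ref{example} shows this is false for non-classical interval exchanges: at a stage with matrix $Q$ whose columns are very unbalanced, the conditional probability of either split can be arbitrarily close to $1/2$ regardless of how large the resulting norm increase is. So the probability of following any prescribed arrow is \emph{not} bounded below by a constant depending only on the combinatorics; it depends on the accumulated matrix $Q$. Your Borel--Cantelli step therefore has no input.

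Relatedly, your balanced-block idea does not do what you want. Even if a fixed closed loop $\jmath$ from $\pi_0$ back to $\pi_0$ has a matrix $M_\jmath$ with comparable column sums, the stage $\pi'$ you reach has cumulative matrix $Q_\pi M_\jmath$, and it is the columns of \emph{this} product that must be comparable for $\pi'$ to be $C$-uniformly distorted. An unbalanced $Q_\pi$ will typically destroy whatever balance $M_\jmath$ had. What is needed is not a fixed good word but a mechanism that, starting from an arbitrary $Q_\pi$, drives the columns towards being comparable with definite probability.

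The paper's route is precisely designed around this obstruction. The replacement for your uniform split-probability bound is Proposition~\ref{measure-norm}: the ``norm-increase'' estimate holds, but only for a band whose column is already within a fixed factor of the \emph{largest} column. This is then fed into Kerckhoff's inductive step (Proposition~\ref{inductive-step}): one maintains a growing set $\A_0$ of $C_0$-distributed columns containing the largest, and shows that with probability bounded below (depending only on $C_0$ and $d$) one can enlarge $\A_0$ while keeping it $C_1$-distributed. The delicate Step~1 of that proposition, together with Lemma~\ref{split-estimate}, is where the split-probability lower bounds are actually proved, and they hold only because the splitting column is already comparable to the largest. Iterating the inductive step (Proposition~\ref{equidis}) and then exhausting the complement gives the full-measure statement. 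The ``combinatorially $\pi_0$'' clause is obtained afterwards, in Section~\ref{proof-of-normality}, by appending a bounded path back to $\pi_0$ at controlled cost. Your sketch does not contain any analogue of this largest-column mechanism, and without it the argument does not close.
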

Theorem~\ref{uniform-distortion} is proved independently in \cite{Avi-Res} as a key step in studying \teichmuller flow on strata of quadratic differentials. See Theorem 4.2 of \cite{Avi-Res}. In \cite{Gad2}, we apply Theorem~\ref{uniform-distortion} to a question about random walks on mapping class groups. In a different direction, Theorem~\ref{uniform-distortion} also fixes a small gap in the proof in \cite{Ker2} that the handlebody limit set has measure zero \cite{Gad3}.

Theorem~\ref{uniform-distortion} implies that the relative probability that a permissible finite sequence occurs right after a uniformly distorted stage, is roughly the same as the probability that an expansion begins with that sequence. Normality is a straightforward consequence of this phenomena.

This approach was outlined by Kerckhoff in \cite{Ker} and carried out for classical interval exchanges. For general train tracks, there are two issues: First, as we shall see in an example in Section~\ref{example}, the probability of a split which is the proportion of the volume of the part of the configuration space that is inside the smaller simplex picked out by the split, can be very different from the ratio of the volumes of the ambient simplices. Second, splitting sequences of general train tracks can have isolated blocks. Because of these issues, the proof of unique ergodicity for measured foliations in \cite{Ker} is incomplete.

We do not know how to fix this for general train tracks. Nevertheless, we resolve these problems for non-classical interval exchanges, by proving Theorems~\ref{uniform-distortion} and~\ref{normality}. As a consequence, we also show in Theorem~\ref{parameter-ergodic} that the map on the parameter space given by Rauzy induction, is ergodic. 

\subsection{Outline of the paper:}
In Section~\ref{niem}, we see how the definition of a classical interval exchange generalizes naturally to a non-classical interval exchange. In Section~\ref{Rauzy}, we define Rauzy induction and explain how it is encoded by matrices. In Section~\ref{Rauzy-diagram}, we define the associated directed graph called the Rauzy diagram. In Section~\ref{irreducibility}, we discuss irreducibility for non-classical interval exchanges. We explain the connection, established in \cite{Boi-Lan}, of the concept of irreducibility to the dynamics of the non-classical interval exchanges and attractors of the Rauzy diagram. We also explain the relevance of these issues to the main theorems, Theorem~\ref{uniform-distortion} and Theorem~\ref{normality}. In Section~\ref{dynamics}, we provide the details for the refinement process on the parameter space. We show that for almost all parameters, we get an infinite expansion. We then formulate the main theorem, Theorem~\ref{Uniform-distortion}, a more precise version of Theorem~\ref{uniform-distortion}. In Section~\ref{example}, we illustrate by an example the main ideas of Kerckhoff's original proof for classical interval exchanges. We also illustrate by the way of an example the key issues that make the task harder for non-classical interval exchanges. One of the issues is that the parameter space for non-classical interval exchanges is codimension 1 in the ambient simplex. Resolving it requires understanding the Jacobian of the restriction of the projective linear map to this subset, instead of considering the full simplex. This is done in Section~\ref{Jacobian}. In Section~\ref{measure-theory}, we recall the main results in the measure theory of projective linear maps with non-negative entries from a standard simplex into itself, and use it to compute the measures of some standard subsets that we encounter later. Section~\ref{proof} gives the detailed proof of Theorem~\ref{Uniform-distortion}. In Section~\ref{proof-of-normality}, we prove normality from Theorem~\ref{Uniform-distortion}. Finally in Section~\ref{unique-ergodicity}, we use normality to show unique ergodicity. As a consequence, we show in Section~\ref{Rauzy-par}, that the Rauzy map on the parameter space is ergodic. 

\subsection{Acknowledgements:}
The research was supported by NSF graduate fellowship under Nathan Dunfield by grant \# 0405491 and \#0707136. This work was done while I was at University of Illinois, Urbana-Champaign. I thank my advisor, Nathan Dunfield, for numerous discussions and his careful perusal of the paper. I thank Steve Kerckhoff and Chris Leininger for helpful conversations during the course of the work. I thank Corentin Boissy and Erwan Lanneau for explaining their results on linear involutions. I am also grateful to the anonymous referees for their detailed comments on earlier drafts.

\section{Non-classical Interval Exchanges}\label{niem}

For a precise definition of a classical interval exchange, see \cite{Yoc}. Here, we focus on representing it pictorially.

Let $\A$ denote an alphabet over $d$ letters. In the definition that follows, the set $\A$ labels the bands. A classical interval exchange is determined by the lengths $(\lambda_\alpha), \alpha \in \A$ of the subintervals and bijections $p_0$ and $p_1$ from $\A$ to the set $\{1, \dotsc, d\}$ as follows: In the plane, draw the interval $I = [0,\sum_\alpha \lambda_\alpha)$ along the horizontal axis and then thicken it slightly in the vertical direction to get two copies, $I_+$ and $I_-$. Call them top interval and bottom interval respectively. Let $\epsilon: I_+ \sqcup I_- \to I_+ \sqcup I_-$ be the map that switches the intervals i.e., $\epsilon(I_+) = I_-$ and $\epsilon(I_-) = I_+$. Subdivide $I_+$ into $d$ subintervals with widths $\lambda_{p_0^{-1}(1)}, \dotsc, \lambda_{p_0^{-1}(d)}$ from left to right. Subdivide $I_-$ into $d$ subintervals with widths $\lambda_{p_1^{-1}(1)}, \dotsc, \lambda_{p_1^{-1}(d)}$ from left to right.  For each $\alpha \in \A$, join the $p_0(\alpha)$ subinterval of $I_+$ to the $p_1(\alpha)$ subinterval of $I_-$ by a band of uniform width $\lambda_\alpha$. The vertical flow along the bands from $I_+$ to $I_-$, followed by $\epsilon$ exhibits the classical interval exchange as a map from $I_+$ to $I_+$. Similarly, the inverse of the interval exchange is realized as a map from $I_-$ to itself by flowing reverse along the bands, followed by $\epsilon$. The ambiguity in the definition at the endpoints of the subintervals is removed by requiring the endpoint flow along the band that lies to the left.

One can construct a train track from this picture. A {\em train track} is a 1-dimensional CW complex with some additional structure. The edges are called {\em branches} and the vertices are called {\em switches}. There is a common point of tangency to all branches meeting at a switch. This splits the set of branches incident at a switch into two disjoint subsets, arbitrarily assigned as incoming and outgoing branches at that switch. Additionally, one assigns non-negative weights to the branches so that the {\em switch conditions} are satisfied: at each switch the sum of the weights of the outgoing branches is equal to the sum of the weights of the incoming branches. 

In our picture, each band has a central edge which joins the midpoint of the subinterval of $I_+$ to the midpoint of the corresponding subinterval of $I_-$. Retract each band to its central edge and the thickened interval to a vertical edge. It is clear that this can be done in a way such that the edges associated to the bands share a vertical line of tangency on each side of the vertical edge. Finally, retract the central vertical edge to a point while preserving the vertical tangency. The result is a train track with a single switch. The branches are in bijection with the bands and every outgoing branch on one side of the switch is incoming from the other. If we assign the width of the band as the weight on the corresponding branch then the weights satisfy the single switch condition.

The first step towards defining non-classical interval exchanges is to relax the constraint that every outgoing branch from one side of the switch is incoming from the other, i.e.~to allow bands from $I_+$ to $I_+$ and $I_-$ to $I_-$. We call such bands {\em orientation reversing} because the flow along such a band reverses the orientation of a subinterval of $I_+$ or $I_-$. 

A train track is {\em recurrent} if there is an assignment of weights satisfying the switch conditions, such that all weights are positive. Here, it implies that if there are orientation reversing bands on one side then there has to be at least one orientation reversing band on the other side. 

\begin{definition}\label{non-c}
A {\em non-classical interval exchange} is the pair of intervals $I_+$ and $I_-$ with bands such that there are orientation reversing bands on both $I_+$ and $I_-$ i.e., the underlying train track is recurrent. The transformation $T: I_+ \sqcup I_- \to I_+ \sqcup I_-$ defined by it is the following composition: Except for the endpoints of the subintervals, every $x \in I_+ \sqcup I_-$ lies in exactly one band. Flow $x$ along this band to its other end to get a point $x'$. Set $T(x)$ to be $\epsilon(x')$. 
\end{definition}

We will no longer distinguish between the picture and the transformation i.e., by the dynamics of a non-classical interval exchange, we will mean the dynamics of $T$. It is clear from the definition, that the Lebesgue measure on $I_+ \sqcup I_-$ is invariant under $T$. 

\subsection{Linear Involutions:}
To relate this to Definition 2.1 in \cite{Boi-Lan}, the map $\tilde{T}$ in their notation is exactly the map given by the flow along the bands. Our requirement that there are orientation reversing bands on $I_+$ and $I_-$ is equivalent in their definition to imposing that there are  subintervals of $I_+$ and $I_-$ that $\tilde{T}$ maps to $I_+$ and $I_-$ respectively. 

The labeling of the bands by $\A$ can be thought of as given by {\em a generalized permutation} as defined in \cite{Boi-Lan}.  A generalized permutation $\pi$ is a 2-1 map from $\{1, \dotsc , 2d\}$ to $\A$. Thus, $\pi^{-1} \alpha$ denotes the two ends of the band labelled $\alpha$. The permutation is of type $(l,m)$ where $l+m=2d$ if the set $\{1, \dotsc, l\}$ enumerates the subintervals of $I_+$ from left to right and the set $(l+1, \dotsc, l+m=2d\}$ enumerates the subintervals of $I_-$ from left to right. . A generalized permutation defines a fixed point free involution $\sigma$ of $\{1, \dotsc, 2d\}$ by:
\[
\pi^{-1}(\pi(i)) = \{ i , \sigma(i)\}
\]
Our definition implies that the generalized permutation we get does not arise from a true permutation $p= p_1p_0^{-1}$ i.e., there is a positive integer $i$ with $i,  \sigma(i) \leqslant l$ and a positive integer $j$ with $l+1 \leqslant j, \sigma(j)$. The equivalence classes under $\sigma$ are indexed by the elements of $\A$ and correspond to the bands. Following Kerckhoff, we shall call the positions that are rightmost on the intervals $I_+$ and $I_-$, the {\em critical} positions.

\begin{wrapfigure}{r}{0.45\textwidth}
\begin{center}
\ \epsfig{file=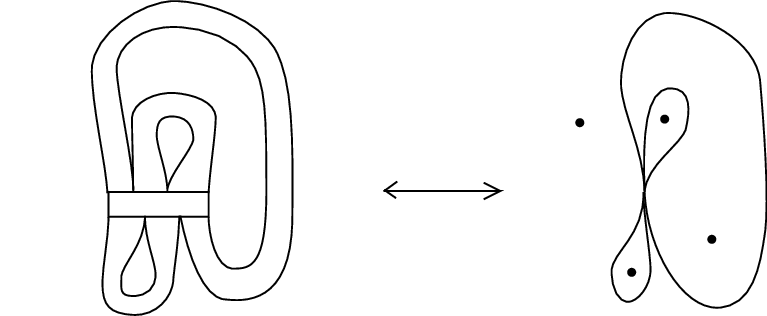, width=0.4\textwidth}
\end{center}
\vskip 5pt \caption{A non-classical interval exchange on $S^2$ minus 4 points.}
\label{iem}
\end{wrapfigure}

\subsection{Non-classical interval exchanges and strata of quadratic differentials:} 
A train track on an oriented surface $S$ with non-negative Euler characteristic, is {\em large} if every region complementary to it in $S$, is a polygon or a once-punctured polygon. Due to the tangency condition at the switches, the complementary regions are {\em ideal} in the sense that the internal angles at all their vertices are zero, and hence they are {\em cusps}. For example, in Figure~\ref{iem}, there are four complementary regions all of which are once-punctured monogons: each contain one puncture and have one cusp. See \cite{Pen-Har} or \cite{Mas-Min} for background on train tracks.

A large train track on $S$ is said to belong to a particular stratum of quadratic differentials if each ideal polygon or once-punctured ideal polygon contains respectively, exactly a single zero or pole, and the number of cusps of each region is the order of the zero or the pole. A non-classical interval exchange belongs to a particular stratum if the underlying train track embeds into $S$ such that the embedding belongs to the stratum. As an example, Figure 17 of \cite{Dun-DTh} shows a non-classical interval exchange in the principal stratum on a 5-punctured sphere, and Figure 19 of \cite{Dun-DTh} shows a non-classical interval exchange in the principal stratum of a genus-2 surface. 

\section{Rauzy induction}\label{Rauzy}

We now describe Rauzy induction. Since the underlying picture of intervals with bands is similar to classical interval exchanges, Rauzy induction for a non-classical interval exchange is similarly defined. The precise definition is given in Section 2.2 of \cite{Boi-Lan}. Here, we concentrate on encoding iterations by products of elementary matrices.

Iterations of Rauzy induction of a classical interval exchange give an expansion analogous to the continued fraction expansion. In fact, when $d=2$, the expansion is exactly the continued fraction expansion of the ratio of the widths of the two bands.

\begin{figure}[htb]
\begin{center}
\ \psfig{file=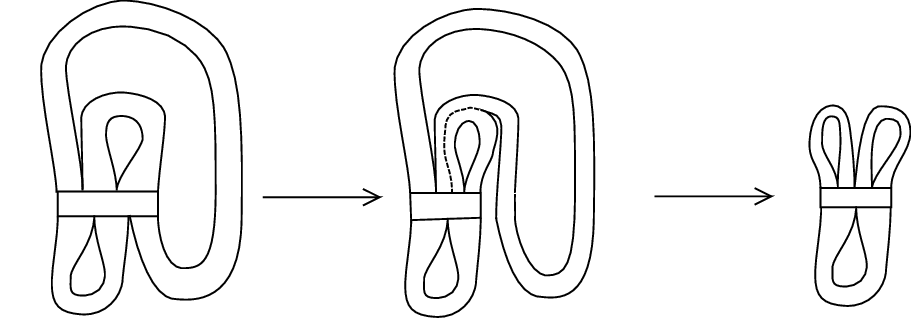, height=1.5truein, width=5truein} \caption{Rauzy Induction} \label{rauzyind}
\end{center}
\setlength{\unitlength}{1in}
\begin{picture}(0,0)(0,0)
\put(-1.85,1.7){$\alpha_0$} \put(-1.3,0.6){$\alpha_1$} \put(1.9,1.7){$\alpha_1$} \put(2.4,1.7){$\alpha_0$} \put(1.9,0.5){$\alpha'_1$} \put(-3,1.2){$\lambda_{\alpha_1} = 3/7$} \put(-3,1){$\lambda_{\alpha_0} = 1/7$} \put(2.5,1.2){$\lambda_{\alpha_1} = 2/7$} \put(2.5,1){$\lambda_{\alpha_0}= 1/7$}
\end{picture}
\end{figure}

Suppose $T$ is a non-classical interval exchange. Let $\alpha_0$ and $\alpha_1$ be the bands in the critical positions with $\alpha_0$ on $I_+$. First, suppose that $\lambda_{\alpha_0} > \lambda_{\alpha_1}$. Then we slice as shown in Figure~\ref{rauzyind} till we hit $I_+ \sqcup I_-$ for the first time. The $\alpha_0$ band remains in its critical position, but typically a band with a different label $\alpha'_1$ moves into the other critical position. Furthermore, the new width of $\alpha_0$ is $ \lambda_{\alpha_0} - \lambda_{\alpha_1}$. All other widths remain unchanged. If instead $\lambda_{\alpha_1} < \lambda_{\alpha_0}$, then we slice in the opposite direction, which in
Figure~\ref{rauzyind} would be the analogous operation after flipping the picture about the horizontal axis. In either case, we get new non-classical interval exchanges with combinatorics and widths as described above. The operation we just described is called {\em Rauzy induction}.  Since Rauzy induction is represented pictorially by one band being split by another, it's also called a {\it split}. This is consistent with the notion of a split in the context of train-tracks; if the interval exchange is thought of as a train track then Rauzy induction is the same as splitting the train track. Iterations of Rauzy induction are called {\it splitting sequences}.

For a classical interval exchange, Rauzy induction is the first return map to the interval $I' = [0,\sum_{\alpha \neq \alpha_0} \lambda_{\alpha})$ in the first instance and $I' = [0, \sum_{\alpha \neq \alpha_1} \lambda_{\alpha})$ in the second. Similarly, for non-classical interval exchanges, if $I'_+$ and $I'_-$ denote the copies of $I'$ in $I_+$ and $I_-$ respectively, then Rauzy induction is the first return map to $I'_+ \sqcup I'_-$, in either instance.  In this context, not all instances of Rauzy induction are defined. For example:
\begin{enumerate}
\item When $\alpha_0 = \alpha_1$ i.e., both ends of a single band are in the critical positions, neither of the splits are defined. 
\item When $\lambda_{\alpha_0} = \lambda_{\alpha_1}$ then neither of the splits is defined.
\item When $\alpha_0$ is an orientation reversing band on $I_+$ and $\alpha_1$ is the only orientation reversing band on $I_-$, then $\alpha_0$ can split $\alpha_1$ but not the other way round i.e., only one of the splits is defined.
\end{enumerate}
As we shall see later, case (1) is ruled out by the assumption that the non-classical interval exchange is irreducible and case (2) represents a set of measure zero. Eventually, we show in Section~\ref{dynamics}, that almost surely, all iterations of Rauzy induction are well defined. 

 \subsection{Encoding Rauzy expansions by matrices:}

\subsubsection{Description of the parameter space:}
All non-classical interval exchanges sharing the same generalized permutation $\pi$ are parameterized by the widths of the bands. Consider the vector space $\R^{\A}$ and let $\R_{\geq 0}^{\A}$ be the set of points with non-negative coordinates. Let $\Delta$ denote the standard $(d-1)$-simplex in $\R^{\A}$ given by the constrain the sum of the coordinates is 1. An assignment of widths to the bands is a point in $\R^{\A}$. Normalizing the widths so that their sum is 1 restricts us to $\Delta$.  

To be consistent with $\pi$, any assignment of normalized widths must satisfy the switch condition defined by $\pi$. We denote the set of such widths by $W$. Let $\A_+$ and $\A_-$ be the set of orientation reversing bands in $\A$ that are incident on $I_+$ and $I_-$ respectively. Then the points in $W$ satisfy the additional constraint: 
\[
\sum_{\alpha \in \A_+} \lambda_{\alpha}  = \sum_{\alpha \in \A_-} \lambda_{\alpha}
\]
Thus $W$ is the intersection with $\Delta$ of a codimension 1 subspace of $\R^\A$.  For $\alpha \in \A_+$ and $\beta \in \A_-$, let $e_{\alpha \beta}$ be the midpoint of the edge $[e_\alpha, e_\beta]$ of $\Delta$ joining the vertices $e_{\alpha}$ and $e_{\beta}$. The subset $W$ is the convex hull of the points $e_{\alpha \beta}$ and $e_{\rho}$ for $\rho \notin \A_+ \cup \A_-$.

There are finitely many generalized permutations of an alphabet $\A$ over $d$ letters, and hence finitely many convex codimension 1 subsets of $\Delta$ that could be $W$. We call the subsets of $\Delta$ that the generalized permutations define {\em configuration spaces}. Whenever it is necessary to keep track, we index in a fixed manner, the configuration spaces as $W_r$. The full parameter space for non-classical interval exchanges with $d$ bands is a disjoint union of the configuration spaces $W_r$. 

\subsubsection{Matrices:}
Let $I$ denote the $d \times d$ identity matrix on $\R^\A$. For $\alpha, \beta \in \A$, let $M_{\alpha \beta}$ be the $d \times d$-matrix with the $(\alpha,\beta)$ entry 1 and all other entries 0. After Rauzy induction, the relationship between the old and new width data is expressed by
\[
\lambda = E \lambda'
\]
where the matrix $E$ has the form $E = I + M$. In the first instance of the split, when $\lambda_{\alpha_0} > \lambda_{\alpha_1}$, the matrix $M = M_{\alpha_0\alpha_1}$; in the second instance of the split, when $\lambda_{\alpha_1} > \lambda_{\alpha_0}$, the matrix $M = M_{\alpha_1 \alpha_0}$. Thus, in either case the matrix $E$ is an elementary matrix, in particular $E \in SL(d; \Z)$. If $B$ is any $d \cross d$ matrix then in the instance when $\lambda_{\alpha_0} > \lambda_{\alpha_1}$, the action on $B$ by right multiplication by $E$ has the effect that the
$\alpha_1$-th column of $B$ is replaced by the sum of the $\alpha_0$-th column and $\alpha_1$-th column of $B$. We phrase this as: in the split, the $\alpha_0$-th column {\em moves} $\alpha_1$-th column. Similar statement holds when $\lambda_{\alpha_0} > \lambda_{\alpha_1}$.

\section{Rauzy diagram}\label{Rauzy-diagram}

For non-classical interval exchanges, one constructs an oriented graph similar to the Rauzy diagram for a classical interval exchange. However, as we shall see there are some key differences in the non-classical context.

Construct an oriented graph $G$ as follows: the nodes of the graph are generalized permutations $\pi$ of an alphabet $\A$ over $d$ letters satisfying the conditions imposed by Definition~\ref{non-c} and summarized in Section 2.2. We draw an arrow from $\pi$ to $\pi'$, if $\pi'$ results from splitting $\pi$. For each node $\pi$, there are at most two arrows coming out of it. A splitting sequence gives us a directed path in $G$.

In the context of classical interval exchanges, irreducibility of the permutation is sufficient to ensure that each connected component of the Rauzy diagram is an {\em attractor} i.e., any node can be joined to any other node by a directed path. Each component is called a Rauzy class. Veech showed a bijective correspondence between extended Rauzy classes and connected components of the corresponding strata of abelian differentials \cite{Vee}, and Kontsevich and Zorich gave a classification scheme for them \cite{Kon-Zor}. 

The Rauzy diagram for non-classical interval exchanges is more complicated and need not have such strong recurrence properties. See the examples in the Appendix of \cite{Boi-Lan} or see Section 10 of \cite{Dun-DTh}. The notion of strong irreducibility of generalized permutations defined by Boissy-Lanneau \cite{Boi-Lan} is needed to characterize attractors. We will explain these issues and their relevance to Theorem~\ref{Uniform-distortion} in the next section.

\section{Irreducibility}\label{irreducibility}

\noindent Let $\vert A \vert$ denote the Lebesgue measure of a measurable subset $A \subset I_+ \sqcup I_-$. We shall first consider the simplest notion of reducibility:

\begin{definition}\label{comb-reducible}
A generalized permutation $\pi$ is {\em combinatorially reducible} if $\A$ can be written as a disjoint union $\A_1 \sqcup \A_2$ of nonempty subsets such that for all $\alpha \in \A_1$ and $\beta \in \A_2$, the ends $\pi^{-1}(\alpha)$ of band $\alpha$ occur to the left of the ends $\pi^{-1}(\beta)$ of the band $\beta$.
\end{definition}

This means that for {\em some} choice of widths, the intervals $I_\pm$ can be cut into two intervals $I_\pm (1) \sqcup I_\pm (2)$ such that $\vert I_+ (1) \vert = \vert I_- (1) \vert$ and $\vert I_+(2) \vert = \vert I_-(2) \vert$ and all the $\A_1$ bands are incident on $I_\pm(1)$ and all the $\A_2$ bands are incident on $I_\pm(2)$. Thus, $I_\pm(1)$ and $I_\pm(2)$ define interval exchanges by themselves with $\# \A_1$ and $\# \A_2$ bands respectively i.e., for some choice of widths, the original exchange is obtained by concatenating two disjoint exchanges with fewer bands. In particular, the dynamics of the original exchange is non-minimal.

\begin{definition}\label{measure-reducible}
A generalized permutation $\pi$ is said to be {\em measure reduced} if for all widths of the bands, the non-classical interval exchange decomposes into disjoint exchanges given by the $\A_1$ and $\A_2$ bands respectively. 
\end{definition}

A generalized permutation $\pi$ is {\em irreducible} if it has no measure reduction. 

\begin{lemma}
A generalized permutation $\pi$, combinatorially reducible as $\A= \A_1 \sqcup \A_2$, is measure reduced if and only if all the orientation reversing bands in $\pi$ are entirely in $\A_1$ or entirely in $\A_2$.
\end{lemma}

\begin{proof}
Let $\A_{1,-}$ and $\A_{1,+}$ denote the set of orientation reversing bands in $\A_1$ that are incident on the bottom and top respectively. The generalized permutation $\pi$ is measure reduced if and only if the additional constraint
\begin{equation}\label{red-constraint}
\sum_{\alpha \in \A_{1,-}} \lambda_\alpha = \sum_{\beta \in \A_{1,+}} \lambda_\beta
\end{equation}
gives the defining equation of $W$ or is vacuous. If the former, then all the orientation reversing bands of $\pi$ have to be in $\A_1$. If the latter, then all the orientation reversing bands of $\pi$ have to be in $\A_2$.
\end{proof}

A train track on a surface $S$ with non-negative Euler characteristic is {\em complete} if all its complementary regions are ideal triangles or once punctured monogons. This means that a generic measured foliation carried by it is the vertical foliation of a quadratic differential in the principal stratum i.e., have simple zeroes and poles. We shall call a non-classical interval exchange {\em complete} if the underlying train track embeds as a complete train track on some surface $S$. Technically, the train track is required to be transversely recurrent in addition to being recurrent but transverse recurrence of non-classical interval exchanges follows by a direct application of Corollary 1.3.5 in \cite{Pen-Har}. So, we skip the definition of transverse recurrence and refer the reader to \cite{Pen-Har}. In the proposition below, we show that in the principal stratum, combinatorial reducibility is never measure reducibility. 

\begin{proposition}\label{comb-meas}
A generalized permutation $\pi$ of a complete non-classical interval exchange is irreducible. 
\end{proposition}
\begin{proof}
Suppose that $\pi$ admits a measure reduction $\A = \A_1 \sqcup \A_2$. By the previous lemma, all the orientation reversing bands in $\pi$ must belong entirely say to $\A_2$. Mark the points $p_\pm$ on $I_\pm$ that are the common endpoints of the intervals $I_\pm(1)$ and $I_\pm(2)$. Suppose $\alpha$ and $\beta$ are the bands incident on $I_+(1)$ and $I_+(2)$ that are adjacent at the common endpoint $p_+$. Let $\mathcal{R}$ be the complementary region with a cusp at $p_+$ and $\alpha$ and $\beta$ as sides. For $\mathcal{R}$ to be a once-punctured monogon, $\alpha$ has to be the same as $\beta$ which is not possible since $\pi$ is measure reduced. So, $\mathcal{R}$ has to be an ideal triangle. Then, the remaining two cusps of $\mathcal{R}$ have to be incident on $I_-(1)$ and $I_\pm(2)$ respectively. But this implies that there is a band whose one end is incident on $I_-(1)$ and its other end is incident on $I_\pm(2)$. This contradicts the fact that $\pi$ is measure reduced. 
\end{proof}

\subsection{Strong irreducibility and attractors:} \label{StrongIrr}
In contrast to Proposition~\ref{comb-meas}, in some of the other strata, there are combinatorially reducible non-classical interval exchanges that are also measure reduced. This can give rise to the following phenomena: The Rauzy diagram may contain generalized permutations that are not combinatorially reducible but which split with positive probability to measure reduced ones. In fact in some cases, the dynamics of the exchange is minimal for a set of widths of intermediate measure, and non-minimal because of a measure reduction for the complementary widths. See Figure 15 in \cite{Boi-Lan} for an example. This makes the attractors of the Rauzy diagram are harder to characterize.
 
In \cite{Boi-Lan}, Boissy and Lanneau define a stronger notion of reducibility and prove:
\begin{theorem}[\cite{Boi-Lan} Theorem C ]\label{attractors}
Let $G_{irr}$ be the subset of nodes of the Rauzy diagram $G$ corresponding to the strongly irreducible generalized permutations. Then $G_{irr}$ is closed under forward iterations of Rauzy induction. Moreover, each connected component of $G_{irr}$ is strongly connected i.e., any node $\pi$ in a connected component of $G_{irr}$ can be connected to any other node $\pi'$ in the same component of $G_{irr}$ by a sequence of splits.
\end{theorem}

The theorem above implies that a strongly irreducible non-classical interval exchange can never split to a measure reduced one. Thus, by restricting to strongly irreducible non-classical interval exchanges, one can avoid the issues mentioned.

From the point of view of Uniform Distortion i.e., Theorem~\ref{Uniform-distortion}, these issues become relevant to Propositions~\ref{inv-measures} and~\ref{critical}. For Proposition~\ref{inv-measures}, minimality of the non-classical interval exchange is necessary. For Proposition~\ref{critical}, it is necessary to know that the set of widths for which a combinatorial reduction decomposes a non-classical interval exchange into two exchanges with fewer bands, has measure zero i.e, it is necessary to know that the expansion never gives a measure reduced generalized permutation. As shown in \cite{Boi-Lan}, strong irreducibility implies both these properties. 

For us, it is enough to assume that the initial generalized permutation $\pi_0$ satisfies the following possibly weaker properties: first, the set of widths that define a minimal non-classical interval exchange has full measure, and second, the generalized permutation $\pi_0$ never splits to a measure reduced one. We do not use the actual definition of strong irreducibility at any point in the argument. Hence, we shall skip the details of the definition, and refer the reader to \cite{Boi-Lan}. It should be pointed out that in the principal stratum, all generalized permutations split in finite time to strongly irreducible ones. So,  in the principal stratum strong irreducibility is not required to prove Theorem~\ref{Uniform-distortion}. 

Similarly for Theorem~\ref{normality}, it is enough to assume that the initial generalized permutation $\pi_0$ is such that all splitting sequences starting from $\pi_0$ end up in some attractor. 

The assumption $\pi_0$ is strongly irreducible implies all the required hypothesis. So, for the rest of the paper, we will assume that the initial generalized permutation $\pi_0$ is strongly irreducible, in which case there are no transient nodes at all.

\section{Dynamics}\label{dynamics}

In this section, we analyze the expansion by splitting sequences on the space of widths of the bands. It turns out that the space of invariant probability measures embeds into the space of widths and the iterative refinement process defined by the splitting sequences, in the limit, nests down to the set of invariant probability measures. So the basic idea, due to Kerckhoff, is to show that for almost every non-classical interval exchange, one nests down to a single point.

\subsubsection{Preliminary notation:}
Given a matrix $A$ with non-negative entries, we define the projectivization $\Gamma A$ as a map from $\Delta$ to itself by
\[
\Gamma A(\y) = \frac{A \y}{\vert A \y \vert}
\]
where if $\y = (y_1, y_2, \cdots, y_d)$ in coordinates then $\vert \y \vert = \sum \vert y_i \vert$. This shall be the norm used throughout. The norm is additive on $\R_{\geq 0}^{d}$, i.e.~for $\y, \y'$ in $\R_{\geq 0}^{d}$, $\vert \y + \y'\vert = \vert \y \vert + \vert \y' \vert$.

\subsection{Iterations of Rauzy induction:}
We fix a generalized permutation $\pi_0$ that is strongly irreducible, and hence belongs to an attractor of $G$. Non-classical interval exchanges with generalized permutation $\pi_0$ are points in the configuration space $W_0$ defined by $\pi_0$. Let $\x = (\lambda_\alpha)$ be a point in $W_0$.  First, we show that almost every $\x \in W_0$ has an infinite expansion.

Recall from Section 3 that Rauzy induction stops in Cases (1) and (2). if Case (1) is true then the underlying generalized permutation is measure reduced, which rules out Case (1). In Case (2), except when the critical bands $\alpha_0$ and $\alpha_1$ are the only orientation reversing bands on $I_+$ and $I_-$, the set of widths satisfying $\lambda_{\alpha_0} = \lambda_{\alpha_1}$ form a codimension 1 subset of the associated configuration space. If $\alpha_0$ and $\alpha_1$ are the only orientation reversing bands on $I_+$ and $I_-$ then we can simply amalgamate together the ends of $\alpha_0$ and $\alpha_1$ in the critical positions into a single band and cut those parts out of the intervals $I_\pm$. This makes it equivalent to a classical interval exchange with $(d-1)$ bands for all widths contradicting the assumption that the generic vertical foliation carried, is non-orientable. The conclusion is that when the expansion stops in finite time, the widths belong to a codimension 1 subset of the associated configuration space. 

Finally the subset of points whose Rauzy expansion stops in a finite number of steps is a countable of union of codimension 1 sets, and hence measure zero. So, for almost every $\x$ iterated splitting gives an infinite expansion. An infinite expansion determines an infinite directed path in the attractor. 

A finite directed path $\pi_0 \to \pi_1 \to \dotsc \to \pi_n$ in the attractor shall be called a {\em stage}. Let $E_i$ denote the elementary matrix associated to the split $\pi_{i-1} \to \pi_{i}$. Let $W_i$ denote the configuration spaces corresponding to the $\pi_i$. The projective linear maps $\Gamma E_i$ have the property that the inverse image of the configuration space $W_{i-1}$ is the configuration space $W_i$. The  matrix $Q_n$ associated to the stage is given by the product
\[
Q_n = E_1 E_2 \dotsc E_n
\]
The image of $\Gamma Q_n$ in $W_0$ i.e., the set $\Gamma Q_n(W_n)$ is the set of all $\x \in W_0$ whose expansion begins with this finite sequence $\pi_0 \to \pi_1 \to \dotsc \to \pi_n$.

For a point $\x$ with an infinite expansion, whenever it is necessary to emphasize the dependence of the directed path on the initial point $\x$, we shall denote the nodes in the directed path by $\pi_{\x,i}$, the configuration spaces defined by $\pi_{\x, i}$ by $W_{\x,i}$, and the elementary matrices associated to the splits $\pi_{\x,i-1} \to \pi_{\x,i}$ by $E_{\x,i}$. Thus, given a stage $\pi_0 \to \pi_1 \to \dotsc \to \pi_n$, the set $\Gamma Q_n(W_n)$ is precisely the set of all $\x \in W_0$ for which $\pi_{\x,i} = \pi_i$ for all $i \leq n$. 

In the expansion for $\x$, the actual (or un-normalized) widths $\lambda^{(n)}$ at any stage are related by the equation
\[
\x = Q_n \lambda^{(n)}
\]
The projectivization $\x^{(n)} = \lambda^{(n)}/ \vert \lambda^{(n)} \vert$ lies in the configuration space $W_n$. Thus, we get a sequence of points $\x^{(n)} \in W_n$ such that $\x = \Gamma Q_{\x,n}  \x^{(n)}$. The sets $\Gamma Q_{\x,n}(W_n)$ form a nested sequence in $W_0$, all containing $\x$. Let 
\[
C(\x) = \bigcap_{n} \Gamma Q_{\x,n}(W_n)
\]
Let $\mu$ be a probability measure on the disjoint union $I_+ \sqcup I_-$ invariant under the non-classical interval exchange $T$ defined by $\x = (\lambda_\alpha)$. Let $(\lambda^\mu_\alpha)$ be the widths assigned by $\mu$ to the bands. If $T$ is minimal, then $\mu$ is absolutely continuous with respect to $l$, where $l$ is the standard Lebesgue measure on $I_+ \sqcup I_-$. Let $t \in I_+ \sqcup I_-$ and consider the subinterval $[0,t)$ in the same component $I_+$ or $I_-$ as $t$. Set $H_\mu(t) = \mu([0,t))$ to be in the same component as $t$. The map $H_\mu$ is a homeomorphism of $I_+ \sqcup I_-$. Define $T_\mu = H_\mu \circ T \circ H^{-1}_\mu$. It is easy to see that the transformation $T_\mu$ is a non-classical interval exchange with generalized permutation $\pi_0$. Following the exact argument as the first proposition of Section 4.4 of \cite{Yoc}, we get

\begin{proposition}\label{inv-measures}
The map $\mu \to (\lambda^\mu_\alpha)$ is a linear homeomorphism from the set of $T$-invariant probability measures onto the set $C(\x)$. In particular, the non-classical interval exchange $T$ is uniquely ergodic if and only if $C(\x) = \x$. 
\end{proposition}

Subsequently, we will be interested in estimating the Lebesgue measure of subsets of $\Gamma Q_n(W_n)$. Here, Lebesgue measure means the probability measure on $W_0$ given by the $(d-2)$-volume form induced on it as a sub-manifold of $\Delta$, normalized so that the total volume of $W_0$ is 1. We shall denote it by $\ell$. 

For example, to get an estimate of $\ell(\Gamma Q_n(W_n))$, we first push-forward, by $\Gamma Q_n$, the volume form on $W_n$. Since there are finitely many configuration spaces, the volumes of any two configuration spaces $W_r$ and $W_s$ differ up to some factor that depends only on $d$. So now compare the actual measure on $W_0$ to the push-forward. The Radon-Nikodym derivative of the actual measure with respect to the push-forward is just the Jacobian of $\Gamma Q_n$, restricted as a map from $W_n$ to $W_0$. So integrating the Jacobian over $W_n$ gives us
$\ell(\Gamma Q_n(W_n))$ up to the factor that relates the volumes of the two configuration spaces. This shows that to give quantitative estimates, one needs to understand the Jacobian of $\Gamma Q_n$ restricted as a map from $W_n$ to $W_0$. We denote this Jacobian by $\J(\Gamma Q_n)$.

Suppose $\pi_n$ is the same as $\pi_0$ at some stage in the expansion and suppose $\jmath$ is a finite splitting sequence starting from $\pi_0$. If the Jacobian $\J(\Gamma Q_n)$ is roughly the same at all points, then the relative probability that $\jmath$ follows $\pi_n$ is also roughly the same as the probability that an expansion starts with $\jmath$. We make the notion of the Jacobian being roughly the same at all points, precise below.

\begin{definition}\label{C-distortion}
Suppose $\pi_0 \to \pi_1 \to \dotsc \to \pi_n$ is a finite directed path in the attractor and $Q_n$ the associated matrix. For $C > 1$, we say that the stage $\pi_n$ is {\em $C$-uniformly distorted} if for all $\y, \y' \in W_n$
\[
\frac{1}{C} \leq \frac{\J(\Gamma Q_n)(\y)}{\J(\Gamma Q_n)(\y')} \leq C
\]
\end{definition}

At this point, we adopt some conventions: At any stage, points in $W_0$ or more generally in its ambient simplex shall be denoted by $\x$'s, points in $W_n$ or more generally in its ambient simplex by $\y$'s, points in $Q_n W_n$ or more generally in the image under $Q_n$ of the ambient simplex by $\w$'s. In all cases, we use suitable subscripts whenever necessary. From the previous discussion $\y = \x^{(n)}$, where $\x^{(n)}$ is the $n$-th point constructed iteratively in the expansion of $\x$. The main technical theorem is

\begin{theorem}\label{Uniform-distortion}
Suppose $\pi_0 \to \pi_1 \to \dotsc \to \pi_n$ is a stage in the expansion. There exists a constant $C>1$, independent of the stage, such that for almost every $\x \in \Gamma Q_n(W_n)$, there is some $m>n$, depending on $\x$, such that the stage $\pi_{\x,m}$ is $C$-uniformly distorted.
\end{theorem}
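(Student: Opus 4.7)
The plan is to study the restricted Jacobian $\J(JQ_m)$ on $W_{r_m}$ directly from the formula that will be developed in Section~\ref{Jacobian}, and to reduce the theorem to the construction of a universal ``balancing'' splitting sequence starting and ending at $\pi_0$. First I would record the structural form of $\J(JQ_m)$. Writing $Q_m$ column-wise as $[c_1 \mid \dotsb \mid c_d]$, one has $Q_m\y = \sum_\alpha y_\alpha c_\alpha$ and $|Q_m\y| = \sum_\alpha y_\alpha |c_\alpha|$, so on the ambient simplex the Jacobian of $JQ_m$ is $1/|Q_m\y|^d$ up to a sign (since $\det Q_m = \pm 1$). The restriction of $JQ_m$ from $W_{r_m}$ to $W_{r_0}$ contributes an additional factor, computed in Section~\ref{Jacobian}, that involves the subsums $\sum_{\alpha \in S_t} y_\alpha |c_\alpha|$ and $\sum_{\alpha \in S_b} y_\alpha |c_\alpha|$; this is the extra ingredient flagged in Section~\ref{example}. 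In any case, the ratio $\J(JQ_m)(\y)/\J(JQ_m)(\y')$ is controlled by $\max_\alpha |c_\alpha|/\min_\alpha |c_\alpha|$ on the labels actually charged by $W_{r_m}$, together with the analogous ratio of the top- and bottom-reversing subsums. Thus the theorem reduces to showing that almost every expansion eventually reaches a stage $\pi_{\x,m}$ at which these column-sum ratios are bounded by a universal constant.

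Next I would exhibit, once and for all, a finite splitting sequence $\Gamma$ in the sink that starts and ends at $\pi_0$ and which, when its elementary matrix $E_\Gamma$ is appended to any matrix $Q$ arising from a prior stage landing at $\pi_0$, produces a product $Q E_\Gamma$ whose columns are uniformly balanced in the sense above. The construction uses strong connectivity of the sink containing $\pi_0$: one chooses $\Gamma$ so that every label in $S$ appears in the critical position sufficiently many times. Since each split replaces one column of $Q$ by the sum of two of its columns, after the action of $E_\Gamma$ every column of $Q E_\Gamma$ is a positive integer combination of the original columns in which each original column appears at least once; a careful bookkeeping, to be carried out in Section~\ref{proof}, then forces each new column sum $|c_\alpha'|$ to lie between fixed multiples of $\sum_\beta |c_\beta|$, with analogous estimates for the $S_t$- and $S_b$-subsums. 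This is the analog of Kerckhoff's balancing sequence for classical i.e.m.'s, adapted to the codimension one setting.

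Finally I would argue that $\Gamma$ occurs almost surely after stage $n$. Since $\pi_0$ lies in a sink and the sink is strongly connected, a.e.\ expansion returns to the combinatorial type $\pi_0$ infinitely often. At each such return, the conditional probability that the next $|\Gamma|$ splits follow $\Gamma$ is bounded below by a universal constant $p_0 > 0$; this lower bound comes from the push-forward measure estimates in Section~\ref{measure-theory}, which show that the Lebesgue density transported by $JQ$ to the current configuration space still assigns mass at least $p_0$ to the set of points whose next $|\Gamma|$ splits agree with $\Gamma$. A conditional Borel--Cantelli argument along the sequence of return times to $\pi_0$ then forces $\Gamma$ to occur at some post-$n$ return almost surely. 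The resulting stage $\pi_{\x,m}$ is combinatorially $\pi_0$, in agreement with the ``moreover'' clause of Theorem~\ref{uniform-distortion}, and is $C$-uniformly distorted for a $C$ depending only on the balancing constants of $\Gamma$ and on the finitely many configuration-space geometries.

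The principal obstacle is constructing $\Gamma$ and verifying the column-sum balance after \emph{restriction} to $W_{r_m}$. For classical i.e.m.'s the full-simplex Jacobian $1/|Q_m\y|^d$ is controlled by the single ratio $\max_\alpha |c_\alpha|/\min_\alpha |c_\alpha|$; in the non-classical setting, Section~\ref{example} shows that ratios of volumes can behave very differently on the codimension one slice, and the restricted Jacobian from Section~\ref{Jacobian} mixes separate contributions from the top and bottom reversing bands. Consequently $\Gamma$ must be designed to balance not just total column sums but also the top and bottom subsums simultaneously, and I expect this simultaneous balancing to be the most delicate step of the argument.
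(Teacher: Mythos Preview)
Your plan has a genuine circularity at the third step. You assert that at every return to $\pi_0$ the conditional probability of following the fixed balancing sequence $\Gamma$ is bounded below by a universal $p_0>0$, and you appeal to Section~\ref{measure-theory} for this. But the estimates in Section~\ref{measure-theory} are for the \emph{full} simplex; the point of Section~\ref{example} is precisely that on the codimension-one configuration space the relative measure of a split is \emph{not} governed by the ambient-simplex ratio. In particular, the probability that a given band $\gamma$ splits $\sigma$ at a stage with matrix $Q$ depends on how $|Q(\gamma)|$ compares to the other column norms, not just on the combinatorial type. The only lower bound available (Lemma~\ref{split-estimate}) requires $|Q(\gamma)| > C^{-1}\max_\alpha |Q(\alpha)|$, i.e.\ that the splitting column is already within a bounded factor of the largest. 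At an \emph{arbitrary} return to $\pi_0$ the columns may be wildly unbalanced, and some split prescribed by $\Gamma$ may ask a column of tiny norm to win against a huge one; there is no mechanism giving that split a universal positive probability. So to get your $p_0$ you would already need a form of column balance at the return---which is exactly the conclusion you are trying to establish.

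This is why the paper does \emph{not} use a single universal $\Gamma$. Instead it runs Kerckhoff's inductive scheme (Proposition~\ref{inductive-step}): begin with the singleton $\A_0$ consisting of the largest column, and with definite probability enlarge it to a $C_1$-distributed set $\A\supsetneq\A_0$ still containing the current largest column. The probability estimates used in that step (Proposition~\ref{measure-norm} and Lemma~\ref{split-estimate}) only ever concern bands whose columns are already known to be within a bounded factor of the maximum, so the hypothesis of Lemma~\ref{split-estimate} is legitimately available. Iterating $d-1$ times yields Proposition~\ref{equidis}, and a geometric-series argument then gives the theorem. Your balancing-sequence idea would shortcut all of this \emph{if} the uniform $p_0$ existed, but it does not in the non-classical setting.

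A smaller point: the restricted Jacobian computed in Section~\ref{Jacobian} is $\J(JQ)(\y)=1/(a\,|Q\y|^{d-1})$ with $a$ a constant depending only on the stage, not on $\y$. There are no separate ``top-subsum'' and ``bottom-subsum'' factors in the $\y$-dependence; the subtlety you flag is absorbed into the single scalar $a$, which cancels in any ratio $\J(JQ)(\y)/\J(JQ)(\y')$. So $C$-uniform distortion really is equivalent to $C^{1/(d-1)}$-distribution of the vertices of $W(Q)$, and hence is implied by $C$-distribution of the columns of $Q$. This simplifies your first paragraph but does not rescue the third.
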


\section{Kerckhoff's approach: Some examples}\label{example}

We fix the following notation: Let $Q$ be the matrix associated to a stage in the expansion. We denote the $\alpha$-th column of $Q$ by $Q(\alpha)$.

Before we present examples, we note that for classical interval exchanges Equation~\eqref{full-jaco} implies that $C$-uniform distortion is equivalent to the $C^{1/d}$-distribution of the columns of the associated matrix i.e., the ratio of the norms of any two columns of $Q$ must be in the interval $(1/C^{1/d}, C^{1/d})$.

As the first example, consider an irreducible classical interval exchange with two bands. There is just one possible combinatorial type and hence the Rauzy diagram has just a single vertex. At the starting stage, we shall label the band in the critical position on the bottom as 1 and the other band as 2. At every stage, we normalize so that the sum of the widths of the bands is one. If at any stage, band 1 splits band 2, then we denote the split by the symbol $L$ and if it happens the other way round, then we denote it by $R$. The matrices corresponding to $L$ and $R$ which we also denote by the same letters are:
\[
L = \left[ \begin{array}{cc} 1 & 1\\0 & 1 \end{array} \right], \hspace{10mm} R = \left[ \begin{array}{cc} 1& 0 \\ 1 & 1\end{array} \right]
\]
Suppose $Q$ is the matrix at some stage in the expansion and suppose that $\vert Q(2) \vert > \vert Q(1) \vert$. Subsequent to this stage, as long as the split $L$ keeps occurring, the column $Q(1)$  keeps moving column $Q(2)$. Hence the new matrix $Q'$ has columns
\[
Q'(2) = Q(2) + k Q(1), \hspace{5mm} Q'(1) = Q(1)
\]
where $k$ is the number of times $L$ has occurred. In this case, the norm of the second column keeps increasing while the first column stays the same, making the inequality $\vert Q'(2) \vert > \vert Q'(1) \vert$ more and more pronounced. However, as soon as $R$ occurs, the second column moves the first, and by additivity of the norm, we get 
\[
\frac{1}{2} < \frac{\vert Q'(1) \vert}{\vert Q'(2) \vert} < 2
\]
i.e., the columns become 2-distributed. To summarize, as long as a sequence of $L$'s occur, the columns get farther and farther from being nicely distributed and the resulting stage is farther and farther from being uniformly distorted. But as soon as a $R$ occurs after that, the columns get 2-distributed, resulting in a stage that is uniformly distorted.

Now using the measure theory of projective linear maps from a 1-simplex to itself, specifically Equation~\eqref{wedge-measure}, it can be shown that the probability of the second column increasing in norm by a factor of $K$ due to a sequence of $L$'s, is bounded away from 1 by a quantity that depends only on $K$ and is independent of the stage. This implies that with a definite probability, the split $R$ must occur, giving us 2-distribution and an instance of the theorem.

To handle classical interval exchanges with $d$ bands, Kerckhoff \cite{Ker} first proves a similar proposition about {\it increase in norm}: At any stage, the probability that a band is never split, before the norm of its column increases by a large enough factor $K$, is bounded away from 1. The bound depends only on $K$ and $d$ and is independent of the stage. In fact, the bound monotonically goes to zero as $K$ goes to infinity. As in the example above, the proof of this proposition uses Equation~\eqref{wedge-measure}.

After this, Kerckhoff proves the following inductive step: Suppose our stage has a collection of $C'$-distributed columns that also includes the column with the largest norm. Then there is a definite probability that one gets a larger collection of $C''$-distributed columns that also includes the largest column at the new stage. The constant $C''$ depends only on $C'$ and $d$ and is independent of the stage. Iterating the inductive step shows that with a definite probability, one must get $C$-distributed.

The basic idea behind the inductive step is: Before the norms of the columns in the collection increase by a factor of $K$, suppose one of the following happens: either an outside column becomes the column with the largest norm or a column in the collection moves a column from outside. At this point, if we add the outside band to our collection, then similar to the example above, the ratio of the norms of any two columns in it is within suitable bounds, even if initially, the ratio is way out. So it remains to show that with a definite probability, one of the two
events happens.

The key idea is that this happens provided there are no isolated blocks. An {\em isolated block} is a splitting sequence in which there is a collection of bands that satisfy the properties: First, every band in the collection is moved, at least once, by some other band in the collection and second, every band in the collection moves some other band in the collection but never moves a band outside the collection. Kerckhoff shows that for irreducible classical interval exchanges, isolated blocks are absent.

The proposition about increase in norm implies that with a definite probability, every band in the collection has to move some other band before its norm increases by $K$. But it could so happen that it moves a band in the collection itself. If this repeats enough number of times, then Kerckhoff shows that there is a sub-collection that forms an isolated block. The number of times it needs to repeat is independent of the stage. Finally, since isolated blocks are absent, there is a definite probability that a band in the collection has to move a band outside, which is exactly the
kind of split we want to finish the proof of the inductive step.

With non-classical interval exchanges, there are two issues: First, the probabilities of splits are different from the ratios of the measures of the ambient simplices. Because of this, for non-classical interval exchanges, the proposition about increase in norm is incorrect as it stands. Second, isolated blocks are possible. 

To illustrate the first issue, consider the stage given by the splitting sequence in Figure~\ref{seqn}. Let $\Delta$ denote the standard simplex in $\R^4$, and let $Q$ denote the matrix associated to the stage. Now suppose that following this stage, band 1 splits band 2. Let $E$ denote the elementary matrix associated to this split. In the subsequent computation, we show that the probability that this split happens is $(n+3)/2(n+2)$, which approaches $1/2$ as $n$ becomes large. On the other hand, when band 1 splits band 2, the column $Q(2)$ moves the column $Q(1)$. The computation shows that this increases $\vert Q(1) \vert$ by a factor of $2(n+2)/3$, which is unbounded as $n$ becomes large. It is also interesting to compare the probability of the split to the ratio $\ell(T(QE)(\Delta))/\ell(\Gamma Q(\Delta))$ of the volumes of the ambient simplices. By Lemma~\ref{measure-plm}, the ratio $\ell(T(QE)(\Delta))/\ell(\Gamma Q(\Delta))$ is equal to the reciprocal of the factor by which $\vert Q(1) \vert$ increases, i.e. equal to $3/2(n+2)$. This is completely different from the probability of the split. The example thus illustrates that for non-classical interval exchanges Proposition 1.1 from Kerckhoff \cite{Ker} cannot hold as it stands. 

However, a closer look at the proof of the inductive step reveals that it suffices that the proposition about increase in norm be true for the {\em largest} columns around. In Proposition~\ref{measure-norm}, we show that this is indeed the case for non-classical interval exchanges.

\begin{figure}[htb]
\begin{center}
\ \psfig{file=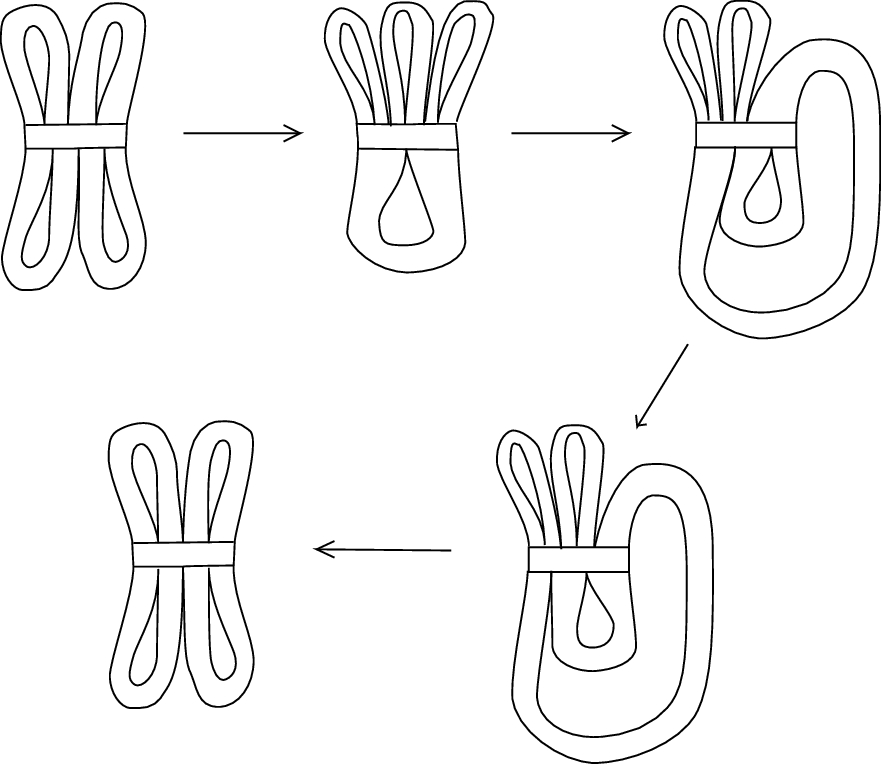, width=5truein, height=2.5truein} \caption{Splitting Sequence} \label{seqn}
\end{center}
\setlength{\unitlength}{1in}
\begin{picture}(0,0)(0,0)
\put(-1.45,2.75){1 splits 3}\put(-1.3,2.5){twice} \put(0.4,2.75){3 splits 2} \put(1.5,1.8){2 splits 3,} \put(1.55,1.65){n times} \put(-0.55,1.4){3 splits 2} \put(-2.5,3.1){4}\put(-1.7,3.1){3}\put(-0.7,3.1){4}\put(-0.2,3.1){1}\put(0.3,3.1){3}\put(1.3,3.1){4}
\put(1.7,3.1){1}\put(-2.5,2){2}\put(-1.7,2){1}
\put(-0.2,2){2}\put(1.85,2.15){2}\put(2.3,3){3}\put(-1.85,1.75){4}\put(-1.05,1.75){1}\put(0.35,1.75){4}
\put(0.75,1.75){1}\put(-1.85,0.65){3} \put(-1.05,0.65){2}\put(0.85,0.75){2}\put(1.6,0.75){3}
\end{picture}
\end{figure}

\noindent The computations in the example now follow. The matrix $Q$ is given by
\[
Q = \left[ \begin{array}{cccc} 1 & 0 & 0 & 0\\ 0 & n+1 & n+2 & 0\\2 & n & n+1 & 0\\0 & 0 & 0 & 1 \end{array} \right]
\]
Denote the simplex with the columns of $Q$ as vertices by $\Delta(Q)$. Let the image under $Q$, of the configuration space for the stage be $W(Q)$.

As in section~\ref{dynamics}, let $e_{ij}$ be the midpoint of the edge $[e_i,e_j]$. As shown in Figure~\ref{initialconfig}, in the standard simplex, the initial configuration space $W$ is the quadrilateral with vertices $e_{13}, e_{14}, e_{24}$ and $e_{23}$.

\begin{figure}[htb]
\begin{center}
\ \psfig{file=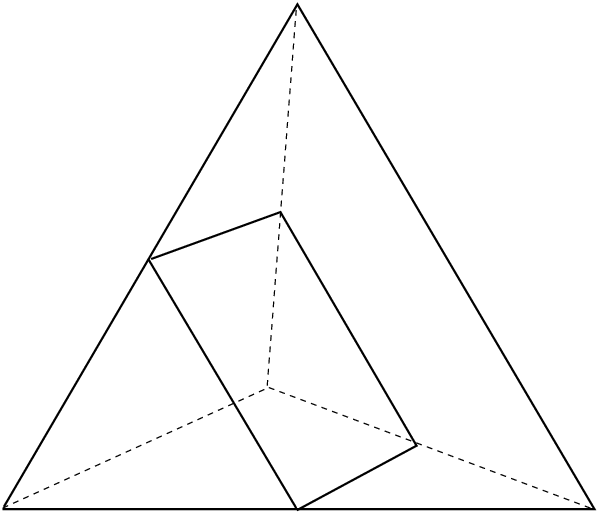, height=1.5truein} \caption{$W$ in $\Delta$} \label{initialconfig}
\end{center}
\setlength{\unitlength}{1in}
\begin{picture}(0,0)(0,0)
\put(-0.95,0.45){$e_1$} \put(0,0.45){$e_{14}$} \put(0.95,0.45){$e_4$} \put(0,2.15){$e_3$} \put(0,0.95){$e_2$} \put(0,1.45){$e_{23}$}
\put(0.4,0.8){$e_{24}$} \put(-0.65,1.3){$e_{13}$}
\end{picture}
\end{figure}

Let $Q(ij)$ denote the midpoint of the edge $[Q(i),Q(j)]$ of $\Delta(Q)$. As shown in Figure~\ref{stage}, $W(Q)$ is the quadrilateral in $\Delta(Q)$ with vertices $Q(12), Q(13), Q(34)$ and $Q(24)$.

\begin{figure}[htb]
\begin{center}
\ \psfig{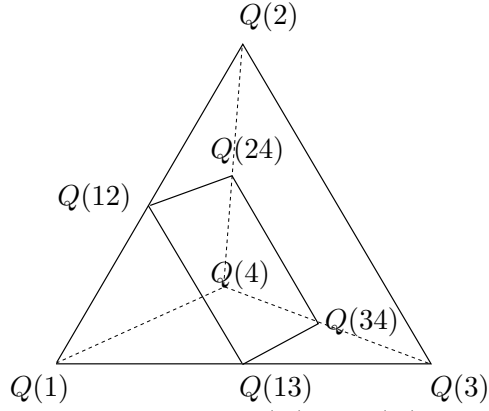} \caption{$W(Q)$ in $\Delta(Q)$} \label{stage}
\end{center}
\setlength{\unitlength}{1in}
\begin{picture}(0,0)(0,0)
\put(-1.2,0.4){$Q(1)$} \put(0,0.4){$Q(13)$} \put(1,0.4){$Q(3)$} \put(0,2.35){$Q(2)$} \put(-0.15,1.65){$Q(24)$} \put(0.45,0.75){$Q(34)$}
\put(-0.95,1.4){$Q(12)$} \put(-0.15,1){$Q(4)$}
\end{picture}
\end{figure}

The columns representing the vertices of $W(Q)$ are
\begin{eqnarray*}
Q(12) &=& \left[ \begin{array}{c} 1/2 \\ (n+1)/2 \\(n+2)/2\\ 0 \end{array}\right], Q(13) = \left[ \begin{array}{c} 1/2 \\ (n+2)/2 \\(n+3)/2\\ 0
\end{array}\right] \\
Q(34) &=& \left[ \begin{array}{c} 0 \\ (n+2)/2 \\(n+1)/2\\ 1/2 \end{array}\right], Q(24) = \left[ \begin{array}{c} 0\\(n+1)/2\\n/2\\1/2
\end{array} \right]
\end{eqnarray*}
Projectivize to find the images of these vertices in the configuration space $W$ of the starting stage
\begin{eqnarray*}
\Gamma Q(12) &=& \left[ \begin{array}{c} 1/2(n+2)\\(n+1)/2(n+2)\\1/2\\0 \end{array}\right], \Gamma Q(13) = \left[\begin{array}{c}
1/2(n+3)\\(n+2)/2(n+3)\\1/2\\0 \end{array}\right]\\
\Gamma Q(34) &=& \left[ \begin{array}{c} 0\\1/2\\(n+1)/2(n+2)\\ 1/2(n+2)\end{array}\right], \Gamma Q(24) = \left[\begin{array}{c}
0\\1/2\\n/2(n+1)\\1/2(n+1)\end{array}\right]
\end{eqnarray*}

\begin{figure}[htb]
\begin{center}
\ \psfig{file=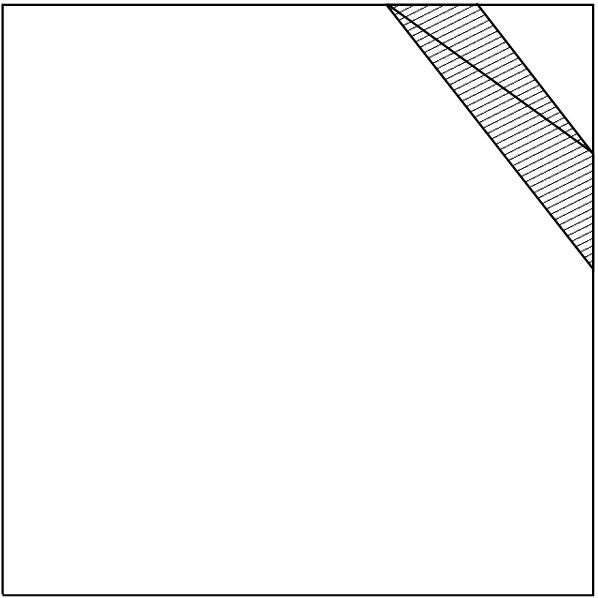, height=2truein} \caption{Projection of W(Q)} \label{project}
\end{center}
\setlength{\unitlength}{1in}
\begin{picture}(0,0)(0,0)
\put(1.1,0.6){$e_{24}$} \put(-1.2,0.6){$e_{14}$} \put(-1.2,2.5){$e_{13}$} \put(1.1,2.5){$e_{23}$} \put(-0.05,2.65){$\Gamma Q(12)$}
\put(0.6,2.65){$\Gamma Q(13)$} \put(1.1,2.05){$\Gamma Q(34)$} \put(1.1,1.55){$\Gamma Q(24)$}
\end{picture}
\end{figure}
In terms of the vertices of the quadrilateral $W$, we get the linear combinations
\begin{eqnarray*}
\Gamma Q(12) &=& \left(\frac{1}{n+2}\right) e_{13} + \left(\frac{n+1}{n+2}\right) e_{23} \\
\Gamma Q(13) &=& \left(\frac{1}{n+3} \right)e_{13} + \left(\frac{n+2}{n+3}\right) e_{23} \\
\Gamma Q(34) &=& \left(\frac{n+1}{n+2}\right)e_{23} +\left(\frac{1}{n+2} \right)e_{24}\\
\Gamma Q(24) &=& \left(\frac{n}{n+1}\right)e_{23} + \left(\frac{1}{n+1}\right) e_{24}
\end{eqnarray*}
By symmetry, the quadrilateral $W$ is a square with side length $1/\sqrt{2}$. $W(Q)$ projects down as shown in the Figure~\ref{project}. From this we calculate the area of the quadrilateral $\Gamma Q(12) \Gamma Q(24) \Gamma Q(34) \Gamma Q(13)$ to be $1/2(n+1)(n+2)(n+3)$.

Now suppose that in the next split band 1 splits band 2. As shown in Figure~\ref{split}, the split is represented by the smaller simplex with vertices $Q(2), Q(3), Q(4)$ and $Q(12)$. The part of $W(Q)$ that lies inside this smaller simplex is the part of $W(Q)$ that lies above the shaded plane in Figure~\ref{split}. This is the triangle $Q(12) Q(24) Q(34)$. Projecting the triangle down to $W$ gives the triangle $\Gamma Q(12) \Gamma Q(24) \Gamma Q(34)$ in Figure~\ref{project}. The area of this triangle is $1/4(n+1)(n+2)^2$.

\begin{figure}[htb]
\begin{center}
\ \psfig{file=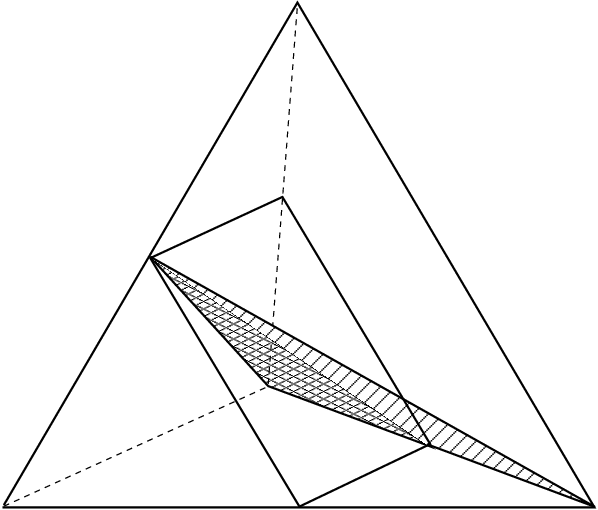, height=1.7truein} \caption{Band 1 splits Band 2} \label{split}
\end{center}
\setlength{\unitlength}{1in}
\begin{picture}(0,0)(0,0)
\put(-1.2,0.4){$Q(1)$} \put(0,0.4){$Q(13)$} \put(1,0.4){$Q(3)$} \put(0,2.3){$Q(2)$} \put(-0.15,1.7){$Q(24)$} \put(0.3,1.05){$Q(34)$}
\put(-0.95,1.4){$Q(12)$}

\end{picture}
\end{figure}

Hence, the probability of the split is the ratio of the area of the triangle $\Gamma Q(12) \Gamma Q(24) \Gamma Q(34)$ to the area of quadrilateral $\Gamma Q(12) \Gamma Q(24) \Gamma Q(34) \Gamma Q(13)$ viz. $(n+3)/2(n+2)$. We also see that $\vert Q(1) \vert$ increases by a factor of $2(n+2)/3$ in the split, finishing the computations.

The second issue is that for non-classical interval exchanges, isolated blocks are possible when there is a combinatorial reduction. They arise as follows: Write the combinatorial reduction as a concatenation of two non-classical interval exchanges with fewer bands, and consider the non-classical interval exchange on the right. If there is a splitting sequence of this smaller exchange in which every column moves every other column, then this splitting sequence is an isolated block for the original non-classical interval exchange. This point is overlooked in Kerckhoff \cite{Ker} and also in Proposition 3.6 of \cite{Nog} and Lemma 1.7 of \cite{Dan-Nog}. However, irreducibility of a non-classical interval exchange implies that one splits out of these blocks with a definite probability. See Step 1 of Proposition~\ref{inductive-step}.

From the next section on, we begin the technical details starting with the analysis of the Jacobian of the restriction of the projective linear map to the configuration spaces.

\section{Jacobian of the restriction}\label{Jacobian}

Let $\pi_0 \to \pi_1 \to \cdots \to \pi_n$ be a stage. For the rest of the paper, we simplify the notation by dropping the subscripts i.e., henceforth we will denote $\pi_n$ by $\pi$, the configuration space $W_n$ defined by $\pi_n$ as $W$ and the associated matrix $Q_n$ as $Q$. In this section, we analyze the Jacobian of $\Gamma Q$ restricted as a map from $W$ to $W_0$. The expression for the Jacobian of full map $\Gamma Q$ from $\Delta$ to itself is well known. See \cite{Buf}. At a point in the configuration space $W$, we write down a matrix for the derivative of the full map $\Gamma Q$ with respect to suitable decompositions as direct sums, of the tangent spaces to $\Delta$ at the point itself and its image under $\Gamma Q$. Then using the expression for the full Jacobian and the particular form of this matrix, we get an expression for the Jacobian of $\Gamma Q$ as a map from $W$ to $W_0$.

We fix some terminology. Given an affine subspace $L$ of $\R^\A$ and some $\y \in \R^\A$ which need not be in $L$, we let $T_\y L$ be the subspace of the tangent space at $\y \in \R^\A$ parallel to $L$. We will denote a vector in $\R^\A$ and also in the tangent space at any point $\y$, by the same letter $\mathbf{u}$, whenever the context is clear. Associated to a codimension 1 subspace $L$ transverse to a vector $\mathbf{u}$, there is a projection map $\phi_{\mathbf{u}}:T_{\y} \R^\A \to T_{\y} L$ by projecting along lines parallel to $\mathbf{u}$ till one hits $L$. A dilation of $\R^\A$ by $t>0$ shall be denoted by $\rho_t$. The derivative of a linear map $A$ acting on tangent spaces is denoted by $A$ itself.

Let $\Delta(Q)$ be the simplex with vertices the columns of $Q$. The map $\Gamma Q: \Delta \to \Delta$ is a diffeomorphism onto its image. This means that, for any point $\y \in \Delta$, the vector $\w = Q \y$ is transverse to both $\Delta(Q)$ and $\Delta$. Hence at the level of tangent spaces $\w$ is transverse to $T_{\w} \Delta(Q)$ and $T_{\w} \Delta$. So the projection map $\phi_{\w}: T_{\w} \R^\A \to T_{\w}\Delta$ by projecting along lines parallel to $\w$ is an isomorphism from $T_{\w} \Delta(Q)$ to $T_{\w} \Delta$. The full derivative $\dr \Gamma Q(\y): T_\y \Delta \to
T_{\Gamma Q(\y)} \Delta$ is the composition
\[
T_\y \Delta \xrightarrow{Q} T_{\w}\Delta(Q) \xrightarrow{\phi_{\w}} T_{\w} \Delta \xrightarrow{\rho_{|\w|^{-1}}} T_{\Gamma Q(\y)} \Delta
\]
It is known that (see Bufetov \cite{Buf}) the Jacobian $\J_\Delta(\Gamma Q)$ of the above composition is given by
\begin{equation}\label{full-jaco}
\J_\Delta(\Gamma Q)(\y) = \frac{1}{|\w|^d} = \frac{1}{|Q \y|^d}
\end{equation}
Fix unit vectors $\m_0$ and $\m$ normal to $W_0$ and $W$ respectively such that the tangent bundle $T\Delta$ has the orthogonal decompositions
\[
T\Delta = (\R \m_0) \oplus T W_0 \hspace{5mm}, \hspace{5mm} T\Delta = (\R \m) \oplus T W
\]
where $T W_0$ and $TW$ are the tangent bundles of the configuration spaces.

Since $\Gamma Q$ restricted to $W$ maps it into $W_0$ and is a diffeomorphism onto its image, at any point $\y \in W$ the full derivative $\dr \Gamma Q$ restricts to an isomorphism from $T_\y W$ to $T_{\Gamma Q(\y)}W_0$. So with respect to the orthogonal decompositions fixed above, the matrix at $\y$ for $\dr \Gamma Q$ has the form:
\begin{equation*}
\left(
\begin{array}{ccc|c}
 & & & * \\
 & \dr \Gamma Q|_{T_\y W \to T_{\Gamma Q(\y)}W_0} & & *\\
 & & & *\\
0 & \dotsc & 0 & c(\y)
\end{array}
\right)
\end{equation*}
From the matrix form we get
\begin{equation}\label{relate-jacobians}
\J_{\Delta}(\Gamma Q)(\y) = c(\y) \J(\Gamma Q)(\y)
\end{equation}
To compute $c(\y)$, we use the fact that
\[
\dr \Gamma Q(\y) = \rho_{|\w|^{-1}}\circ \phi_{\w} \circ Q
\]{
In the composition, we project the vector $Q \m \in T_\w \Delta(Q)$ by lines parallel to $\w$ to get $\phi_{\w}(Q \m) \in T_{\w} \Delta$. Next we write $\phi_{\w}(Q \m)$ as the unique linear combination
\[
\phi_{\w}(Q \m)= a(\y) \m_0 + \mathbf{p}
\]
in $T_\w \Delta$, where $\mathbf{p} \in T_\w W_0$. Last we apply the dilation $\rho_{|\w|^{-1}}$ to get $c(\y) = a(\y)/|\w|$. Hence from equation~\eqref{relate-jacobians} we get
\begin{equation}\label{Jac}
\J(\Gamma Q)(\y) = \frac{1}{a(\y) |\w|^{d-1}} = \frac{1}{a(\y)|Q \y|^{d-1}}
\end{equation}

\begin{figure}[htb]
\begin{center}
\ \psfig{file=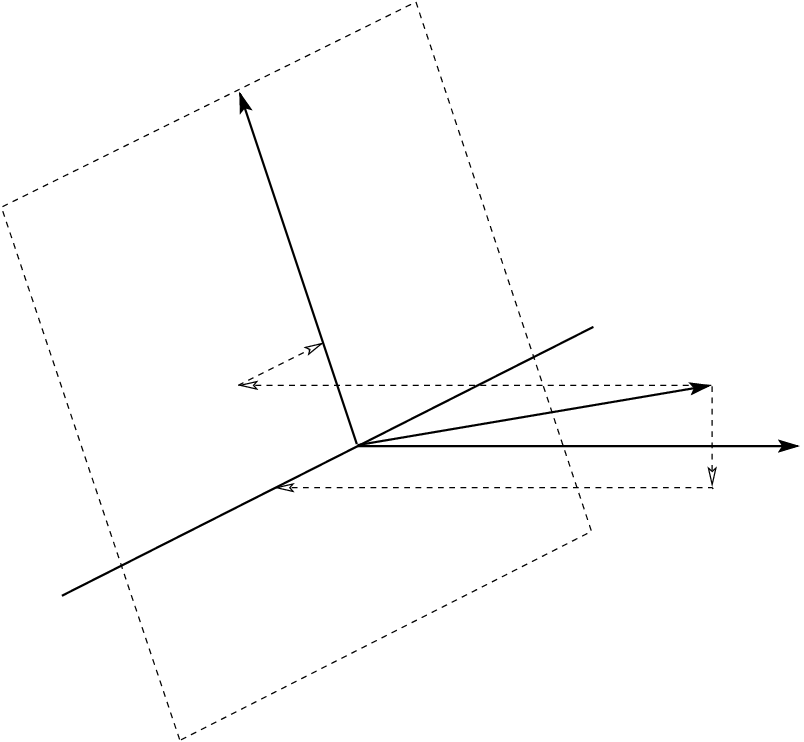, height=3truein}\caption{Schematic picture} \label{scheme-jaco}
\end{center}
\setlength{\unitlength}{1in}
\begin{picture}(0,0)(0,0)
\put(-1.9,1.1){$T_\w W_0$} \put(1.7,1.8){$\w$} \put(1.3,2.1){$Q\m$} \put(-0.9,3.4){$\m_0$} \put(-1.3,2.1){$\phi_\w(Q \m)$} \put(0.4,3){$T_\w
\Delta$}
\end{picture}
\end{figure}

\begin{lemma}
As $\y$ varies over $W$, the quantity $a(\y)$ remains constant.
\end{lemma}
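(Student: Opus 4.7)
The plan is to compute $a(\y)$ explicitly and exploit the fact that $JQ$ maps the affine hyperplane containing $W$ into the affine hyperplane containing $W_0$. The key observation is that in the formula $\dr JQ(\y) = \rho_{|\w|^{-1}} \circ \phi_{\w} \circ Q$, only the projection step $\phi_{\w}$ (and the scalar dilation) actually depend on $\y$; the input $Q\m$ is the image of a fixed vector under a fixed linear map, and the target normal $\m_0$ is likewise fixed.

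First I would write $\phi_{\w}$ in coordinates. Since $T_\w \Delta$ is the $\w$-independent linear subspace $\{v \in \R^d : \sum v_i = 0\}$ and $\phi_{\w}$ projects along $\w$, one finds
\[
\phi_{\w}(Q\m) \;=\; Q\m \;-\; \frac{\sum_i (Q\m)_i}{|\w|}\,\w,
\]
where I have used that $\sum_i w_i = |\w|$ because $Q$ has non-negative entries and $\y \in \Delta$. Taking the inner product with $\m_0$, and using that the other summand $\mathbf{p} \in T_\w W_0$ is orthogonal to $\m_0$ by the chosen decomposition, I get
\[
a(\y) \;=\; \langle Q\m,\,\m_0\rangle \;-\; \Bigl(\sum_i (Q\m)_i\Bigr)\,\langle JQ(\y),\,\m_0\rangle.
\]

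The first term on the right-hand side is manifestly independent of $\y$, since $Q$, $\m$, and $\m_0$ are all fixed. For the second term I would invoke the fact that $JQ(W) \subset W_0$ and that $W_0$ lies inside a single affine hyperplane of the ambient hyperplane of $\Delta$, with unit normal exactly $\m_0$; consequently $\langle JQ(\y),\m_0\rangle$ is equal to the constant $\langle p,\m_0\rangle$ for any chosen $p \in W_0$, independent of $\y$. Substituting gives that $a(\y)$ is constant, as required. There is really no serious obstacle here: the argument is a three-line computation once $\phi_\w$ is expanded. The only conceptual content is the last step, which reflects the fact (built into $JQ(W)\subset W_0$ and proved in Section~\ref{dynamics}) that $Q$ sends the linear span of the configuration space $W$ into the linear span of $W_0$, i.e.\ preserves the switch condition.
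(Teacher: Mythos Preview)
Your proof is correct. Both your argument and the paper's ultimately rest on the same fact---that $\w = Q\y$ lies in the fixed linear subspace $A_0$ spanned by $W_0$ (equivalently, that $JQ(\y)\in W_0$)---but the presentations differ. The paper argues geometrically: it introduces the $(d-1)$-dimensional subspace $L = T_\w A_0$ (which is independent of $\w$), decomposes $Q\m = a\,\m_0 + \mathbf{q}$ with $\mathbf{q}\in L$, and observes that $\phi_\w$ fixes $\m_0$ and preserves $L$ (since $\phi_\w$ only subtracts a multiple of $\w\in L$); hence the $\m_0$-coefficient $a$ is unaffected by $\phi_\w$ and so independent of $\y$. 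You instead write $\phi_\w$ explicitly in coordinates and take the inner product with $\m_0$, reducing the $\y$-dependence to the single term $\langle JQ(\y),\m_0\rangle$, which is constant because $W_0$ sits in an affine hyperplane orthogonal to $\m_0$. Your route is shorter and more concrete; the paper's is coordinate-free and perhaps makes the role of the subspace $A_0$ more transparent. Either way the content is the same.
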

\begin{proof}
To prove the lemma, we need to understand the projection $\phi_\w: T_\w \Delta(Q) \to T_\w \Delta$. We refer to Figure~\ref{scheme-jaco} which is a schematic picture of $T_\w \R^\A$. In the figure, the projection $\phi_\w$ by lines parallel to $\w$ is represented by the horizontal dashed arrows. Let $L$ be the codimension 1 subspace in $T_\w \R^\A$ spanned by $\w$ and $T_\w W_0$. If $A_0$ denotes the subspace of $\R^\A$ spanned by $W_0$ i.e.~the subspace of $\R^\A$ satisfying the equation that defines $W_0$, then $L = T_\w A_0$. So $L$ does not depend on $\w$.

Since $\phi_\w$ gives an isomorphism from $T_\w W(Q)$ to $T_\w W_0$, $T_\w W(Q)$ is in $L$. In the figure, we represent $L$ by the horizontal $(x,y)$-plane. The vector $\m_0$ is orthogonal to $T_\w W_0$ and transverse to $\w$, so in fact transverse to $L$.

For the remainder of the proof, we shall denote the $(d-1)$-dimensional subspace of $\R^\A$ that is parallel to $\Delta$ by $\Delta$ itself. The essential point from the discussion above is: thinking of each tangent space $T_\w \R^\A$ as $\R^\A$ itself, there are two $(d-1)$-dimensional transverse subspaces $L$ and $\Delta$ with a vector $\m_0 \in \Delta$ not in $L$. Moreover, as $\y$ varies over $W$ which is equivalent to saying as $\w$ varies over $W(Q)$, the vector $\w$ is constrained to lie in $L$. Any vector $\mathbf{v} \in T_\w \R^\A$ can be uniquely written as
\[
\mathbf{v} = a \m_0 + \mathbf{q}
\]
where $\mathbf{q} \in L$. Applying the projection $\phi_\w: T_\w \R^\A \to T_\w \Delta$
\[
\phi_\w(\mathbf{v}) = a \phi_\w(\m_0) + \phi_\w(\mathbf{q}) = a \m_0 + \phi_\w(\mathbf{q})
\]
The projection $\phi_\w$ has the form $\phi_\w(\mathbf{v}) = \mathbf{v} - b \w$. This implies that the vector $\phi_\w(\mathbf{q})$ belongs to $L$. So the component of $\phi_\w(\mathbf{v})$ along $\m_0$ remains $a \m_0$, even as $\w$ varies. Choosing $\mathbf{v} = Q \m$ we are done.
\end{proof}

By the above lemma, we can drop the dependence of $a(\y)$ on $\y$ in the expression for the Jacobian and write it just as $a$. However the number $a$ does depend on the stage of the expansion. Observe that, up to the number $a$, the expression for the Jacobian of the restriction looks similar to the Jacobian of a determinant 1 projective linear map with non-negative entries in dimension $(d-1)$. We shall make this observation precise and use it to compute measures in Section~\ref{proof}.

Next, we give the definition for a set of $C$-distributed vectors.
\begin{definition}\label{C-distribution}
Let $C>1$. A set of vectors $\{ \mathbf{u}_1, \mathbf{u}_2, \dotsc, \mathbf{u}_k \}$ are said to be $C${\em-distributed} if
\[
\frac{1}{C} < \frac{|\mathbf{u}_i|}{|\mathbf{u}_j|} < C
\]
for all $i,j$ such that $1 \leq i, j \leq k$.
\end{definition}

We conclude from the previous lemma that a stage $\pi$ is $C$-uniformly distorted if and only if the vertices of $W(Q)$ are $C^{\frac{1}{d-1}}$-distributed. Moreover if the vertices of $\Delta(Q)$ are $C$-distributed, then the vertices of $W(Q)$ are also $C$-distributed. So, to show that for almost every $\x$, there exists a stage $\pi_{\x,n}$ that is $C$-uniformly distorted, it is enough to show that for almost every $\x$, there exists a stage $\pi_{\x,n}$ such that the columns of $Q_{\x,n}$ are $C$-distributed. The proof of this roughly follows Kerckhoff's original proof in the case of classical interval exchanges while weakening various hypothesis.

In the next section, we recall the measure theory for determinant 1 projective linear maps with non-negative entries, from the standard simplex into itself. Additionally, we evaluate measures of certain subsets of the standard simplex, which shall be useful later.

\section{Measure Theory}\label{measure-theory}

\subsection{Measure Theory of Projective linear maps:}
Let $Q: \R^\A \to \R^\A$ be a linear map with non-negative entries and determinant 1. Projectivize $Q$ to get the map $\Gamma Q: \Delta \to \Delta$. We recall 4.4 Lemma 1 from Yoccoz \cite{Yoc}:

\begin{lemma}\label{measure-plm}
\[
\frac{\ell(\Gamma Q(\Delta))}{\ell(\Delta)} = \frac{1}{\prod\limits_{\alpha = 1}^{d} \vert Q(\alpha) \vert}
\]
\end{lemma}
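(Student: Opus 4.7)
The key observation is that $JQ(\Delta)$ is precisely the convex hull, inside $\Delta$, of the points $v_\alpha := Q(\alpha)/\vert Q(\alpha)\vert$ for $\alpha = 1,\dots,d$: indeed $JQ$ sends each vertex $e_\alpha$ to $v_\alpha$, and by non-negativity of $Q$ each $v_\alpha$ lies in $\Delta$. My plan is to compute the ratio of $(d-1)$-volumes by passing to $d$-dimensional simplices obtained by coning over the origin of $\R^d$, where everything reduces to a single determinant calculation.

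Both $\Delta$ and $JQ(\Delta)$ lie in the affine hyperplane $H = \{\,\x \in \R^d : \vert \x\vert = 1\,\}$, which is at distance $1/\sqrt{d}$ from the origin. Let $\Sigma = \mathrm{conv}\{0,e_1,\dots,e_d\}$ and $\Sigma' = \mathrm{conv}\{0,v_1,\dots,v_d\}$ be the $d$-simplices obtained by coning $\Delta$ and $JQ(\Delta)$ to the origin. The elementary cone formula
\[
\mathrm{Vol}_d(\text{cone}) \;=\; \frac{1}{d}\cdot \mathrm{dist}(0,H) \cdot \mathrm{Vol}_{d-1}(\text{base})
\]
depends only on the apex and on the ambient hyperplane of the base, so the same proportionality constant applies to both $\Sigma$ and $\Sigma'$. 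Consequently
\[
\frac{\ell(JQ(\Delta))}{\ell(\Delta)} \;=\; \frac{\mathrm{Vol}_d(\Sigma')}{\mathrm{Vol}_d(\Sigma)}.
\]

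Finally, the $d$-volume of a simplex with one vertex at the origin equals $(1/d!)$ times the absolute value of the determinant of the matrix whose columns are the remaining vertices. This gives $\mathrm{Vol}_d(\Sigma) = 1/d!$, while multilinearity of the determinant combined with $\det Q = 1$ yields
\[
\det(v_1,\dots,v_d) \;=\; \frac{\det(Q(1),\dots,Q(d))}{\prod_\alpha \vert Q(\alpha)\vert} \;=\; \frac{1}{\prod_\alpha \vert Q(\alpha)\vert},
\]
so $\mathrm{Vol}_d(\Sigma') = 1/\bigl(d!\,\prod_\alpha \vert Q(\alpha)\vert\bigr)$. Taking the ratio proves the lemma. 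There is no serious obstacle; the only point one has to emphasize is that the proportionality constant in the cone formula is common to $\Sigma$ and $\Sigma'$ because they share the ambient hyperplane $H$, which is exactly what lets us bypass any Jacobian computation for the radial projection $Q\y\mapsto JQ(\y)$ and convert the problem into the one-line determinant identity above.
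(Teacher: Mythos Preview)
Your proof is correct. The paper does not actually prove this lemma; it quotes it from Yoccoz and merely remarks that the formula can be obtained by integrating the full Jacobian $\J_\Delta(JQ)(\y)=1/|Q\y|^d$ over $\Delta$. Your argument takes a genuinely different route: instead of computing and integrating the Jacobian of the radial projection, you cone both $\Delta$ and $JQ(\Delta)$ to the origin, observe that the common height $1/\sqrt{d}$ cancels in the ratio, and reduce everything to the single determinant identity $\det(v_1,\dots,v_d)=\det Q/\prod_\alpha|Q(\alpha)|$. This is more elementary and self-contained --- it avoids both the Jacobian formula~(\ref{full-jaco}) and the nontrivial integral $\int_\Delta |Q\y|^{-d}\,d\y$ --- whereas the approach hinted at in the paper has the advantage of fitting into the Jacobian framework already set up in Section~\ref{Jacobian}, which is what the rest of the argument relies on. One small point worth making explicit in your write-up: the identification of $JQ(\Delta)$ with the convex hull of the $v_\alpha$ holds because the cone from the origin over $JQ(\Delta)$ is exactly $Q\bigl((\R_{\geq 0})^d\bigr)$, whose extreme rays meet $H$ precisely at the $v_\alpha$; you assert this but it deserves one sentence of justification since $JQ$ is not affine.
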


\begin{remark}
The formula for $\ell(\Gamma Q(\Delta))$ can be derived by integrating the expression~\ref{full-jaco} for $\J_{\Delta}(\Gamma Q)$ on $\Delta$.
\end{remark}

\noindent The next corollary does not need the assumption that the matrix $Q$ has determinant 1. This will be relevant in Step 5 of Proposition~\ref{measure-norm}.

\begin{corollary}\label{elementary}
Let $E$ be an elementary matrix with the off-diagonal $(\beta, \alpha)$ entry equal to $R>0$ and let $Q$ be a linear map with non-negative entries. Then
\[
\frac{\ell(\Gamma(QE)(\Delta))}{\ell(\Gamma Q(\Delta))} = \frac{\vert Q(\alpha) \vert}{\vert Q(\alpha) \vert+ R \vert Q(\beta) \vert}
\]
\end{corollary}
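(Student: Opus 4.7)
The strategy is to reduce to Lemma~\ref{measure-plm} by rescaling, exploit the fact that $E$ modifies $Q$ in a very controlled way (only one column changes), and then cancel a common factor.

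Here is the plan. First, write $E = I + R\, M_{\beta\alpha}$ where $M_{\beta\alpha}$ has a $1$ in entry $(\beta,\alpha)$ and zeros elsewhere. Right multiplication by $E$ affects only the $\alpha$-th column:
\[
(QE)(\gamma) = Q(\gamma) \text{ for } \gamma \neq \alpha, \qquad (QE)(\alpha) = Q(\alpha) + R\, Q(\beta).
\]
Since both $Q$ and $QE$ have non-negative entries, additivity of the norm on $\R_{\geq 0}^d$, recalled in Section~\ref{dynamics}, gives $|(QE)(\alpha)| = |Q(\alpha)| + R|Q(\beta)|$, while $|(QE)(\gamma)| = |Q(\gamma)|$ for $\gamma \neq \alpha$.

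Next, although Lemma~\ref{measure-plm} is stated for matrices of determinant $1$, the map $JQ$ depends only on the projective class of $Q$, so we may replace $Q$ by its rescaled version $Q' = Q/D^{1/d}$, where $D = \det Q$; then $\det Q' = 1$ and $JQ = JQ'$. Applying Lemma~\ref{measure-plm} to $Q'$ and using $|Q'(\alpha)| = |Q(\alpha)|/D^{1/d}$ yields
\[
\frac{\ell(JQ(\Delta))}{\ell(\Delta)} = \frac{D}{\prod_\alpha |Q(\alpha)|}.
\]
Because $\det E = 1$, we have $\det(QE) = D$ as well, so the same rescaling argument applied to $QE$ produces
\[
\frac{\ell(J(QE)(\Delta))}{\ell(\Delta)} = \frac{D}{\prod_\gamma |(QE)(\gamma)|}.
\]

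Finally, take the ratio of the two displayed identities; the factor $D$ and $\ell(\Delta)$ cancel, leaving
\[
\frac{\ell(J(QE)(\Delta))}{\ell(JQ(\Delta))} = \frac{\prod_\alpha |Q(\alpha)|}{\prod_\gamma |(QE)(\gamma)|} = \frac{|Q(\alpha)|}{|(QE)(\alpha)|} = \frac{|Q(\alpha)|}{|Q(\alpha)| + R|Q(\beta)|},
\]
where the middle equality uses that the $\gamma \neq \alpha$ column norms agree between $Q$ and $QE$. This is the desired formula. There is no genuine obstacle here; the one thing worth being careful about is that the lemma is invoked only through a ratio of its two normalized instances, which is what lets us bypass the determinant-$1$ hypothesis on $Q$ itself.
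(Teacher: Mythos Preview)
Your argument is correct and follows essentially the same route as the paper: rescale $Q$ so that Lemma~\ref{measure-plm} applies to both $Q$ and $QE$ (using $\det E = 1$), take the ratio, and cancel all column norms except the $\alpha$-th, which changes by exactly $R|Q(\beta)|$. The paper phrases the rescaling as ``both sides are unchanged under $Q \mapsto tQ$, so reduce to $\det Q = 1$,'' but this is the same idea as your normalization by $D^{1/d}$.
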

\begin{proof}
As both sides of the claimed equality are unchanged if we replace $Q$ by $tQ$, we can reduce to the case $\det(Q) =1$. By applying Lemma~\ref{measure-plm}, we get
\[
\frac{\ell(\Gamma(QE)(\Delta))}{\ell(\Gamma Q(\Delta))} = \frac{\prod\limits_{\gamma=1}^{d} \vert Q(\gamma)\vert}{\prod\limits_{\gamma=1}^{d} \vert QE(\gamma) \vert} = \frac{\vert Q(\alpha) \vert}{\vert QE(\alpha) \vert} = \frac{\vert Q(\alpha) \vert}{\vert Q(\alpha)\vert+ R \vert Q(\beta) \vert}
\]
\end{proof}

\subsection{Evaluating measures:}
We apply Corollary~\ref{elementary} to evaluate measures of certain subsets of $\Delta$ that we shall encounter in Section 9.

\subsection*{Wedges:} For a pair of distinct labels $\alpha, \beta \in \A$ and a non-negative constant $R$, let $\Delta_R(\alpha,\beta)$ be the set of points in $\Delta$ whose convex combination $\x = \sum \lambda_\gamma e_\gamma$ satisfies
\begin{equation}\label{wedge-defn}
\frac{\lambda_\beta}{\lambda_\alpha} \geq R
\end{equation}
\begin{wrapfigure}{r}{0.22\textwidth}
\begin{center}
\ \epsfig{file=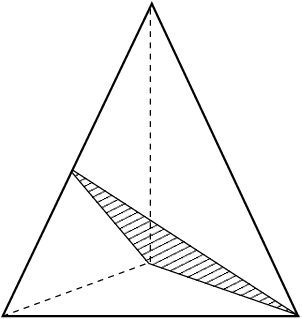, width=0.18\textwidth} \caption{$\Delta_R(\alpha,\beta)$}\label{wedge}
\end{center}
\setlength{\unitlength}{1in}
\begin{picture}(0,0)(0,0)
\put(0.7,2){$e_\alpha$} \put(0.05,0.62){$e_\beta$}
\end{picture}
\end{wrapfigure}
Figure~\ref{wedge} shows a schematic picture of $\Delta_R(\alpha,\beta)$. The shaded plane $F_R(\alpha, \beta)$ represents the set of points for which $\lambda_\beta /\lambda_\alpha = R$. The set $\Delta_R(\alpha,\beta)$ is the region in Figure~\ref{wedge} that lies below the shaded plane. Thus it looks like a {\it wedge} in $\Delta$.

Let $Q: \R^\A \to \R^\A$ be a linear map with non-negative entries. Write $R$ in the form
\begin{equation}\label{R}
R = K \frac{\vert Q(\alpha) \vert}{\vert Q(\beta)\vert}
\end{equation}
for some $K>0$. Then
\[
\Gamma Q(\Delta_R(\alpha,\beta)) = \Gamma (QE)(\Delta)
\]
where $E$ is the elementary matrix with the $(\beta, \alpha)$ entry equal to $R$. Applying Corollary~\ref{elementary}, we get
\begin{equation}\label{wedge-measure}
\frac{\ell \left( \Gamma Q(\Delta_R(\alpha,\beta)) \right)}{\ell \left( \Gamma Q(\Delta) \right)} = \frac{1}{1+K}
\end{equation}

\subsection*{Half-space intersections:} Having established Equation~\eqref{wedge-measure} for the relative measure of wedges, we move on to more general subsets of $\Delta$ which we call {\it thin half-space intersections}. The wedges $\Delta_R(\alpha, \beta)$ are an instance of these. We encounter thin half-space intersections in the proof of Proposition~\ref{measure-norm}.

A half-space $H$ in $\R^\A$ that intersects $\Delta$ and does not contain the vertex $e_\alpha$ shall be called a half-space {\it opposite} $\alpha$. Let $S_H$ denote the set of those $\gamma \neq \alpha$ for which the intersection $\partial H \cap [e_\alpha, e_\gamma]$ is non-empty, where $[e_\alpha,e_\gamma]$ denotes the edge of $\Delta$ joining $e_\alpha$ with $e_\gamma$. For $R_0>0$, a half-space $H$ opposite $\alpha$ is said to be {\em $R_0$-thin} if for every $\gamma \in S_H$, the intersection point $a_{\alpha \gamma}= \partial H \cap [e_\alpha, e_\gamma]$ has a convex combination that satisfies
\[
\frac{\lambda_\gamma}{\lambda_\alpha} \geq R_0
\]
We have the following inclusion
\begin{lemma}\label{halfspace}
For $R = R_0/d$, we have
\[
H \cap \Delta \subset \bigcup_{\gamma \in S_H} \Delta_{R}(\alpha, \gamma)
\]
\end{lemma}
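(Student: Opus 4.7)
The plan is to parametrize the half-space by a linear functional and reduce the claim to a pigeonhole estimate.

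Choose a linear functional $L$ on $\R^d$ so that $H = \{y : L(y) \geq 0\}$, and normalize so $L(e_\alpha) < 0$ (possible because $e_\alpha \notin H$). Then I would first characterize $S_H$ dynamically: for $\gamma \neq \alpha$, the segment $[e_\alpha,e_\gamma]$ meets $\partial H$ iff $L(e_\gamma) \geq 0$, so $S_H = \{\gamma \neq \alpha : L(e_\gamma) \geq 0\}$ and the complementary indices satisfy $L(e_\gamma) < 0$. Next, for $\gamma \in S_H$ with $L(e_\gamma) > 0$, a direct computation of the crossing point $a_{\alpha\gamma} = t e_\alpha + (1-t) e_\gamma$ from $t L(e_\alpha) + (1-t) L(e_\gamma) = 0$ gives $\lambda_\gamma/\lambda_\alpha = |L(e_\alpha)|/L(e_\gamma)$ at $a_{\alpha\gamma}$, so the $R_0$-thin hypothesis translates to the crisp inequality
\[
L(e_\gamma) \;\leq\; \frac{|L(e_\alpha)|}{R_0} \qquad \text{for every } \gamma \in S_H.
\]
(Vertices $e_\gamma$ with $L(e_\gamma) = 0$ are in $\partial H$, for which $a_{\alpha\gamma} = e_\gamma$ and the thinness condition holds vacuously.)

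Now take an arbitrary $\x = \sum_\beta \lambda_\beta e_\beta \in H \cap \Delta$. If $\lambda_\alpha = 0$ there is nothing to prove (the membership in $\Delta_R(\alpha,\gamma)$ is automatic for any $\gamma$ with $\lambda_\gamma > 0$), so assume $\lambda_\alpha > 0$. Expand $L(\x) \geq 0$ and split by whether $\gamma \in S_H$, $\gamma = \alpha$, or $\gamma \notin S_H \cup \{\alpha\}$. The last group contributes a non-positive sum, hence
\[
\sum_{\gamma \in S_H} \lambda_\gamma L(e_\gamma) \;\geq\; \lambda_\alpha |L(e_\alpha)|.
\]
Since $|S_H| \leq d-1$, the pigeonhole principle produces some $\gamma^* \in S_H$ with
\[
\lambda_{\gamma^*} L(e_{\gamma^*}) \;\geq\; \frac{\lambda_\alpha |L(e_\alpha)|}{d-1}.
\]
Combining with the thinness inequality $L(e_{\gamma^*}) \leq |L(e_\alpha)|/R_0$ yields
\[
\frac{\lambda_{\gamma^*}}{\lambda_\alpha} \;\geq\; \frac{R_0}{d-1} \;\geq\; \frac{R_0}{d} \;=\; R,
\]
so $\x \in \Delta_R(\alpha,\gamma^*)$, which is the desired inclusion.

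The only real obstacle is bookkeeping around the degenerate cases (points on $\partial H$, vertices lying in $\partial H$, and $\lambda_\alpha = 0$); each of these is dispatched by observing that the ratio condition defining $\Delta_R(\alpha,\gamma)$ is either automatic or holds at a boundary value, so the pigeonhole step drives the whole proof.
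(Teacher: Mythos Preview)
Your proof is correct and actually a bit cleaner than the paper's. The paper argues geometrically in two steps: first it introduces an intermediate convex set $\Delta(R_0)$ (the convex hull of a truncated sub-simplex $F(R_0)$ together with the vertices of $\Delta$ not in $S_H \cup \{\alpha\}$), shows $H\cap\Delta \subset \Delta(R_0)$ by checking vertices, and then shows $\Delta(R_0)$ lies in the union of wedges via a pigeonhole estimate on the barycentric coordinate $\lambda_\alpha$. You instead represent $H$ by a linear functional $L$, translate $R_0$-thinness into the inequality $L(e_\gamma)\le |L(e_\alpha)|/R_0$ for $\gamma\in S_H$, and run the pigeonhole directly on the expansion of $L(\x)\ge 0$. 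This avoids the auxiliary convex set entirely and in fact yields the sharper constant $R_0/(d-1)$ before you relax to $R_0/d$. The paper's route has the mild advantage of being coordinate-geometric (no choice of $L$), but your functional approach makes the role of the thinness hypothesis completely transparent and handles the degenerate cases ($\lambda_\alpha=0$, $L(e_\gamma)=0$) more uniformly.
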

\begin{proof}
Let $U$ be the sub-simplex of $\Delta$ whose vertices are indexed by $S_H \cup \{ \alpha \}$, and let $U(R_0)$ denote the set of points in $U$ whose convex combination in $U$ satisfies $\lambda_\alpha \leq 1/(1+R_0)$.
Let $\Delta(R_0)$ be the convex hull of $U(R_0)$ with the remaining vertices of $\Delta$.
\begin{claim}
\[
H \cap \Delta \subset \Delta(R_0)
\]
\end{claim}
\begin{proof}
Since both $H \cap \Delta$ and $\Delta(R_0)$ are convex sets, it is enough to show that every vertex of $H \cap \Delta$ is in $\Delta(R_0)$. By the virtue of being $R_0$-thin, the vertices $a_{\alpha \gamma}$ are in $\Delta(R_0)$. All the other vertices of $H \cap \Delta$ lie in $F(\alpha)$, the face of $\Delta$ opposite $e_\alpha$, which itself lies in $\Delta(R_0)$.
\end{proof}
\noindent The lemma then follows from the claim
\begin{claim}
\[
\Delta(R_0) \subset \bigcup_{\gamma \in S_H} \Delta_{R}(\alpha, \gamma)
\]
\end{claim}
\begin{proof}
Notice that the face $F(\alpha)$ of $\Delta$ sits in the union above. The vertices of $\Delta(R_0)$ that are not in $U$ lie in $F(\alpha)$. So it is enough to show that $U(R_0)$ lies in the union. The convex combination of a point in $U(R_0)$ satisfies $\lambda_\alpha \leq 1/(1+R_0)$. This implies that for at least one $\gamma \in S_H$,
\[
\lambda_\gamma \geq \left(\frac{1}{\# S_H}\right)\left( 1 - \frac{1}{1+R_0} \right) > \frac{R_0}{d(1+R_0)}
\]
It follows that
\[
\frac{\lambda_\gamma}{\lambda_\alpha} > \frac{R_0}{d}
\]
and so the point belongs to $\Delta_R(\alpha, \gamma)$.
\end{proof}
\end{proof}

\noindent In Proposition~\ref{measure-norm}, we need an upper bound on the measure of half-space intersections. This upper bound is derived there by using the inclusion of Lemma~\ref{halfspace} and then bounding the measures of the individual wedges in the union.

We now have all the preliminaries to carry out the proof of Theorem~\ref{Uniform-distortion}.

\section{Proof of Theorem~\ref{Uniform-distortion}}\label{proof}
Let $\pi$ be a stage in the expansion. Let $W$ be the configuration space at $\pi$ and let $Q$ be the matrix associated to $\pi$. 

\begin{proposition}\label{critical}
For almost every $\x \in \Gamma Q(W)$, every band $\alpha \in \A$ is split infinitely often and splits other bands infinitely often.
\end{proposition}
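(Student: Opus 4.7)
My plan is to reduce the statement by countable additivity to a single null-set claim and then settle that claim using strong connectivity of the sink together with a Lebesgue density argument.

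The proposition amounts to four symmetric statements: for each $\alpha \in S$, that $\alpha$ is split infinitely often and that $\alpha$ splits other bands infinitely often, each for almost every starting point. The two roles are handled in parallel; I describe the ``$\alpha$ is split infinitely often'' case.

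The set of $\x\in JQ(W)$ for which $\alpha$ is split only finitely often decomposes as the countable union over stages $n$ of the sets where $\alpha$ is not split after stage $n$, and each of these is a finite disjoint union over the combinatorial state at stage $n$. Via the bijection $JQ_\Gamma : W_{\pi'}\to C_\Gamma$ for each initial path $\Gamma$ of length $n$ ending at some state $\pi'$, whose Jacobian is given by \eqref{Jac} of Section~\ref{Jacobian} and is almost-everywhere finite, the problem reduces to showing that for every state $\pi'$ in the sink the set
\[
M(\pi',\alpha)\ =\ \{\y\in W_{\pi'} : \text{the forward expansion of }\y\text{ from }\pi'\text{ never splits }\alpha\}
\]
has Lebesgue measure zero in $W_{\pi'}$.

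Since $\pi_0$ lies in a sink which is strongly connected and finite, for each sink state $\pi'$ I can fix a finite splitting sequence $\Gamma_{\pi'}$ starting at $\pi'$ along which $\alpha$ is split. Lemma~\ref{measure-plm} combined with the restriction Jacobian formula from Section~\ref{Jacobian} yields a uniform positive lower bound $c_0 = \min_{\pi'} |C_{\Gamma_{\pi'}}|/|W_{\pi'}|$ on the relative measures of these cylinders inside their respective configuration spaces. For every $\y\in M(\pi',\alpha)$ and every $n$, the sub-cylinder $C_{\Gamma_n(\y)\cdot\Gamma_{\pi_n(\y)}}\subset C_{\Gamma_n(\y)}$ is disjoint from $M(\pi',\alpha)$, so inside every cylinder visited by $\y$ there is a definite ``escape route'' from the bad set.

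I plan to finish via a Lebesgue density point argument: if $|M(\pi',\alpha)|>0$, then almost every $\y \in M(\pi',\alpha)$ is a density point, and the cylinders $C_{\Gamma_n(\y)}$ shrink to $\y$ as $n\to\infty$. Continuity of the Jacobian $\J(JQ_{\Gamma_n(\y)})$ at $(JQ_{\Gamma_n(\y)})^{-1}(\y)\in W_{\pi_n(\y)}$, supplied by the explicit formula \eqref{Jac}, together with the uniform bound $c_0$, should keep the relative measure $|C_{\Gamma_n(\y)\cdot\Gamma_{\pi_n(\y)}}|/|C_{\Gamma_n(\y)}|$ asymptotically bounded below by a positive constant, contradicting the density being one at $\y$. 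The main obstacle will be rigorously converting Lebesgue density (defined via Euclidean balls) into density along the cylinder decomposition, since cylinders are not balls; this should be handled by a Vitali-type covering argument whose bounded-eccentricity input is provided by smoothness of the Jacobian on each of the finitely many configuration spaces.
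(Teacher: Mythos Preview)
Your approach is genuinely different from the paper's, and it has a real gap at the step where you invoke ``continuity of the Jacobian'' to get a uniform lower bound on the relative measure of the escape cylinder.

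Concretely: the relative measure of $C_{\Gamma_n(\y)\cdot\Gamma_{\pi_n(\y)}}$ inside $C_{\Gamma_n(\y)}$ equals
\[
\frac{\int_{C_{\Gamma_{\pi_n(\y)}}} |Q_n\z|^{-(d-1)}\,d\z}{\int_{W_{\pi_n(\y)}} |Q_n\z|^{-(d-1)}\,d\z},
\]
and this is governed by the variation of the Jacobian over \emph{all} of $W_{\pi_n(\y)}$, not by its value at the single preimage of $\y$. Continuity at one point says nothing about that global ratio. The explicit splitting sequence in Section~\ref{example} shows exactly this: the relative measure of a sub-cylinder can differ from the naive (undistorted) ratio by an arbitrarily large factor. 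Controlling this distortion is precisely Theorem~\ref{Uniform-distortion}, which the paper proves \emph{using} Proposition~\ref{critical}; so you cannot appeal to it here without circularity. The same issue undermines the bounded-eccentricity claim you need for the Vitali argument: the cylinders $C_{\Gamma_n(\y)}$ are projective-linear images of the full configuration space and can be arbitrarily thin in some directions when $Q_n$ is badly distorted.

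The paper avoids distortion estimates entirely at this stage. It first observes that if $\alpha$ appears in a critical position infinitely often then it is automatically split and splits infinitely often. Then, for the bad set $Z$ where some set $S_1$ of bands appears in critical positions only finitely often, it shows that past some stage the decomposition $S=S_1\sqcup S_2$ is a \emph{combinatorial reduction} of every subsequent state. Irreducibility of non-classical interval exchanges (Definition~\ref{irreducible}) then forces each such $\x$ to satisfy the linear constraint~\eqref{red-constraint}, a codimension-one condition inside the configuration space. Since there are countably many finite prefixes, $Z$ is a countable union of null sets. This structural argument needs no control on Jacobians and is what makes Proposition~\ref{critical} available as input to the later distortion estimates rather than as a consequence of them.
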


\begin{proof}
The first claim is that

\begin{claim}
Suppose in the expansion for some $\x$, a band $\alpha \in \A$ occurs in the critical positions infinitely often. Then $\alpha$ is split infinitely often and splits other bands infinitely often.
\end{claim}

\begin{proof}
Suppose $\alpha$ is split finitely many times. Then, there exists a future stage $\pi_{\x,m}$ after which $\alpha$ never gets split. So the actual width of $\alpha$ remains unchanged from $\pi_{\x,m}$ onwards. On the other hand, after $\pi_{\x,m}$, the band $\alpha$ splits some other band $\gamma$ infinitely often; this is impossible if the actual width of $\alpha$ is to remain fixed.

Alternatively, suppose $\alpha$ splits other bands finitely many times. A band leaves a critical position only if it splits some other band. Since $\alpha$ gets split infinitely often, there exists a future stage $\pi_{\x,m}$ after which $\alpha$ remains fixed in one of the critical positions and is the only band split thereafter. But then from $\pi_{\x,m}$ onwards, the actual widths of the rest of the bands remain unchanged because they never get split. This makes it impossible to split $\alpha$ ad infinitum.
\end{proof}

Thus, to prove Proposition~\ref{critical}, it is enough to show that almost surely, every band $\alpha$ occurs in the critical positions infinitely often. Let $Z$ be the set of those $\x$ in $\Gamma Q(W)$ for which there exists a band $\alpha$ that occurs in the critical positions finitely often in the expansion of $\x$. Fix such a point $\x$ and let $\A_1$ be the subset of those bands that occur in the critical positions finitely many times in the expansion of $\x$. Then there is a future stage $\pi_{\x,n}$ such that in the expansion of $\x$ after $\pi_{\x,n}$
\begin{enumerate}
\item The bands in $\A_1$ never occur in the critical positions.
\item Every band in $\A_2 = \A \setminus \A_1$ occur in the critical positions infinitely often.
\end{enumerate}
Moreover we assume that $\pi_{\x,n}$ is the first instance in the expansion of $\x$ in which the above properties hold.
\begin{claim}
$\A= \A_1 \sqcup \A_2$ is a combinatorial reduction of $\pi_{\x,n}$ and all future stages.
\end{claim}

\begin{proof}
Without loss of generality, suppose some $\beta \in \A_2$ occurs to the left of some $\alpha \in \A_1$ on the bottom. Since $\alpha$ never enters a critical position, whenever $\beta$ is split, the split has to begin at the other end of $\beta$. This increases the number of bands $k(\alpha)$ to the left of $\alpha$ on the bottom by 1. Moreover since $\beta$ is split infinitely often, the number $k(\alpha)$ has to become large enough to force $\alpha$ into the critical position on the bottom which is a contradiction. This proves the claim.
\end{proof}
Let $H$ be the set of finite splitting sequences $\jmath$ starting from $\pi$ such that final stage of $\jmath$ is combinatorially reducible. The set $H$ is countable because it is a subset of the countable set of finite splitting sequences.

By the above claim, we can define a map $\psi: Z \to H$. Let $Z_{\jmath} = \psi^{-1}(\jmath)$. If the set $Z_{\jmath}$ is non-empty, then the expansion after $\pi$ of each point in $Z_\jmath$ begins with the sequence $\jmath$. Since $H$ is a countable set, $\ell(Z) = \sum_{\jmath \in H} \ell(Z_\jmath)$. So to prove Proposition~\ref{critical} it is enough to show that $\ell(Z_{\jmath})=0$ for all $\jmath$ in $H$.
\begin{claim}
For every $\jmath$ in $H$, $\ell(Z_{\jmath}) = 0$.
\end{claim}
\begin{proof}
Let $\A= \A_1 \sqcup \A_2$ be the combinatorial reduction for the final stage $\pi'$ of $\jmath$. Let $(\lambda_\alpha ')$ denote the normalized widths at stage $\pi'$. We will show that a point in $Z_\jmath$ must satisfy constraint~\eqref{red-constraint} in the widths $(\lambda_\alpha ')$. Strong irreducibility implies that the set given by this constraint has measure zero, which then proves the claim.

For a point $\x$ that does not satisfy the constraint, set
\[
D = \sum_{\alpha \in \A_{2,-}} \lambda_{\alpha}' - \sum_{\beta \in \A_{2,+}} \lambda_{\beta}'
\]
and without loss of generality, assume that $D>0$. We claim that for a point with $D>0$, there is a stage in the expansion after $\pi'$ in which twice the sum of the actual widths of the bands in $\A_2$ is equal to $D$. When this happens, a band in $\A_1$ is forced into the critical position on the bottom. This would show that such a point cannot be in $Z_\jmath$.

Let $\lambda^\infty_\alpha$ denote the limit of the actual widths of $\alpha$ in the expansion of $\x$. To show that there is a stage in the expansion of $\x$ after $\pi'$ in which the sum of the actual widths of the bands in $\A_2$ is equal to $D$, we show that $\lambda^\infty_\alpha = 0$ for all $\alpha \in \A_2$. We prove this by showing that for any $\epsilon > 0$, there exists a future stage at which the actual widths of all bands in $\A_2$ are at most $\epsilon$.

For $\epsilon > 0$, there exists a stage $\pi_{\x,N}$ after $\pi'$ such that the actual widths $\lambda^{(N)}_\alpha$ at $\pi_{\x,N}$ satisfy $\lambda^\infty_\alpha < \lambda^{(N)}_\alpha < \lambda^\infty_\alpha + \epsilon$ for all $\alpha$ in $\A_2$. Then for every subsequent split after $\pi_{\x,N}$, the actual width of the band split can be reduced by at most $\epsilon$. This means that the actual width of a band doing the splitting is at most $\epsilon$. For the bands in $\A_2$, there is a future stage $\pi_{\x,N'}$ such that every band in $\A_2$ has split other bands at least once after $\pi_{\x,N}$ but before $\pi_{\x,N'}$. By the previous observation, the actual widths at $\pi_{\x,N'}$ of all bands in $\A_2$ are at most $\epsilon$ and we are done.
\end{proof}
This concludes the proof of Proposition~\ref{critical}.
\end{proof}
For $\alpha \in \A$ and $\x \in \Gamma Q(W)$, let $n(\x,\alpha)$ be the index of the first stage after $\pi$ such that, in the split $\pi_{n(\x,\alpha)} \to \pi_{n(\x,\alpha)+1}$ the band split is $\alpha$; if there is no such integer, set $n(\x,\alpha) = \infty$. From Proposition~\ref{critical}, we see that $n(\x,\alpha)$ is finite almost surely.

For $M>1$ let $ X_{M,\alpha} = \{ \x \in \Gamma Q(W) : \vert Q_{n(\x,\alpha)}(\alpha) \vert > M \vert Q(\alpha)\vert\}$. The set $X_{M,\alpha}$ is thus the set of those $\x$ in whose expansion the $\alpha$-column increases in norm by a factor greater than $M$ before $\alpha$ is split.

\begin{proposition}\label{measure-norm}
Let $\alpha$ be a band and $C>1$ be a constant such that
\[
\vert Q(\alpha) \vert > \frac{1}{C} \max_{\gamma \in \A} \vert Q(\gamma) \vert
\]
Then there exists a constant $M > 1$, depending only on $C$ and $d$, such that
\[
\ell(X_{M,\alpha}) < \frac{1}{2d} \ell(\Gamma Q(W))
\]
In fact, for all $M' > M$, the proportion of $X_{M',\alpha}$ in $\Gamma Q(W)$ has an upper bound that depends only on $M'$; moreover the bound $\to 0$ as $M' \to \infty$.
\end{proposition}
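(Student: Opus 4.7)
The strategy is to convert the dynamical condition defining $X_{M,\alpha}$ into a static bound on the $\alpha$-coordinate of the point $\y=(JQ)^{-1}(\x)\in W$, and then estimate the resulting subset via the thin half-space machinery of Section~\ref{measure-theory}. The reduction rests on two observations. First, between stages $\pi$ and $\pi_{n(\x,\alpha)}$ the band $\alpha$ is never split; a direct check on right-multiplication by elementary matrices shows that the $\alpha$-row of $M_n$ (with $Q_n=QM_n$) stays equal to the basis vector $\delta_\alpha$, so that the unnormalized $\alpha$-width at $\pi_n$ coincides with the unnormalized $\alpha$-width at $\pi$, i.e.\ $\lambda^{(n)}_\alpha=\lambda_\alpha$. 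Second, the conservation identity $\sum_\gamma|Q_n(\gamma)|\lambda^{(n)}_\gamma=|Q\lambda|=\sum_\gamma|Q(\gamma)|\lambda_\gamma$ combined with the hypothesis $|Q(\gamma)|\leq C|Q(\alpha)|$ gives $M|Q(\alpha)|\lambda_\alpha<|Q_n(\alpha)|\lambda^{(n)}_\alpha\leq|Q\lambda|\leq C|Q(\alpha)||\lambda|$, so that the normalized $\alpha$-coordinate satisfies $y_\alpha=\lambda_\alpha/|\lambda|<C/M$. Consequently $X_{M,\alpha}\subseteq JQ(H\cap W)$ for the half-space $H=\{y_\alpha\leq C/M\}$, which is opposite $\alpha$ and $(M/C-1)$-thin.

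Lemma~\ref{halfspace} then covers $H\cap\Delta$ by the wedges $\Delta_R(\alpha,\gamma)$, $\gamma\neq\alpha$, with $R=(M/C-1)/d$, and so $\ell(X_{M,\alpha})\leq\sum_{\gamma\neq\alpha}\ell(JQ(\Delta_R(\alpha,\gamma)\cap W))$. To estimate each wedge image I would establish a $W$-analog of Equation~\eqref{wedge-measure}: the Jacobian formula of Section~\ref{Jacobian}, $\J(JQ)|_W=1/(a|Q\y|^{d-1})$, has the same structural dependence on $|Q\y|$ as the full Jacobian $1/|Q\y|^d$, and the finitely many configuration-space volumes differ by factors depending only on $d$. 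This should yield a bound of the shape $\ell(JQ(\Delta_R(\alpha,\gamma)\cap W))/\ell(JQ(W))\leq c_d\cdot|Q(\alpha)|/(|Q(\alpha)|+R|Q(\gamma)|)$ with $c_d$ depending only on $d$. Summing over $\gamma$ and letting $R\to\infty$ with $M$, the total tends to $0$, and a choice of $M$ large in terms of $C$ and $d$ delivers the asserted $<1/(2d)$ bound together with the quantitative decay.

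The principal obstacle is the treatment of columns $Q(\gamma)$ with $|Q(\gamma)|\ll|Q(\alpha)|$, for which the wedge estimate becomes close to $1$ and so vacuous. I plan to surmount this not by a single application of Lemma~\ref{halfspace} but by an iterative refinement: using the general form of Corollary~\ref{elementary}, which does not require $\det Q=1$, I would further decompose each offending wedge along additional splitting directions, multiplicatively compounding the resulting measure ratios along each branch, and terminate when the surviving columns are all comparable in norm to $Q(\alpha)$. The remark preceding Corollary~\ref{elementary} highlights precisely this generality as what is needed in Step~$5$ of the proof. Organising the refinement so that its total weight still decays to $0$ as $M\to\infty$ is, in my view, the technically delicate heart of the argument.
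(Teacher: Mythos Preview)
Your reduction to the half-space $\{y_\alpha \leq C/M\}$ is valid but discards exactly the information needed to avoid the obstacle you name in your last paragraph. The paper's Step~1 retains more: writing $Q_{n(\x,\alpha)}(\alpha) = Q(\alpha) + \sum_{\beta\neq\alpha} c_\beta Q(\beta)$, one shows both that $\lambda_\beta/\lambda_\alpha \geq c_\beta$ for every $\beta$ (since $\alpha$ is never split, the $\alpha$-column of the intermediate matrix records these integers) and that for at least one $\beta$ the coefficient satisfies $c_\beta \geq K\,|Q(\alpha)|/|Q(\beta)|$ with $K=(M-1)/(d-1)$. Thus $Y_{M,\alpha}$ is covered by wedges $\Delta_{R_\beta}(\alpha,\beta)$ whose parameter $R_\beta = K\,|Q(\alpha)|/|Q(\beta)|$ already scales with the column norms. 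Carrying this scaling through the subsequent steps, when Corollary~\ref{elementary} is finally applied the column norms cancel and the resulting ratio is $\approx 4dC/(4dC+K)$ uniformly in $\beta$, regardless of how small $|Q(\beta)|$ is. With this sharper reduction the ``principal obstacle'' never arises, and your iterative-refinement plan is both unnecessary and, as stated, too vague to evaluate.

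Separately, you underestimate the work in the $W$-analog of~\eqref{wedge-measure}. The structural similarity of $1/(a|Q\y|^{d-1})$ to $1/|Q\y|^d$ is not enough on its own, because $W$ is not a simplex. The paper's Steps~3--5 triangulate $W$ by $(d-2)$-simplices $\Delta(i)$ all sharing a vertex $p$ equal to $e_\alpha$ or a midpoint $e_{\alpha\gamma}$, check that the half-space cut in each $\Delta(i)$ is still thin opposite $p$, reapply Lemma~\ref{halfspace} inside $\Delta(i)$, and then extract from $JQ|_{\Delta(i)}$ a genuine projective linear map $\widehat Q$ of a standard $(d-2)$-simplex to which Corollary~\ref{elementary} applies. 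The determinant-free generality of Corollary~\ref{elementary} is used in Step~5 not for any iteration but because this extracted $\widehat Q$ need not have determinant~$1$.
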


\begin{proof}
We shall prove Proposition~\ref{measure-norm} in a number of steps:

\subsection*{\it Step 1:} As a first step, we shall include the set $X_{M,\alpha}$ in a finite union of sets whose measures are easier to estimate.

Let $Y_{M,\alpha} = (\Gamma Q)^{-1}(X_{M,\alpha})$. For a $\beta \in \A$ and a positive constant $R$, recall from Section~\ref{measure-theory} that $\Delta_R(\alpha,\beta)$ is the set of points in $\Delta$ whose convex combination satisfies
\[
\frac{\lambda_\beta}{\lambda_\alpha} \geq R
\]
Write $R$ as
\[
R = K \frac{\vert Q(\alpha) \vert}{\vert Q(\beta) \vert}
\]
for some $K>0$ and let $Y_R(\alpha,\beta) = \Delta_R(\alpha,\beta) \cap W$ and let $X_R(\alpha,\beta) = \Gamma Q(Y_R(\alpha,\beta))$. We shall show that
\begin{lemma}
For $K= (M-1)/(d-1)$ we have the inclusion
\[
Y_{M,\alpha} \subseteq \bigcup_{\beta \neq \alpha} Y_R(\alpha,\beta)
\]
or equivalently
\[
X_{M,\alpha} \subseteq \bigcup_{\beta \neq \alpha} X_R(\alpha, \beta)
\]
\end{lemma}
\begin{proof}
Since $\alpha$ does not get split till $\pi_{n(\x,\alpha)}$, the column $Q_{n(\x,\alpha)}(\alpha)$ has the form
\[
Q_{n(\x,\alpha)}(\alpha) = Q(\alpha) + \sum_{\beta \neq \alpha} c_\beta Q(\beta)
\]
for non-negative integers $c_\beta$.
\begin{claim} \label{claim1}
Let $\y \in W$ be the pre-image of $\x$ under $\Gamma Q$. The convex combination $\y = \sum \lambda_\gamma e_\gamma$ must satisfy
\[
\frac{\lambda_\beta}{\lambda_\alpha } \geq c_\beta
\]
for all $\beta$.
\end{claim}
\begin{proof}
To simplify notation, let $m = n(\x,\alpha)$. Let $T$ denote the matrix associated to the splitting sequence $\pi \to \cdots \to \pi_m$. Starting
with the normalized widths $(\lambda_\gamma), \gamma \in \A$ at $\pi$, let $(\lambda^{(m)}_\gamma)$ denote the actual widths at $\pi_m$ resulting from the splitting sequence. Since $\alpha$ is never split till $\pi_m$, the width of $\alpha$ remains unchanged i.e.~$\lambda^{(m)}_\alpha = \lambda_\alpha$. Additionally, the entries of $\alpha$-th column of $T$ are given by $T_{\beta \alpha} = c_\beta$ for $\beta \neq \alpha$ and $T_{\alpha \alpha} = 1$. From
Section~\ref{dynamics}, the relationship between the old and the new widths is
\[
\lambda_\beta = \sum_{\gamma \in A} T_{\beta \gamma} \lambda^{(m)}_\gamma
\]
Since all terms on the right hand side above are non-negative, we get the inequality
\[
\lambda_\beta \geq T_{\beta \alpha} \lambda^{(m)}_\gamma = c_\beta \lambda_\alpha
\]
finishing the proof of the claim.
\end{proof}
\begin{claim} \label{claim2}
If $\vert Q_{n(\x,\alpha)}(\alpha) \vert \geq M \vert Q(\alpha) \vert$ for some constant $M>1$, then for at least one $\beta$
\[
c_\beta \geq \frac{M-1}{d-1} \frac{\vert Q(\alpha) \vert}{\vert Q(\beta) \vert}
\]
\end{claim}
\begin{proof}
By the additive property of the norm on $(\R_{\geq 0})^d$
\[
\vert Q_{n(\x,\alpha)}(\alpha) \vert = \vert Q(\alpha) \vert + \sum_{\beta \neq \alpha}c_\beta \vert Q(\beta) \vert
\]
If $\vert Q_{n(\x,\alpha)}(\alpha) \vert \geq M \vert Q(\alpha) \vert$, then
\[
\sum_{\beta \neq \alpha} c_\beta \vert Q(\beta) \vert \geq (M-1) \vert Q(\alpha) \vert
\]
which implies the claim.
\end{proof}
\noindent By Claims~\ref{claim1} and~\ref{claim2}, for each $\y \in Y_{M,\alpha}$, there is some $\beta$ such that the convex combination for $\y$ satisfies
\[
\frac{\lambda_\beta}{\lambda_\alpha} \geq K \frac{\vert Q(\alpha) \vert}{\vert Q(\beta) \vert}
\]
for $K = (M-1)/(d-1)$. This proves the lemma.
\end{proof}

To show that there is a choice of $M$ large enough such that $\ell(X_{M,\alpha})$ is bounded above by $(1/2d) \ell(\Gamma Q(W))$, it is enough to show that there is a $M$ large enough such that for every $\beta$, the quantity $\ell(X_R(\alpha, \beta))= \ell(\Gamma Q(Y_R(\alpha,\beta)))$ is bounded above by $(1/2d^2)\ell(\Gamma Q(W))$. So for the remainder of
this proof, we focus on one such $Y_R(\alpha,\beta)$.

\subsection*{\it Step 2:}
Figure~\ref{measurenorm} shows for some $\beta$, a schematic picture of the sets $\Delta_R(\alpha,\beta)$ and $Y_R(\alpha,\beta)$ inside $\Delta$. The simplex is drawn such that the vertex $e_\alpha$ is on the top and the opposite face $F(\alpha)$ is in the horizontal plane forming the floor of the simplex.
\begin{figure}[htb]
\begin{center}
\ \psfig{file=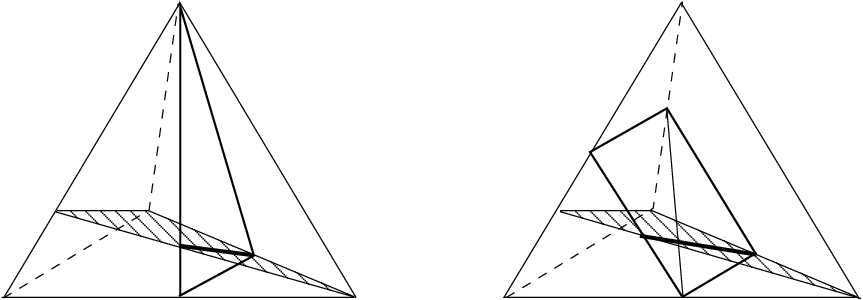, height=1.5truein} \caption{Schematic Picture} \label{measurenorm}
\end{center}
\setlength{\unitlength}{1in}
\begin{picture}(0,0)(0,0)
\put(-1.3,2.2){$e_\alpha$}\put(1.2,2.2){$e_\alpha$} \put(-2.2,0.5){$e_\beta$} \put(0.3,0.5){$e_\beta$} \put(-2.5,1.05){$e_R(\alpha,\beta)$}
\put(0,1.05){$e_R(\alpha,\beta)$}
\end{picture}
\end{figure}

The first picture is an instance when $\alpha$ is orientation preserving and the second picture is an instance when $\alpha$ is orientation reversing. Let $e_R(\alpha, \beta)$ be the point on the edge $[e_\alpha, e_\beta]$ whose convex combination satisfies $\lambda_\beta /\lambda_\alpha = R$. Let $F_R(\alpha,\beta)$ denote the convex hull of $e_R(\alpha, \beta)$ and the vertices of the simplex other than $e_\alpha$ and $e_\beta$. In the picture, it is represented by the plane that is shaded. Then $\Delta_R(\alpha,\beta)$ is the {\em wedge} in the simplex bounded by $F_R(\alpha,\beta)$ on the top and $F(\alpha)$ on the bottom i.e.~it is the region in the simplex below the shaded plane. As a result, $Y_R(\alpha,\beta)$ is the part of $W$ that lies below the intersection of the shaded plane and $W$, which is shown by the bold line segment in either picture.

The position of $e_R(\alpha,\beta)$ on the edge joining $e_\alpha$ and $e_\beta$ is determined by the value of $K$ and the ratio $\vert Q(\alpha) \vert / \vert Q(\beta) \vert$. The assumption $ \vert Q(\alpha) \vert > (1/C) \max_{\gamma \in \A} \vert Q(\gamma)\vert$ implies that the ratio $\vert Q(\alpha) \vert / \vert Q(\beta) \vert$ is bounded below by $1/C$. Using this lower bound, we see that the points in $\Delta_R(\alpha,\beta)$ and hence in $Y_R(\alpha,\beta)$ satisfy
\begin{equation}\label{thin}
\frac{\lambda_\beta}{\lambda_\alpha} \geq \frac{K}{C}
\end{equation}
This means that, for every $\beta$, the distance along the edge of the point $e_R(\alpha,\beta)$ from $e_\beta$ is bounded above by a quantity that depends only on $M$ and $C$. Moreover, this quantity goes to 0 as $M$ goes to infinity. In other words, each wedge is uniformly thin.

\subsection*{\it Step 3:} In this step, we triangulate $W$ by $(d-2)$-dimensional simplices $\Delta(i)$ in a specific way. Then to get the upper bound for measure relative to $\Gamma Q(W)$, it is enough to get the same bound for measure of the intersection with each simplex in the triangulation.

Recall from Section~\ref{dynamics}, each configuration space $W$ is a convex hull of a finite number of vertices, either of type $e_{\alpha \beta}$ or $e_\gamma$. Fix a triangulation of $W$ by $(d-2)$-dimensional simplices as follows:
\begin{enumerate}
\item Triangulate $\partial W$ without introducing new vertices. This can be done in any manner by adding an appropriate number of new diagonals.
\item Fix the vertex $p$ of $W$ where if $\alpha$ is orientation preserving then $p = e_\alpha$, and if $\alpha$ is orientation reversing then $p = e_{\alpha \gamma}$ for some $\gamma$. Cone off the triangulation of $\partial W$ to $p$ to get a triangulation of $W$.
\end{enumerate}
For example, in the second figure of Figure~\ref{measurenorm}, we have triangulated by adding in the diagonal of the quadrilateral. The number of simplices $\Delta(i)$ in a triangulation is bounded above by some number $n(d)$ that depends only on $d$.

Let the individual simplices in the triangulation of $W$ be $\Delta(i)$. By construction, each $\Delta(i)$ contains the vertex $p$ of $W$. To show the required bound for $\ell(\Gamma Q (Y_R(\alpha, \beta))$, it is enough to show that for $M$ large enough, the ratio $\ell \bigl(\Gamma Q(Y_R(\alpha,\beta) \cap \Delta(i))\bigr)/\ell\bigl(\Gamma Q(\Delta(i))\bigr)$ is bounded above by $1/2d^2$ for each $i$. Taking union over all $i$ implies the required bound for $\Gamma Q(Y_R(\alpha,\beta))$. So for the remainder of this proof, we will focus on one such $\Delta(i)$.

\subsection*{\it Step 4:} The intersection $Y_R(\alpha,\beta) \cap \Delta(i)$ is a half-space intersection $H \cap \Delta(i)$ with $\partial H = F_R(\alpha,\beta)$ (see Section~\ref{measure-theory}). In Inequality~\eqref{thin}, if $K/C > 1/2$, then the half-space intersection is opposite $p$. In this step, we shall show that $H \cap \Delta(i)$ opposite $p$ is $R/2$-thin. Hence we can use Lemma~\ref{halfspace} to include the half-space in a union of wedges in $\Delta(i)$. This reduces the task to showing a suitable upper bound for the ratio in each of these wedges.

The edges of $\Delta(i)$ with endpoint $p$ that have non-empty intersection with $F_R(\alpha,\beta)$ are the ones joining $p$ to a vertex $q$ of $\Delta(i)$ in $F(\alpha)$. Denote the set of vertices of $\Delta(i)$ in $F(\alpha)$ by $S_H$ and for each vertex $q \in S_H$, let $a_q$ denote the point of intersection $F_R(\alpha,\beta) \cap [p,q]$. Since $a_q \in F_R(\alpha, \beta)$, it's convex combination in the full simplex $\Delta$ satisfies
\begin{equation}\label{aqfull}
\frac{\lambda_\beta}{\lambda_\alpha} = R
\end{equation}
Let $a_q$ have the convex combination $a_q= \lambda_p p + \lambda_q q $ in $\Delta(i)$. Write
\begin{equation}\label{linearform}
p = \frac{e_\alpha + e_\gamma}{2} \hskip 10pt , \hskip 10pt q = \frac{e_\beta + e_\eta}{2}
\end{equation}
where $\gamma = \alpha$ or $\gamma \neq \alpha$ depending on whether $\alpha$ is orientation preserving or orientation reversing, and similarly $\eta= \beta$ or $\eta \neq \beta$. This means that $\lambda_\alpha = \lambda_p$ or $\lambda_\alpha= \lambda_p/2$, and similarly $\lambda_\beta = \lambda_q$ or $\lambda_\beta = \lambda_q/2$. In any case, combining this with Equation~\eqref{aqfull} implies that convex combinations in $\Delta(i)$ of the points on the line segment joining $a_q$ to $q$ satisfy
\[
\frac{\lambda_q}{\lambda_p} \geq \frac{R}{2}
\]
Thus our half-space intersection with $\Delta(i)$ opposite $p$ is $R/2$-thin. By applying Lemma~\ref{halfspace} to the intersection $Y_R(\alpha, \beta) \cap \Delta(i)$, we get for $R_0 = R/2(d-1)$, the inclusion
\begin{equation}\label{inclusion1}
Y_R(\alpha,\beta) \cap \Delta(i) \subset \bigcup_{q \in S_H} \Delta(i)_{R_0}(p,q)
\end{equation}
Consider the vectors $Qp, Qq \in \R^\A$. Using Equation~\eqref{linearform}, the ratio $\vert Qp \vert / \vert Qq \vert$ satisfies
\[
\frac{\vert Qp \vert}{\vert Qq \vert} =  \frac{\vert Q(\alpha) \vert+ \vert Q(\gamma)\vert}{ \vert Q(\beta)\vert+ \vert Q(\eta) \vert} < (C+1)\frac{\vert Q(\alpha) \vert}{\vert Q(\beta) \vert}
\]
So if we set
\[
R_1 = \left( \frac{K}{4dC} \right) \frac{\vert Qp \vert}{\vert Qq \vert}
\]
then $R_0 > R_1$. This implies the inclusion
\[
\Delta(i)_{R_0}(p,q) \subset \Delta(i)_{R_1}(p,q)
\]
for each $q \in S_H$. Combining it with inclusion~\eqref{inclusion1}, we get
\[
Y_R(\alpha,\beta) \cap \Delta(i) \subset \bigcup_{q \in S_H} \Delta(i)_{R_1}(p,q)
\]
The number of simplices $\Delta(i)$ is at most $n(d)$. So to establish the required upper bound on $\ell\bigl(\Gamma Q(Y_R(\alpha,\beta)\cap \Delta(i))\bigr)$, it is enough to show that there is a choice of $M$ and consequently of $R_1$ such that
\begin{equation} \label{estimate}
\ell\bigl(\Gamma Q(\Delta(i)_{R_1}(p,q))\bigr) \leq \frac{1}{2d^2 n(d)} \ell\bigl(\Gamma Q(\Delta(i))\bigr)
\end{equation}

\subsection*{\it Step 5:}  Equation~\eqref{wedge-measure} applies to projective linear maps with non-negative entries of the standard $(d-1)$-dimensional simplex into itself. So to apply Equation~\eqref{wedge-measure} to get Estimate~\eqref{estimate}, we need to extract such a map $\Gamma \widehat{Q}:\Delta_{d-2} \to \Delta_{d-2}$ from the restriction of $\Gamma Q$ to $\Delta(i)$. This creates two issues: First, to extract the map $\Gamma \widehat{Q}$, we have to identify $\Delta(i)$ with $\Delta_{d-2}$; this introduces a distortion in the standard measure on $\Delta_{d-2}$. So we need to show that this distortion is universally bounded. Second, we need to show that the Jacobian $\J(\Gamma Q)$ restricted to $\Delta(i)$ is, up to a universal constant, the same as the full Jacobian $\J_{\Delta_{d-2}}(\Gamma \widehat{Q})$ of the extracted map. In the final step, we show how to do this.

Recall from Section~\ref{dynamics} that the volume forms on configuration spaces are normalized so that the volume of each configuration space is 1. Hence, up to the ratios of the configuration space volumes, for a measurable subset $X$ of $\Gamma Q(W)$
\begin{equation}\label{change-var}
\ell(X) \approx \int_{W} \J(\Gamma Q)(\y) \chi_{Y}(\y) d \y
\end{equation}
where $\chi_Y$ is the characteristic function of the set $Y = (\Gamma Q)^{-1}(X)$. Using~\eqref{change-var}, we get
\[
\frac{\ell\bigl(\Gamma Q(\Delta(i)_{R_1}(p,q))\bigr)} {\ell\bigl(\Gamma Q(\Delta(i))\bigr)} = \frac{ \int\limits_{\Delta(i)_{R_1}(p,q)} \J(\Gamma Q)(\y) d
\y}{\int\limits_{\Delta(i)}\J(\Gamma Q)(\y) d \y}
\]
Recall from Section~\ref{Jacobian}, the expression for the Jacobian of the restriction to $W$.
\[
\J(\Gamma Q)(\y) = \frac{1}{a\vert Q\y \vert^{d-1}}
\]
So
\begin{equation}\label{ratio}
\frac{\ell\bigl(\Gamma Q(\Delta(i)_{R_1}(p,q))\bigr)} {\ell\bigl(\Gamma Q(\Delta(i))\bigr)} = \frac{\int\limits_{\Delta(i)_{R_1}(p,q)}
\frac{1}{\vert Q \y \vert^{d-1}} d\y}{\int\limits_{\Delta(i)} \frac{1}{\vert Q \y \vert^{d-1}} d\y}
\end{equation}
In Equation~\eqref{ratio}, the integrand looks similar to the Jacobian of a determinant 1 projective linear map, with non-negative entries, of a standard $(d-2)$-dimensional simplex $\Delta_{d-2}$. We indicate how to make this observation precise and then use it to bound the ratio from above.

Let $A_0$ be the subspace in $\R^\A$ spanned by $W_0$ and let $A$ be the subspace in $\R^\A$ spanned by $W$. Thus $A_0$ and $A$ are subspaces of $\R^\A$ satisfying the equations that define $W_0$ and $W$ respectively. Fix some linear isomorphism $F: \R^{d-1} \to A_0$ such that $F(\Delta_{d-2})$ contains $W_0$. Also, up to a permutation of the standard basis in $\R^{d-1}$, there is a unique linear isomorphism $G(i):\R^{d-1} \to A$ such that $G(i)(\Delta_{d-2}) = \Delta(i)$. Use the identification $G(i)$ to label the vertices of $\Delta_{d-2}$ by the corresponding labels of $\Delta(i)$.

Using the map $F$ and the identifications $G(i)$, we define a linear map $\widehat{Q} = F^{-1} \circ Q \circ G(i)$. Since the projectivization $\Gamma  \widehat{Q}$ maps $\Delta_{d-2}$ into itself, the map $\widehat{Q}$ has non-negative entries.

Since there are finitely many configuration spaces and finitely many simplices in the triangulation of each configuration space, the distortion by $G(i)$ of the standard Lebesgue measure on $\Delta_{d-2}$ is bounded above by a constant that depends only on $d$. This means that for a point $\y \in \Delta_{d-2}$, the quantity $1 / \vert QG(i)\y \vert^{d-1}$ (the integrand on the right in~\eqref{ratio}) is the same as the full Jacobian $\J_{\Delta_{d-2}}(\Gamma \widehat{Q})(\y) = 1 / \vert \widehat{Q}\y \vert^{d-1}$, up to a constant that depends only on $d$. This also means that up to a universal constant, the norm of the $r$-th column $\vert \widehat{Q}(r) \vert$ is the same as $\vert Qr \vert$.

The restriction $\Delta(i)_{R_1}(p,q)\to \Gamma Q(\Delta(i)_{R_1}(p,q))$ corresponds to $\Gamma (\widehat{Q}E):\Delta_{d-2} \to \Delta_{d-2}$ where $E$ is the elementary matrix whose off-diagonal entry in the $(q,p)$ place is $R_1$.

Applying Corollary~\ref{elementary}, we get
\begin{eqnarray*}
\frac{\ell\bigl(\Gamma Q (\Delta(i)_{R_1}(p,q))\bigr)}{\ell\bigl(\Gamma Q(\Delta(i))\bigr)} &\approx& \frac{ \vert \widehat{Q}(p) \vert}{ \vert \widehat{Q}(p) \vert + R_1 \vert \widehat{Q}(q) \vert}\\
&=& \vert \widehat{Q}(p) \vert \left[ \vert \widehat{Q}(p) \vert + \left( \frac{K}{4dC} \right) \frac{\vert Qp \vert \vert \widehat{Q}(q) \vert}{\vert Qq \vert}\right]^{-1}\\
&\approx& \frac{4dC}{4dC+K} < \frac{4dC}{K}
\end{eqnarray*}
Finally, if $M$ is large enough such that $K > 8d^3 n(d)C$, then we get Estimate~\eqref{estimate}; moreover it is easily checked that as $M \to \infty$, $K \to \infty$ and so the second part of the Proposition~\ref{measure-norm} also follows.
\end{proof}

In the next lemma, we use the technique in Step 5 of Proposition~\ref{measure-norm} to estimate from below, the probability of certain splits. This shall be useful later in Proposition~\ref{inductive-step}.

\begin{lemma}\label{split-estimate}
Let $\gamma$ and $\sigma$ be the bands in critical positions at some stage $\pi$. Let $C>1$ be a constant such that
\[
\vert Q(\gamma) \vert > \frac{1}{C} \max_{\alpha \in \A} \vert Q(\alpha) \vert
\]
Then up to a universal constant depending only on $d$, the probability that $\gamma$ splits $\sigma$ is bounded below by the quantity $1/n(d)(2C)^d $.
\end{lemma}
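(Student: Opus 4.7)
The probability that $\gamma$ splits $\sigma$ at $\pi_1$ is the ratio
\[
P = \frac{\ell(J(QE)(W'))}{\ell(JQ(W))} = \frac{\ell(JQ(W \cap \Delta_1(\sigma,\gamma)))}{\ell(JQ(W))},
\]
where $E = I + M_{\gamma\sigma}$ and $\Delta_1(\sigma,\gamma) = \{\lambda_\gamma \geq \lambda_\sigma\}$ is precisely the wedge on which this split occurs. The plan is to isolate an explicit sub-region of $W \cap \Delta_1(\sigma,\gamma)$ inside a single simplex of a triangulation of $W$, and estimate its $JQ$-measure using the triangulation and $\Delta_{d-2}$-identification apparatus developed in Steps 3 and 5 of Proposition~\ref{measure-norm}.

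Triangulate $W$ into at most $n(d)$ simplices $\Delta(i)$ as in Step 3, with the cone vertex $p$ chosen near $\gamma$: take $p = e_\gamma$ if $\gamma$ is orientation preserving and $p = e_{\gamma\gamma'}$ for some partner $\gamma' \in S_t$ (preferably $\gamma' \neq \sigma$) otherwise. The hypothesis $|Q(\gamma)| > (1/C)\max_\alpha|Q(\alpha)|$ then gives $|Qp| \geq |Q(\gamma)|/2$, so $|Qp|$ is comparable to the maximum column norm up to a factor $2C$. Select $\Delta(i_0)$ maximizing $\ell(JQ(\Delta(i)))$, so $\ell(JQ(\Delta(i_0))) \geq \ell(JQ(W))/n(d)$. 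Via $G(i_0)\colon \Delta_{d-2}\to\Delta(i_0)$ from Step 5, transport $JQ|_{\Delta(i_0)}$ to $J\widehat Q_{i_0}$ on $\Delta_{d-2}$, with column norms $|\widehat Q_{i_0}(\hat v)|$ equivalent to $|Qv|$ up to a universal constant in $d$.

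Inside $\Delta(i_0)$, consider the sub-simplex $S$ defined by $\mu_p \geq c_0$ in the convex combination representation, where $c_0 = 1/2$ in the orientation preserving case and $c_0 = 2/3$ in the orientation reversing case with $\gamma' \neq \sigma$; a direct check shows $\lambda_\gamma \geq \lambda_\sigma$ throughout $S$, so $S \subseteq \Delta_1(\sigma,\gamma)$. Applying Lemma~\ref{measure-plm} to $\hat S$ and to $\Delta_{d-2}$ separately yields
\[
\frac{\ell(J\widehat Q_{i_0}(\hat S))}{\ell(J\widehat Q_{i_0}(\Delta_{d-2}))} = (1-c_0)^{d-2} \prod_{j=1}^{d-2}\frac{1}{c_0 R_j + (1-c_0)}, \qquad R_j = \frac{|\widehat Q_{i_0}(\hat p)|}{|\widehat Q_{i_0}(\hat q_j)|},
\]
up to universal constants, where $\hat q_j$ ranges over the non-$p$ vertices of $\Delta_{d-2}$. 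If $R_j \leq 2C$ for all $j$, each factor is at least $1/(2C+1)$, and combining the bound $(1/(2(2C+1)))^{d-2}$ with the factor $1/n(d)$ from the choice of $\Delta(i_0)$ produces the claimed estimate $P \geq 1/(n(d)(2C)^d)$ up to a universal constant depending only on $d$.

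The main obstacle is justifying the uniform bound $R_j \leq 2C$, equivalent to $|Qp| \leq 2C|Qq_j|$ for every non-$p$ vertex of $\Delta(i_0)$; the hypothesis provides only the reverse inequality, so a priori some $q_j$ could carry a very small column norm. Controlling this requires either refining the triangulation to exclude vertices of small column norm, or carefully analyzing their contribution to $\ell(JQ(\Delta(i_0)))$ and showing that the dominant simplex can be taken to avoid them. The orientation-reversing subcase with $\gamma' = \sigma$ forced (when there is exactly one orientation reversing band on the top) requires an alternative target sub-region, since the straight-line segments from $p = e_{\gamma\sigma}$ to other vertices of $\Delta(i_0)$ need not remain in $\Delta_1(\sigma,\gamma)$.
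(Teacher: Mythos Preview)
Your proposal is incomplete, and you have correctly put your finger on the gap: there is no reason for the non-$p$ vertices $q_j$ of the dominant simplex $\Delta(i_0)$ to have column norms comparable to the maximum. In fact the whole point of the lemma is to handle stages where some columns are far smaller than others, so some $|Qq_j|$ may be tiny and $R_j$ enormous. No refinement of your triangulation of $W$ will repair this, because any simplex of full dimension in $W$ must use some of these vertices.

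The paper's argument avoids this obstacle by a cancellation trick. Instead of triangulating $W$ and trying to lower-bound the measure of a piece of the dominant simplex, one splits $W_1 = X \cup X'$ along the hyperplane $\lambda_\gamma = \lambda_\sigma$, where $X$ is the piece on which $\gamma$ splits $\sigma$ and $X'$ is the complement. One then triangulates $X'$, selects the dominant simplex $\Delta'(i_0)$ there, and observes that its vertices lying on the separating set $Z = \partial X \cap \partial X'$ are also vertices of some simplex $\Delta(j)$ in a triangulation of $X$. Applying Lemma~\ref{measure-plm} to both $\Delta(j)$ and $\Delta'(i_0)$ via the $\Delta_{d-2}$ identification, the ratio $\ell(JQ_1(\Delta(j)))/\ell(JQ_1(\Delta'(i_0)))$ becomes a product of $|Q_1 p|/|Q_1 q|$ over paired vertices, and the vertices in $Z$ cancel between numerator and denominator. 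The remaining vertices $p \in V'(i_0)\setminus V_Z$ all have the form $(e_\gamma + e_\tau)/2$ and hence satisfy $|Q_1 p| > |Q_1(\gamma)|/2 > (1/2C)\max_\alpha |Q_1(\alpha)|$, while the remaining vertices $q \in V(j)\setminus V_Z$ are bounded above by $\max_\alpha |Q_1(\alpha)|$. Thus each surviving factor is at least $1/(2C)$, and there are at most $d-1$ of them. This yields $\ell(JQ_1(X))/\ell(JQ_1(X')) \succ (1/n(d))(1/2C)^{d-1}$ and hence the claimed lower bound on the probability.

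The moral is that the small-norm vertices you were worried about are shared between $X$ and $X'$; comparing the two pieces to each other rather than comparing a piece to the whole eliminates them from the estimate. Your orientation-reversing subcase with $\gamma' = \sigma$ is also handled automatically, since the paper's decomposition is into $X$ and $X'$ rather than a cone from a fixed $p$.
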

\begin{proof}
Let $\pi'$ be the generalized permutation that results from $\gamma$ splitting $\sigma$. Let $W$ and $W'$ denote the configuration spaces defined by $\pi$ and $\pi'$.

\begin{figure}[htb]
\begin{center}
\ \psfig{file=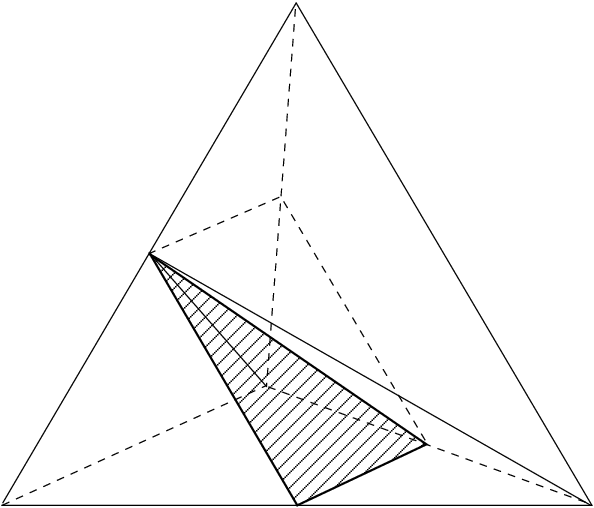, height=1.5truein} \caption{$\gamma$ splits $\sigma$} \label{scheme}
\end{center}
\setlength{\unitlength}{1in}
\begin{picture}(0,0)(0,0)
\put(0,2.2){$e_\gamma$} \put(-1.05,0.5){$e_\sigma$} \put(-0.25,1.3){$X$} \put(0.1,0.8){$X'$} \put(-0.7,1.35){$e_{\sigma \gamma}$}
\end{picture}
\end{figure}
A schematic picture is shown in Figure~\ref{scheme} with the shaded part $X'$ in $W$ representing the split. In other words, the shaded part $X'$ is the image of $W'$ under the projectivization of the elementary matrix associated to the split. Similarly, let $X$ be $W  \setminus X'$; this corresponds to $\sigma$ splitting $\gamma$ instead. Except when $\gamma$ is the only orientation reversing band on its side, $X$ is non-empty. When $X$ is empty, the split occurs with probability 1; so we focus on the split in which $X$ is non-empty. Let $Z$ denote $\partial X' \cap \partial X$ i.e.~the part of the boundary that separates the two sets.

The vertices of $W$ fall into the following categories:
\begin{enumerate}
\item If a vertex has the linear combination $(1/2)e_\sigma+ (1/2)e_\tau$ with $\tau \neq \gamma$, then it lies in $X' \setminus Z$.
\item If a vertex has the linear combination $(1/2)e_\gamma + (1/2)e_\tau$ with $\tau \neq \sigma$, then it lies in $X \setminus Z$.
\item All other vertices of $W$ lie in $Z$.
\end{enumerate}

Just as in Proposition~\ref{measure-norm}, let $\Delta(i)$ be the triangulation of $X$. Denote the set of vertices of $\Delta(i)$ by $V(i)$. Let $k$ be the index among all $i$, for which the measure $\ell\bigl(\Gamma Q (\Delta(k))\bigr)$ is the maximum. The triangulation has at most $n(d)$ simplices. Hence we get the estimate
\[
\ell\bigl(\Gamma Q (X)\bigr) \leq n(d) \ell\bigl(\Gamma Q (\Delta(k))\bigr) \\
\]
Let $V_Z$ be the subset of vertices of $\Delta(k)$ that lie in $Z$. Construct any triangulation of $X'$ such that there is a simplex $\Delta'(j)$ for which the points in $V_Z$ are vertices. Then
\[
\frac{\ell\bigl(\Gamma Q (X')\bigr)}{\ell\bigl(\Gamma Q(X)\bigr)} > \biggl(\frac{1}{n(d)}\biggr)\frac{\ell \bigl(\Gamma Q(\Delta'(j))\bigr)}{\ell \bigl(\Gamma Q(\Delta(k))\bigr)}
\]
So it is enough to show a lower bound for the right side in the equation above. Let $V'(j)$ denote the set of vertices of $\Delta'(j)$. Associated to the simplices $\Delta(k)$ and $\Delta'(j)$, we define maps $\widehat{Q_k}= F^{-1} \circ Q \circ G(k)$ and $\widehat{Q_j}= F^{-1}\circ Q \circ G'(j)$ from the standard $(d-2)$-dimensional simplex into itself. See Step 5 of Proposition~\ref{measure-norm} for the definition of these maps. The gist of the discussion in that step is that up to a universal constant introduced in the identifications,
\begin{equation}\label{hat}
\frac{\ell \left(\Gamma Q(\Delta'(j))\right)}{\ell \left(\Gamma Q(\Delta(k))\right)} \approx \frac{\ell \left(\Gamma \widehat{Q_j}(\Delta_{d-2})\right)}{\ell \left(\Gamma \widehat{Q_k}(\Delta_{d-2})\right)}
\end{equation}
There is a unique linear isomorphism $G(j k)$ of $\R^{d-1}$ such that $\widehat{Q_k} = \widehat{Q_j} \circ G(j k)$. Since the number of identification maps $G(*)$ is finite, the number of linear isomorphisms that relate them is also finite. Hence the ratio of the determinants of $\widehat{Q_*}$ is universally bounded. This bound can be incorporated in applying Lemma~\ref{measure-plm} to evaluate the right hand side in~\eqref{hat}. Recall that the norm of the $r$-th column $\widehat{Q_*}(r)$ is up to a universal constant the same as the norm of the vector $Qr$ in $\R^\A$. In conclusion,
\begin{equation}\label{hat2}
\frac{\ell \left(\Gamma \widehat{Q_j}(\Delta_{d-2})\right)}{\ell \left(\Gamma \widehat{Q_k}(\Delta_{d-2})\right)} \approx \frac{\prod\limits_{p \in V(k)} \vert Q p \vert}{\prod\limits_{q \in V'(j)} \vert Q q \vert} = \frac{\prod\limits_{p \in V(k) \setminus V_Z} \vert Q p \vert}{\prod\limits_{q \in V'(j) \setminus V_Z} \vert Q q \vert}
\end{equation}
In~\eqref{hat2}, the number of vertices in the numerator is exactly equal to the number of vertices in the denominator. So the terms in the products in the numerator and the denominator pair off. In addition, any vertex $p$ in $V(k) \setminus V_Z$ is a vertex in $X \setminus Z$ i.e.~in our categorization, the vertex belongs to category (2). Hence it's linear combination is $p = (1/2) e_\gamma+ (1/2) e_\tau$, and so it satisfies $\vert Q p \vert > \vert Q (\gamma) \vert/2$. Hence for a vertex $q$ in $V'(j) \setminus V_Z$, we have the estimate
\[
\frac{\vert Q p \vert}{\vert Q q\vert} > \frac{\vert Q(\gamma) \vert /2}{\max_{\alpha \in  \A} \vert Q(\alpha)
\vert} > \frac{1}{2C}
\]
Using the estimate in~\eqref{hat2} gives the lower bound
\[
\frac{\ell \left(\Gamma \widehat{Q_j}(\Delta_{d-2})\right)}{\ell \left(\Gamma \widehat{Q_k}(\Delta_{d-2})\right)} \succ \biggl( \frac{1}{2C} \biggr)^{d-1}
\]
Thus from~\eqref{hat} we get the lower bound for the probability of the split
\[
\frac{\ell\bigl(\Gamma Q (X)\bigr)}{\ell\bigl(\Gamma Q(W)\bigr)} \succ \frac{1}{1+n(d)(2C)^{d-1}} > \frac{1}{n(d) (2C)^d}
\]
\end{proof}

The next proposition, which goes back to Kerckhoff \cite{Ker}, has the following idea: One starts off by arranging the columns $Q(\alpha)$ in the order of decreasing norm and then picks out a $C_0$-distributed subset $\A_0$ starting with the column with the largest norm and following the order. Then the proposition shows that there is a definite probability, independent of the stage, that some future stage has a larger subset $\A_1 \supsetneq \A_0$ that is $C_1$-distributed and contains the column with the largest norm, with the constant $C_1$ independent of the stage. Thus the proposition can be used as an iterative step which we iterate over a finite number of times to get, with a definite probability, a future stage that is $C$-distributed. The precise statement of the proposition here is identical to 4.4 Lemma 5 in Yoccoz \cite{Yoc}, except that it is formulated for non-classical interval exchanges.

\begin{proposition}\label{inductive-step}
Let $C_0 > 1$ be a constant and let $\A_0 \subsetneq \A$ be a set of $C_0$-distributed columns which contains the largest column in norm. Then there exists constants $c_1 \in (0,1)$ and $C_1 > 1$, depending only on $C_0$ and $d$, and a finite set of future stages $\pi_\theta$, after $\pi$, that satisfy the following two conditions:
\begin{enumerate}
\item Denote the matrix corresponding to each $\pi_\theta$ by $Q_\theta$ and the configuration space by $W_\theta$. The sets $\Gamma Q_\theta(W_\theta)$ have disjoint interiors and
\[
\sum_\theta \ell(\Gamma Q_\theta(W_\theta)) \geq c_1 \ell(\Gamma Q(W))
\]
\item For every $\theta$, there exists a set of $C_1$-distributed columns $\A_\theta \supsetneq \A_0$ that contains the largest column at the stage $\pi_\theta$.
\end{enumerate}
\end{proposition}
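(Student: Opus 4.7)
The proof follows Kerckhoff's strategy for classical interval exchanges, modified to cope with the two obstacles described in Section~\ref{example}. The plan is to exhibit a finite family of future stages $\pi_\theta$, each terminating a distinct splitting prefix starting at $\pi$, such that at $\pi_\theta$ one of two \emph{good events} has occurred: either (a) some band $\beta \notin \A_0$ has become the column of largest norm, or (b) some band $\gamma \in \A_0$ has moved a band $\beta \notin \A_0$. Disjointness of the interiors of $JQ_\theta(W_\theta)$ is automatic from the prefix property, so the work is in establishing the measure lower bound~(1) and the $C_1$-distribution~(2).

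For~(2), I would set $C_1$ to be a polynomial function of $C_0$ and the norm-growth threshold $M = M(C_0,d)$ supplied by Proposition~\ref{measure-norm}, and in either case define $\A_\theta = \A_0 \cup \{\beta\}$. Since $\A_0$ is $C_0$-distributed and contains the largest column, Proposition~\ref{measure-norm} applies to each $\alpha \in \A_0$; conditioning on the event that no $\A_0$-column inflates by more than $M$ between $\pi$ and $\pi_\theta$ loses at most half the measure of $JQ(W)$ by a union bound, and a similar conditioning can be applied to columns outside $\A_0$. On this event, in case~(a) the new largest column $|Q_\theta(\beta)|$ is at most twice the previous largest (one split adds one column), while columns in $\A_0$ only grow, so $|Q_\theta(\beta)|$ lies within a stage-independent multiplicative ratio of every $|Q_\theta(\alpha)|$ for $\alpha \in \A_0$. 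In case~(b), the column-update formula gives $|Q_\theta(\beta)| = |Q(\beta)| + |Q(\gamma)| \geq |Q(\gamma)|$, and the inflation bound produces the matching upper bound.

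The main obstacle is the lower bound~(1): one must rule out the scenario in which the expansion avoids both good events on a set of non-negligible measure. In Kerckhoff's classical argument this is handled by showing that repeated avoidance forces $\A_0$ to be an isolated block, which irreducibility of classical i.e.m. forbids. For non-classical interval exchanges, isolated blocks genuinely arise, but always through a combinatorial reduction of the current stage restricted to the bands in $\A_0$. By irreducibility of non-classical interval exchanges, the corresponding measure reduction~\eqref{red-constraint} cuts out only a codimension-one subset of $W$, and Lemma~\ref{split-estimate} provides a stage-independent lower bound on the probability of each individual split in which a critical-position band of $\A_0$ confronts a band outside $\A_0$. Summing these bounds over the finitely many combinatorial possibilities for the purported block, and iterating Proposition~\ref{measure-norm} to keep norms under control in the interim, produces the stage-independent constant $c_1 = c_1(C_0,d) > 0$. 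This is the step labelled ``Step~1'' in the statement and is where the proof departs genuinely from the classical theory.
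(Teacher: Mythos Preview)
Your overall plan matches the paper's: identify the two good events, control $\A_0$-column norms via Proposition~\ref{measure-norm}, and show the good events occur with definite probability. However, the mechanism you sketch for the measure lower bound is not the one the paper uses, and as written it is incomplete. The paper does not argue via isolated blocks and measure reductions. Its Step~1 is a \emph{navigation} step: since the sink $G_0$ is finite, from any node there is a splitting sequence of length at most $h = h(d)$ that brings any prescribed band to a critical position. One follows the shortest prefix $\iota$ of such a sequence until some band outside $\A_0$ first lands in a critical position; along $\iota$ every split involves only $\A_0$-bands, so Lemma~\ref{split-estimate} applies to each (every $\A_0$-column being within $C_0 C_1'$ of the maximum), yielding the lower bound on $\ell(JQ_\iota(W_\iota))$. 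Only then is Proposition~\ref{measure-norm} invoked. The key structural observation, which replaces your isolated-block analysis entirely, is that once a band $\beta \notin \A_0$ occupies a critical position it cannot leave unless an $\A_0$-column moves it---which is precisely event~(b). So from $\pi_\iota$ one simply waits for the first time an $\A_0$-band is split; at that moment (or earlier, if event~(a) has triggered) one stops and has the required stage $\pi_\theta$. No summation over combinatorial reductions is needed.

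Two smaller corrections: in case~(b) the update is $|Q_\theta(\beta)| = |Q_{\theta-1}(\beta)| + |Q_{\theta-1}(\alpha)|$ with norms taken at the stage just before $\pi_\theta$, not at $\pi$ (outside bands may have moved each other in the interim, so $|Q_{\theta-1}(\beta)|$ can exceed $|Q(\beta)|$; the bound still works because event~(a) has not triggered). And Proposition~\ref{measure-norm} cannot be applied to columns outside $\A_0$ as you suggest, since its hypothesis requires the column in question to be within a fixed factor of the largest---this is exactly why the paper needs the preliminary navigation to $\pi_\iota$ rather than conditioning on all columns at once.
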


\begin{proof}
We recall the basic idea of the proof from Section~\ref{example}: Before the norms of columns in $\A_0$ increase by some factor, suppose one of the following happens: either a column outside $\A_0$ becomes the column with the largest norm or a column in $\A_0$ moves a column outside. At this point, if we let $\A_1$ be $\A_0$ union this outside column, then as we shall see, the new collection is $C_1$-distributed where $C_1$ depends only on the initial constants. Then it remains to show that with a definite probability, one of the two events happens.

The proof here follows the proof of Lemma 5 of Section 4.4 in Yoccoz \cite{Yoc} closely, except that the individual steps require justification for non-classical interval exchanges. The steps are as follows:

\subsection*{\it Step 1:} To make it possible for a column in $\A_0$ to move a column outside, a column outside $\A_0$ needs to land in one of the critical positions. In this step, we show that there is a definite probability that this happens before the maximum of the norms of the columns increases by a definite factor. Precisely, we claim that $\pi$ can be split to a future stage $\pi_\iota$ such that, for constants $c_1' \in (0,1)$ and $C_1' > 1$ depending only on $C_0$ and $d$, we have
\begin{equation}\label{upper}
\max_{\alpha \in \A} \vert Q_\iota(\alpha) \vert < C_1' \max_{\alpha \in \A} \vert Q(\alpha) \vert
\end{equation}
\begin{equation}\label{lower}
\ell(\Gamma Q_\iota(W_\iota)) > c_1' \ell(\Gamma Q(W))
\end{equation}
and at least one of the bands in the critical positions in $\pi_\iota$ does not belong to $\A_0$.
\begin{proof}
By Proposition~\ref{critical}, for each node $\pi$ in the attractor and for each band $\beta$ in $\A$, there are splitting sequences which bring an end of $\beta$ to a critical position. For each pair $(\pi, \beta)$, choose one such splitting sequence. The number of chosen sequences is equal to $d$ times the number of nodes in the attractor. This means that there is an upper bound $h$, depending only on $d$, on the lengths of all chosen sequences.

For some band $\beta$ in $S \setminus \A_0$, let $\iota(\beta)$ be the chosen splitting sequence starting from $\pi$ that brings $\beta$ into one of the critical positions. Let $\iota$ be the shortest prefix of $\iota(\beta)$, for which a band not in $\A_0$ is in a critical position.

By the above discussion, the length of $\iota$ is at most $h$. This implies that the norm of a column in $\A_0$ participating in $\iota$ can increase by a factor of at most $2^h$. So we get the estimate
\[
\max_{\alpha \in \A_0} \vert Q_\iota(\alpha) \vert < 2^h \max_{\alpha \in \A} \vert Q(\alpha) \vert
\]
Thus~\eqref{upper} holds with $C_1' = 2^h$.

To show the bound~\eqref{lower} for the measures, it is enough to show that every split in $\iota$ has relative probability bounded below by a constant that depends only on $C_0$ and $d$.

Suppose we are at some intermediate stage $\pi_1$ of $\iota$. Following $\pi_1$, suppose a band $\gamma$ in $\A_0$ splits a band $\sigma$ in $\A_0$ to give us the next stage $\pi_2$. The column $Q_1(\gamma)$ satisfies the estimate
\[
\vert Q_1(\gamma) \vert > \vert Q(\gamma) \vert > \frac{1}{C_0}\max_{\alpha \in \A} \vert Q(\alpha) \vert > \frac{1}{C_0C_1'} \max_{\alpha \in \A} \vert Q_1(\alpha) \vert
\]
By Lemma~\ref{split-estimate}, the probability of this split is bounded below by $1/n(d)(2C_0C_1')^d$, up to a universal constant that depends only on $d$. Finally, using the fact that the length of $\iota$ is bounded above by $h$, we get
\[
\ell\bigl(\Gamma Q_\iota(W_\iota)\bigr) \succ \frac{1}{n(d)^h(2C_0C_1')^{dh}}\ell\bigl(\Gamma Q(W)\bigr)
\]
So setting
\[
c_1' \approx \frac{1}{n(d)^h(2C_0C_1')^{dh}}
\]
proves Step 1.
\end{proof}

\subsection*{\it Step 2:} Notice that for every $\alpha \in \A_0$, we have $ \vert Q_\iota(\alpha) \vert \geq (1/C_1')\max_{\gamma \in \A} \vert Q_\iota(\gamma) \vert$. So we can apply Proposition~\ref{measure-norm} to the stage $\pi_\iota$. Let
\[
X_{M,\alpha} = \{ \x \in \Gamma Q_\iota(W_\iota): \vert Q_{n(\x,\alpha)}(\alpha) \vert > M \vert Q_\iota(\alpha) \vert \}
\]
By Proposition~\ref{measure-norm}, there exists $M > 1$, depending on $C_1'$ and $d$ such that $\ell(X_{M,\alpha}) < (1/2d) \ell(\Gamma Q_\iota(W_\iota))$. If we let
\[
X = \Gamma Q_\iota(W_\iota) \setminus \bigcup_{\alpha \in \A_0} X_{M,\alpha}
\]
then we have the estimate
\begin{equation}\label{E}
\frac{\ell(X)}{\ell(\Gamma Q_\iota(W_\iota))} \geq 1 - \sum_{\alpha \in \A_0} \frac{\ell(X_{M,\alpha})}{\ell(\Gamma Q_\iota(W_\iota))} > 1 - \sum_{\alpha \in \A_0} \frac{1}{2d} > \frac{1}{2}
\end{equation}
From this point on, the rest of the proof follows Yoccoz \cite{Yoc} almost verbatim. We include it here for completeness.

\subsection*{\it Step 3:} Since at $\pi_\iota$, a band outside $\A_0$ is in a critical position, it is not possible to get a stage in which both bands in the critical positions are in $\A_0$ unless a column in $\A_0$ moves a column outside. This observation can be exploited to show that with a definite probability, either a band outside $\A_0$ becomes large enough in norm or a band in $\A_0$ moves a band outside.

For each $\x \in X$, let $\pi_{\x,m}$ denote the stage after $\pi_\iota$ such that $\pi_{\x,m-1} \to \pi_{\x,m}$ is the first instance when a band in $\A_0$ is split. To each $\x \in X$, we associate a stage $\pi(\x)$ in the path $\pi_\iota \to \cdots \to \pi_{\x,m}$ as follows:
\subsection*{\it Case (1):} If at some intermediate stage $\pi'$
\begin{equation}\label{e1}
\vert Q'(\beta) \vert \geq \max_{\alpha \in \A} \vert Q_\iota(\alpha) \vert
\end{equation}
for some $\beta \in \A \setminus \A_0$, then we let $\pi(\x)$ be the first instance when Inequality~\eqref{e1} is true for some band in $S \setminus \A_0$. This also means that in $\pi(\x)$, there is a unique band $\beta \in \A \setminus \A_0$ that satisfies Inequality~\eqref{e1}.
\subsection*{\it Case (2):} If no band in $S \setminus \A_0$ satisfies Inequality~\eqref{e1} at any intermediate stage, we set $\pi(\x) = \pi_{\x,m}$. Here a band $\beta \in \A \setminus \A_0$ splits a band $\alpha \in \A_0$ in the final split before $\pi(\x)$. So
\begin{equation}\label{e2}
\vert Q_\theta(\beta) \vert = \vert Q_{\theta-1}(\beta) \vert + \vert Q_{\theta-1}(\alpha) \vert
\end{equation}

\subsection*{\it Step 4:} Given the estimate~\eqref{E}, it is possible to select a finite number of stages $\pi_\theta$ from the collection of stages $\pi(\x)$ constructed in Step 3 such that $\Gamma Q_\theta(W_\theta)$ have disjoint interiors and
\[
\ell \left( \cup_\theta \Gamma Q_\theta(W_\theta) \right) > \frac{1}{2} \ell(X)> \frac{1}{4} \ell( \Gamma Q_\iota(W_\iota))
\]
For each $\pi_\theta$ as above, let $\A_\theta$ be the set of $\gamma \in \A$ that satisfy
\begin{equation}\label{e3}
\vert Q_\theta(\gamma) \vert > C_0^{-1} \max_{\alpha \in \A_0} \vert Q(\alpha) \vert
\end{equation}

\begin{claim}
For each $\theta$, the collection $\A_\theta$ is strictly larger than $\A_0$ and is $C_0C_1'(1+M)$-distributed.
\end{claim}
\begin{proof}
The stage $\pi_\theta$ belongs to one of the two cases in Step 3. In both cases, the distinguished band $\beta$ is in $\A_\theta \setminus \A_0$.

Given the lower bound~\eqref{e3} in the definition of $\A_\theta$, it is enough to show that the largest column in norm has an appropriate upper bound. Because we are dealing with subsets of $X$, every band $\alpha \in \A_0$ satisfies
\begin{equation}\label{e4}
\vert Q_\theta(\alpha) \vert < M \vert Q_\iota(\alpha) \vert
\end{equation}
This implies
\begin{equation}\label{e5}
\max_{\alpha \in \A_0} \vert Q_\theta(\alpha) \vert < M \max_{\alpha \in \A_0} \vert Q_\iota(\alpha) \vert
\end{equation}
If the largest column at $\pi_\theta$ is in $\A_0$, then estimates~\eqref{upper},~\eqref{e3} and~\eqref{e5} imply that $\A_\theta$ is $C_0C_1'M$-distributed. So it is also $C_0C_1'(1+M)$-distributed and we are done. Hence to finish the proof, we shall assume that the largest column is not in $\A_0$. A stage $\pi_\theta$ can belong to Case (1) or Case (2) of Step 3. We argue the two cases separately:

\subsection*{\it Case 1:} If $\pi_\theta$ belongs to Case (1), then there is a unique band $\beta \in \A_\theta \setminus \A_0$ that satisfies Inequality~\eqref{e1}. By assumption, the largest column is not in $\A_0$; hence the column $Q_\theta(\beta)$ has to be the largest. At every stage before $\pi_\theta$, no column in $\A \setminus \A_0$ satisfies Inequality~\eqref{e1} and no column in $\A_0$ moves a column in $\A \setminus \A_0$. So we get the bound
\[
\vert Q_\theta(\beta)\vert <  2 \max_{\alpha \in \A_0} \vert Q_\iota(\alpha) \vert < (1+M) \max_{\alpha \in \A_0} \vert Q_\iota(\alpha) \vert
\]
Combining with Inequality~\eqref{upper} results in the upper bound
\[
\vert Q_\theta(\beta) \vert < (1+M)\max_{\alpha \in \A_0} \vert Q_\iota(\alpha) \vert < C_1'(1+M) \max_{\alpha \in \A_0} \vert Q(\alpha) \vert
\]
and so the collection $\A_\theta$ is $C_0C_1'(1+M)$-distributed.

\subsection*{\it Case 2:} If $\pi_\theta$ belongs to Case (2), then in the final split before $\pi_\theta$, some band $\alpha$ in $\A_0$ is split by some band $\beta$ in $\A \setminus \A_0$. So it satisfies~\eqref{e2}. Along the sequence $\pi_\iota \to \cdots \to \pi_\theta$, no band in $\A \setminus \A_0$ satisfies Inequality~\eqref{e1} at any intermediate stage. This implies that the column $Q_\theta(\beta)$ has to be the largest in norm. It also implies that
\begin{equation}\label{e6}
\vert Q_{\theta-1}(\beta) \vert < \max_{\alpha \in \A_0} \vert Q_\iota(\alpha) \vert
\end{equation}
Inequalities~\eqref{upper} and~\eqref{e6} give the upper bound
\begin{equation}\label{e7}
\vert Q_\theta(\beta) \vert < (1+M) \max_{\alpha \in \A_0} \vert Q_\iota(\alpha) \vert < C_1'(1+M) \max_{\alpha \in \A_0} \vert Q(\alpha) \vert
\end{equation}
and so the collection $\A_\theta$ is $C_0C_1'(1+M)$-distributed.
\end{proof}
Setting $c_1 = (1/4) c_1'$ and $C_1 = C_0C_1'(1+ M)$, we conclude the proof of the Proposition~\ref{inductive-step}.

\end{proof}

\noindent Iterating Proposition~\ref{inductive-step}, we get

\begin{proposition}\label{equidis}
There exist constants $c \in (0,1)$ and $C > 1$ that depend only on $d$, and a finite number of future stages $\pi_\tau$ after $\pi$, such that
\begin{enumerate}
\item The sets $\Gamma Q_\tau(W_\tau)$
have disjoint interiors and
\[
\sum_{\tau} \ell(\Gamma Q_\tau(W_\tau)) \geq c \cdot \ell(\Gamma Q(W))
\]
\item Each stage $\pi_\tau$ is $C$-distributed
\end{enumerate}
\end{proposition}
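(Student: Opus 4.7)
The plan is to iterate Proposition~\ref{inductive-step} a bounded number of times, starting from the trivially distributed one-element collection $\A_0^{(0)} = \{\alpha_{\max}\}$, where $\alpha_{\max}$ is the band whose column has the largest norm at $\pi$. This singleton is automatically $C_0^{(0)}$-distributed with $C_0^{(0)} = 1$ and it contains, tautologically, the largest column.

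Applying Proposition~\ref{inductive-step} at step $k$, I would pass from each current stage $\pi^{(k-1)}$ (equipped with a $C_0^{(k-1)}$-distributed subset $\A^{(k-1)}$ containing the largest column) to a finite family of future stages $\pi^{(k)}_\theta$, each carrying a strictly larger distributed subset $\A^{(k)}_\theta \supsetneq \A^{(k-1)}$ with a new constant $C_0^{(k)}$ depending only on $C_0^{(k-1)}$ and $d$, and with the total Lebesgue measure of the corresponding images $JQ^{(k)}_\theta(W^{(k)}_\theta)$ at least $c_1^{(k)}$ times the measure of $JQ^{(k-1)}(W^{(k-1)})$. Because $\lvert \A^{(k)}_\theta \rvert$ strictly increases with $k$ and is bounded by $d$, after at most $d-1$ iterations the distributed subset must exhaust $S$, which is exactly the conclusion that the columns themselves are $C$-distributed for $C = C_0^{(d-1)}$.

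To assemble the final family $\{\pi_\tau\}$, I would take the union of all stages produced at the last iteration. Disjointness of the interiors of the $JQ_\tau(W_\tau)$ follows inductively: cylinder sets corresponding to distinct finite splitting sequences have disjoint interiors, and the inductive step of Proposition~\ref{inductive-step} already produces disjoint cylinders, so descendants of distinct $\pi^{(k)}_\theta$ remain disjoint. The measure bound multiplies: the total measure of the final family is at least $c = \prod_{k=1}^{d-1} c_1^{(k)}$ times $\ell(JQ(W))$. Since each $c_1^{(k)}$ and each $C_0^{(k)}$ depends only on $d$ (built recursively from $C_0^{(0)} = 1$), the resulting constants $c$ and $C$ depend only on $d$, as required.

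The only step that requires genuine care is verifying that the disjointness and measure bounds are preserved under iteration; this is essentially bookkeeping, since Proposition~\ref{inductive-step} is stated in a form ready to be iterated, but one must note that when it is applied separately to each $\pi^{(k-1)}_\theta$, the resulting subfamilies live in disjoint cylinders and hence may be combined without overlap. No new analytic estimate is needed beyond what Proposition~\ref{inductive-step} already supplies.
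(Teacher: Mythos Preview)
Your proposal is correct and is exactly the approach the paper takes: the paper's entire proof of Proposition~\ref{equidis} is the single line ``Iterating Proposition~\ref{inductive-step}, we get,'' and you have simply spelled out the bookkeeping behind that iteration (starting from the singleton $\{\alpha_{\max}\}$, growing the distributed collection strictly at each step, and multiplying the measure constants). The only minor imprecision is that different branches may exhaust $S$ at different iteration levels, so the final family should collect terminal stages from each branch rather than only those at the $(d-1)$-st level; this does not affect the argument.
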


\noindent To prove Theorem~\ref{Uniform-distortion}, apply Proposition~\ref{equidis} to $\pi$ to get a set of stages, which we now denote by $\pi^{(1)}_\tau$, that are $C$-distributed. Let $X^{(1)}$ be the union of the sets $\Gamma Q_\tau(W_\tau)$. Consider the complement $\Gamma Q(W) \setminus X^{(1)}$. Write it as a union of stages with disjoint interiors. Apply Proposition~\ref{equidis} to each of them to get a new set of stages, which we denote by $\pi^{(2)}_\tau$, that are $C$-distributed. Let $X^{(2)}$ denote the union of the sets that correspond to the new $C$-distributed stages. Now consider the complement $\Gamma Q(W)\setminus X^{(1)} \cup X^{(2)}$ and iterate the process.

As a result, we construct an infinite sequence of sets $X^{(n)}$ with pairwise intersections of measure zero, such that they are a union of $C$-distributed stages. The set of $\x \in \Gamma Q(W)$ whose expansion gets $C$-distributed at some stage after $\pi$, is equal to the infinite union of the sets $X^{(n)}$. Let
\[
Y^{(n)} = \Gamma Q(W) \setminus \bigcup_{i=1}^{n-1} X^{(i)}
\]
By Proposition~\ref{equidis}, we know that $\ell(Y^{(n)}) \leq (1-c) \ell(Y^{(n-1)})$. Hence by induction
\[
\lim_{n \to \infty} \ell(Y^{(n)}) = 0
\]
So the infinite union of the sets $X^{(n)}$ has full measure which concludes the proof of Theorem~\ref{Uniform-distortion}. An immediate consequence of Theorem~\ref{Uniform-distortion} is

\begin{theorem}\label{uniform-infinite}
For almost every $\x \in W_0$, the expansion of $\x$ becomes $C$-distributed infinitely often.
\end{theorem}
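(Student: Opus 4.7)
The plan is to derive Theorem~\ref{uniform-infinite} from Theorem~\ref{Uniform-distortion} by iterating the latter inductively. For each integer $k \geq 1$, let $A_k \subseteq W_0$ denote the set of $\x$ whose expansion contains at least $k$ stages that are $C$-distributed. Then $A_1 \supseteq A_2 \supseteq \cdots$, and the set in the conclusion of Theorem~\ref{uniform-infinite} is exactly $\bigcap_{k \geq 1} A_k$. I will show by induction that $\ell(A_k) = 1$ for every $k$; the conclusion then follows because a countable intersection of full-measure sets has full measure.

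The base case $k=1$ is just Theorem~\ref{Uniform-distortion} applied to the starting stage $\pi_0$, since $JQ_0(W_{r_0}) = W_0$ and, as noted at the end of Section~\ref{Jacobian}, $C$-uniform distortion is equivalent (up to replacing $C$ by $C^{d-1}$) to the columns of $Q$ being $C$-distributed. For the inductive step, assume $\ell(A_k) = 1$, and for $\x \in A_k$ let $\tau(\x)$ denote the $k$-th stage at which the columns of the associated matrix are $C$-distributed. Since every stage is encoded by a finite directed path in the reduced Rauzy diagram $G$, the range of $\tau$ lies in a countable set $\mathcal{S}$ of finite stages. Partition $A_k$, up to a null set, as
\[
A_k \;=\; \bigsqcup_{\pi_\tau \in \mathcal{S}} A_k^\tau, \qquad A_k^\tau \;=\; \{\, \x \in A_k : \tau(\x) = \pi_\tau \,\},
\]
and note that $A_k^\tau \subseteq JQ_\tau(W_\tau)$. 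Applying Theorem~\ref{Uniform-distortion} at each $\pi_\tau$, almost every $\x \in JQ_\tau(W_\tau)$ admits a strictly later stage that is $C$-distributed; this provides a $(k{+}1)$-st $C$-distributed stage in the expansion of $\x$. Hence $A_k^\tau \subseteq A_{k+1}$ up to a null set for every $\tau$, and summing over the countable family yields $\ell(A_{k+1}) = \ell(A_k) = 1$.

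The only point that requires care is verifying that the decomposition of $A_k$ by the location of its $k$-th $C$-distributed stage is indeed a countable disjoint union of measurable pieces, so that the almost-everywhere statements of Theorem~\ref{Uniform-distortion} can be summed without set-theoretic pathology. This is automatic: finite splitting sequences form a countable set, the assignment $\x \mapsto \pi_{\x, n}$ is Borel measurable on the full-measure subset of $W_0$ where the expansion is defined, and the condition of being $C$-distributed at the $n$-th stage cuts out a Borel set. Beyond this measurability bookkeeping, the inductive step is a soft consequence of Theorem~\ref{Uniform-distortion}, which plays the role of supplying a uniform positive conditional probability at every stage.
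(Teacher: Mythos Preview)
Your proof is correct and takes essentially the same approach as the paper: both derive the theorem from Theorem~\ref{Uniform-distortion} applied at every stage, using the countability of finite splitting sequences. The paper phrases it as a complement argument (the set of $\x$ that are $C$-distributed only finitely often is a countable union of the measure-zero sets furnished by Theorem~\ref{Uniform-distortion}), while you phrase it as an induction on the number of $C$-distributed stages; these are logically equivalent reorganizations of the same idea, and your version simply makes the measurability bookkeeping more explicit.
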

\begin{proof}
Theorem~\ref{Uniform-distortion} says that the set whose expansion never gets $C$-distributed is measure zero. The set of $\x$ whose expansion gets $C$-distributed finitely many times is a countable union of such measure zero sets. Hence it is measure zero, proving the theorem.
\end{proof}

\section{Normality}\label{proof-of-normality}
In this section, we prove normality, i.e.~Theorem~\ref{normality}. For every $\pi'$ in the attractor, choose a {\em shortest} splitting sequence from $\pi'$ to $\pi_0$. This gives a finite number of chosen splitting sequences. This means that for any $C$-distributed $\pi'$
\begin{enumerate}
\item With probability bounded below by a constant $c'>0$ that depends only on $d$, it is possible  to split from the $C$-distributed $\pi'$ to a stage that ends at $\pi_0$ using the chosen sequence chosen above .
\item Splitting from $\pi'$ to a stage as above, introduces a bounded amount of distortion in the measure, where the bound depends only on $d$.
\end{enumerate}
The statements above imply that in Proposition~\ref{equidis}, for different constants $c \in (0,1)$ and $C>1$ which depend only on $d$, one gets $C$-distributed with a relative probability greater than $c$, such that the $C$-distributed stages have generalized permutation $\pi_0$. In other words, the uniformly distorted stages in Theorem~\ref{Uniform-distortion} can be assumed to have generalized permutation $\pi_0$. Moreover, because of this additional conclusion in Theorem~\ref{Uniform-distortion} and the fact that we are in the attractor, it is enough to prove normality for finite sequences that start from $\pi_0$.

Let $\jmath = \pi_0 \to \cdots \to \pi$ be a finite sequence of splits. We prove the stronger form of normality stated below, which is necessary to prove unique ergodicity.

\begin{theorem}[Strong Normality]\label{Strong Normality}
In almost every expansion, for any finite sequence $\jmath$ starting from $\pi_0$, there are infinitely many instances in which $\jmath$ immediately follows a $C$-distributed stage.
\end{theorem}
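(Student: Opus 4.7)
The plan is to combine Theorem~\ref{Uniform-distortion}, in the strengthened form (with $C$-distorted stages combinatorially equal to $\pi_0$) discussed at the opening of this section, with a conditional Borel--Cantelli argument. Fix a finite splitting sequence $\jmath = \pi_0 \to \cdots \to \pi$ starting at $\pi_0$, with associated matrix $Q_\jmath$ and configuration space $W_\jmath$, and set
\[
p(\jmath) \;=\; \ell\bigl(JQ_\jmath(W_\jmath)\bigr),
\]
the probability with respect to the normalized Lebesgue measure on $W_0$ that a random expansion begins with $\jmath$; this is strictly positive.

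The first step is a deterministic lower bound on conditional probabilities. If $\pi'$ is any $C$-uniformly distorted stage which is combinatorially equal to $\pi_0$, with matrix $Q'$ and configuration space $W'$, then Definition~\ref{C-distortion} forces $\J(JQ')$ to vary by a factor at most $C$ over $W'$. Since both $W'$ and $W_0$ have normalized measure $1$, the change-of-variables formula yields
\[
\frac{\ell\bigl(J(Q'Q_\jmath)(W_\jmath)\bigr)}{\ell\bigl(JQ'(W')\bigr)} \;=\; \frac{\int_{JQ_\jmath(W_\jmath)} \J(JQ')\,d\y}{\int_{W'} \J(JQ')\,d\y} \;\geq\; \frac{p(\jmath)}{C}.
\]
The essential feature is that this lower bound is independent of the particular stage $\pi'$.

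The second step is an iteration using stopping times. Let $\F_n$ denote the $\sigma$-algebra on $W_0$ generated by the first $n$ splits. Iterating the strengthened Theorem~\ref{Uniform-distortion} on successive tails of the expansion (after discarding an initial segment of length $|\jmath|$ each time) produces, for almost every $\x$, an infinite increasing sequence of indices $\tau_1 < \tau_2 < \cdots$, each $\F_{\tau_k}$-measurable, at which $\pi_{\x,\tau_k}$ is $C$-uniformly distorted and combinatorially equal to $\pi_0$, and with $\tau_{k+1} > \tau_k + |\jmath|$. Let $E_k$ be the event that $\jmath$ follows $\pi_{\x,\tau_k}$. The spacing ensures the $E_k$ depend on disjoint portions of the splitting path after $\tau_k$, and step~1, applied at $\pi_{\x, \tau_k}$, gives the conditional bound
\[
P\bigl(E_k \,\big|\, \F_{\tau_k}\bigr) \;\geq\; \frac{p(\jmath)}{C} \quad \text{almost surely.}
\]
L\'evy's conditional Borel--Cantelli lemma then yields $P(E_k \text{ infinitely often}) = 1$. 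Intersecting the resulting full-measure sets over the countable collection of finite splitting sequences $\jmath$ in the sink $G_0$ starting at $\pi_0$ completes the proof.

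The step I expect to be most delicate is the measure-theoretic bookkeeping for the stopping-time argument: one must check that, given $\F_{\tau_k}$, the conditional distribution of the tail of the expansion is (up to normalization) the pushforward of Lebesgue measure on $W_{r_{\tau_k}}$ under $JQ_{\x, \tau_k}$, so that the deterministic estimate of step~1 translates cleanly into the required conditional probability bound. Once this framework is in place, the remaining verifications are routine.
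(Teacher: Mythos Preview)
Your proposal is correct and follows essentially the same line as the paper: both use uniform distortion at the successive $C$-distributed stages (combinatorially equal to $\pi_0$) to get a uniform lower bound on the conditional probability that $\jmath$ appears next, and then conclude via a Borel--Cantelli argument. The only real difference is one of rigor: the paper simply notes that $\ell(X_k)\approx\mu$ for each $k$ and then cites ``the standard Borel--Cantelli Lemma'' from $\sum_k \ell(X_k)=\infty$, without addressing independence, whereas you set up explicit stopping times $\tau_k$, verify the conditional bound $P(E_k\mid\F_{\tau_k})\ge p(\jmath)/C$, and invoke L\'evy's conditional Borel--Cantelli. Your version makes the dependence structure explicit, which is exactly the ``measure-theoretic bookkeeping'' you flag as delicate; this is the honest way to carry out what the paper's proof leaves implicit.
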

\begin{proof}
By Theorem~\ref{uniform-infinite}, for every $k \in \N$, there is a $k$-th instance of $C$-distribution in the expansion of almost every $\x \in W_0$. Let $\pi_{\x,n_k}$ be the $C$-distributed stage that is the $k$-th instance. For a finite sequence $\jmath$ in the attractor, let $X_k$ denote the set of all $\x$ for which the expansion after $\pi_{\x,n_k}$ begins with $\jmath$. Recall from Section~\ref{Jacobian}, that a $C$-distributed stage is $C^{d-1}$-uniformly distorted. So the probability that $\jmath$ follows any $C$-distributed stage is, up to a constant that depends only on $C$ and $d$, the same as the probability that an expansion begins with $\jmath$. Let this probability be $\mu>0$. So we have $\ell(X_k) \approx \mu$. Since
\[
\sum_{k=1}^{\infty} \ell(X_k) = \infty
\]
by the standard Borel-Cantelli Lemma, the set of $\x$ that belong to infinitely many $X_k$ has full measure.
\end{proof}

\section{Unique Ergodicity}\label{unique-ergodicity}

By Proposition~\ref{inv-measures}, to show unique ergodicity it is enough to show
\begin{theorem}\label{ergodicity}
For almost every $\x$ in $W_0$, the nested sequence of sets $\Gamma Q_n(W_n)$ given by the expansion of $\x$ satisfy
\[
\bigcap \Gamma Q_n(W_n) = \x
\]
In other words, almost surely the infinite intersection nests down to a point.
\end{theorem}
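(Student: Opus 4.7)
The plan is to show that almost surely the Euclidean diameters of the nested compact sets $JQ_n(W_n)$ tend to zero; since $\x$ lies in every $JQ_n(W_n)$ and each is a closed subset of $W_0$, Cantor's intersection theorem then yields $\bigcap_n JQ_n(W_n) = \{\x\}$.

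First I would fix a single finite splitting sequence $\jmath^*$ from $\pi_0$ to a stage combinatorially equal to $\pi_0$, chosen so that its associated matrix $E^*$ has strictly positive entries bounded below by some $\epsilon > 0$. Such a $\jmath^*$ exists because $\pi_0$ lies in the sink $G_0$ of the reduced diagram and $G_0$ is strongly connected; concatenating enough loops at $\pi_0$ in which every column moves every other column eventually produces a cumulative matrix with all entries positive.

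The technical heart is the following uniform contraction lemma: there exists a constant $\delta = \delta(C, d, E^*) \in (0,1)$ such that whenever $\pi$ is a $C$-distributed stage with matrix $Q$, the image $J(QE^*)(W^{**})$ has Euclidean diameter at most $\delta \cdot \mathrm{diam}(JQ(W))$, where $W^{**}$ is the configuration space at the end of $\jmath^*$. The heuristic is that $C$-distribution of the columns of $Q$ makes the columns, and by Section~\ref{Jacobian} also the vertices of $JQ(W)$, $C^{1/(d-1)}$-comparable in position. Each column of $QE^*$ is then a convex combination of the columns of $Q$ with weights bounded below by $\epsilon/|E^*(\cdot)|$, so after projectivization it is pushed a definite distance into the interior of $JQ(\Delta)$. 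Restricting to $W^{**}$ and comparing to $JQ(W)$ gives the uniform diameter contraction. Now apply Theorem~\ref{Strong Normality} to $\jmath^*$: for almost every $\x \in W_0$, there are infinitely many indices $n_1 < n_2 < \cdots$ at which $\pi_{\x, n_k}$ is $C$-distributed and the following $|\jmath^*|$ splits spell out $\jmath^*$. At each such occurrence, the contraction lemma multiplies $\mathrm{diam}(JQ_n(W_n))$ by $\delta$; since the diameter sequence is monotone non-increasing (the sets are nested) and gets multiplied by $\delta < 1$ infinitely often, it tends to zero, completing the proof.

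The main obstacle will be the uniform contraction lemma. In the ambient simplex this is essentially the classical statement that strictly positive matrices act as strict contractions in the Hilbert metric, combined with the fact that $C$-distribution controls the comparison between the Hilbert and Euclidean metrics on $JQ(\Delta)$. The new work specific to non-classical interval exchanges is to transfer the estimate to the codimension-one configuration spaces: one must verify, using the Jacobian formula from Section~\ref{Jacobian} and the translation of column $C$-distribution into vertex $C^{1/(d-1)}$-distribution for $W(Q)$, that the contraction persists after the restriction $JQ \colon W \to W_0$.
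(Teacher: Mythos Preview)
Your overall strategy matches the paper's: fix a finite splitting sequence $\jmath$ whose matrix has all positive entries, prove a uniform diameter contraction when $\jmath$ follows a $C$-distributed stage, and invoke Strong Normality to get infinitely many contractions. Two tactical differences are worth noting. First, the paper works with the ambient simplices $JQ_n(\Delta)$ rather than the configuration spaces $JQ_n(W_n)$; since $W_n \subset \Delta$, showing $\mathrm{diam}(JQ_n(\Delta)) \to 0$ already suffices, and this completely sidesteps the ``new work specific to non-classical interval exchanges'' that you flag as the main obstacle---no transfer to the codimension-one slice is needed at all. Second, the paper proves the contraction not via Hilbert-metric heuristics but by an elementary computation (Lemma~\ref{line-distortion}): for a $C$-distributed stage the derivative of $JQ$ along any line segment in $\Delta$ varies by at most a factor $C^2$, and Proposition~\ref{shrinkdiameter} then extracts the contraction factor directly from the Euclidean distance of $JQ_\jmath(\Delta)$ to $\partial\Delta$. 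Your Hilbert-metric route would also work on the ambient simplex, but the paper's direct line-segment calculation is short and avoids the metric-comparison step entirely.
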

\begin{proof}
It is enough to prove that almost surely, the sets $\Gamma Q_n(\Delta)$ nest down to a point. To do this, suppose there exists a finite sequence $\jmath: \pi_0 \to \cdots \to \pi_j$ with the {\it diameter shrinking} property: There exists a universal positive constant $R<1$ such that when any $C$-distributed stage $\pi$ is followed by $\jmath$, the diameter becomes less than the diameter at $\pi$ by a factor less than $R$. By strong normality, in almost every expansion, the sequence $\jmath$ follows a $C$-distributed stage infinitely often. This implies that for almost every $\x$, the diameter shrinking occurs infinitely often and so the infinite intersection of the sets $\Gamma Q_n(\Delta)$ has diameter zero, i.e.~it is a point.

It is not immediate from the definition of $C$-distribution that a sequence $\jmath$ that shrinks diameter of the initial stage by $R<1$ also shrinks diameter at every $C$-distributed stage by $R$ (up to a constant that depends only on $C$ and $d$), i.e.~that $\jmath$ has the diameter shrinking property. The subtle issue is that a $C$-distributed stage is defined using the Jacobian. So, it is not immediate that the distortion is uniform along line segments in $\Delta$. The following lemma verifies that for a $C$-distributed stage, the distortion along line segments in $\Delta$ is $C^2$-uniform.

Let $\pi$ be a $C$-distributed stage and let $Q$ be the associated matrix. Let $L$ be a line segment in $\Delta$ with endpoints $\mathbf{u}_1$ and $\mathbf{u}_2$ in $\partial \Delta$. Denote the unit tangent vector along $L$ from $\mathbf{u}_1$ to $\mathbf{u}_2$ by $v$. Let $\w_1 = Q \mathbf{u}_1$ and $\w_2 = Q \mathbf{u}_2$. As in Section~\ref{Jacobian}, denote the full derivative of $\Gamma Q$ as a map from $\Delta$ to itself by $\dr \Gamma Q$.
\begin{lemma}\label{line-distortion}
For any pair of points $\y_1$ and $\y_2$ in $L$,
\[
\frac{1}{C^2} \vert \dr \Gamma Q(\y_1)(v) \vert < \vert \dr \Gamma Q(\y_2)(v) \vert < C^2 \vert \dr \Gamma Q(\y_1)(v) \vert
\]
\end{lemma}
\begin{proof}
Parameterize $Q(L)$ by $f(t) = (1-t) \w_1 + t\w_2$. Then $g(t) = f(t)/ \vert f(t) \vert$ is a parametrization of $\Gamma Q(L)$. By additivity of the norm, $\vert f(t) \vert = (1-t) \vert \w_1 \vert + t \vert \w_2 \vert$. Computing the derivative with respect to $t$, we get $\vert f(t) \vert' = \vert \w_2 \vert - \vert \w_1 \vert$
This gives
\begin{eqnarray*}
g'(t) &=& \frac{\vert f(t) \vert f'(t) - f(t) \vert f(t) \vert'}{\vert f(t) \vert^2}\\
&=& \frac{\left((1-t)\vert \w_1 \vert + t \vert \w_2 \vert \right)\left(\w_2 - \w_1 \right)- \left((1-t)\w_1 + t\w_2\right)\left( \vert \w_2 \vert - \vert \w_1 \vert \right)}{\vert f(t) \vert^2}\\
&=& \frac{\vert \w_1 \vert \w_2 - \vert \w_2 \vert \w_1}{\vert f(t) \vert^2}
\end{eqnarray*}
This means that for any pair of values $t_1$ and $t_2$ in the interval $[0,1]$,
\begin{equation}\label{line-est}
\frac{1}{C^2} < \frac{\vert g'(t_1) \vert}{\vert g'(t_2) \vert} < C^2
\end{equation}
Let $h(t)= (1-t) \mathbf{u}_1 + t \mathbf{u}_2$ parameterize $L$. The map $\Gamma Q$ restricted to $L$ is given by $\Gamma Q(\y) = g \circ h^{-1}(\y)$. Since $h$ is a parametrization of $L$ with constant speed, the estimate~\eqref{line-est} concludes the proof.
\end{proof}

Similar to Corollary 4 from Section 4.3 of Yoccoz \cite{Yoc}, there exists a splitting sequence $\jmath$ from $\pi_0$ whose associated matrix has every entry positive. For non-classical interval exchanges, the corollary is a consequence of Proposition~\ref{critical} using an argument identical to the original proof in Yoccoz.  The corollary implies that the image of $\Delta$ under the associated map is in the interior of $\Delta$. We fix this sequence $\jmath$ for the rest of this section.

Let $t_\alpha$ be the vertices $\Gamma Q_\jmath(\alpha)$ of $\Gamma Q_\jmath(\Delta)$. Let $s$ denote the minimum distance between $t_\alpha$ and $\partial \Delta$. The number $s$ is related to the largest entry of $Q_\jmath$.

Extend the edge $[t_\alpha,t_\beta]$ of $\Gamma Q_\jmath(\alpha)$ in either direction to a line segment $L[\alpha , \beta]$ with corresponding endpoints $u_\alpha$ and $u_\beta$ in $\partial \Delta$. Then the distances $d(t_\alpha,u_\alpha)$ and $d(t_\beta, u_\beta)$ are greater than $s$. So the length of $[t_\alpha ,t_\beta]$ is at most $(1-2s)$ times the length of $L[\alpha,\beta]$.

\begin{proposition}\label{shrinkdiameter}
Suppose $\pi$ is a $C$-distributed stage and let $\pi'$ be the stage obtained by $\jmath$ following $\pi$. Then there exists a positive constant $R<1$ that depends on $s, C$ and $d$ such that
\[
\text{diam} \left(\Gamma Q'(\Delta) \right) < R \hskip 3pt \text{diam} \left( \Gamma Q(\Delta) \right)
\]
\end{proposition}

\begin{proof}
Since $\Gamma Q'(\Delta)$ is a simplex, the longest line segment in it is a side joining a pair of vertices. Since $Q' = Q \cdot Q_j$, we assume that this side is $\Gamma Q([t_\alpha,t_\beta])$. Let $v$ be the unit tangent vector along the segment $L[\alpha ,\beta]$. By Lemma~\ref{line-distortion}, the distortion of $v$ is $C^2$-uniform on $L[\alpha, \beta]$. Combining this with the fact that the length of $[t_\alpha, t_\beta]$ is at most $(1-2s)$ times the length of $L[\alpha,\beta]$, we get the distance estimate
\[
\frac{d\left(\Gamma Q(t_\alpha),\Gamma Q(t_\beta)\right)}{d\left(\Gamma Q(u_\alpha), \Gamma Q(u_\beta)\right)} < \frac{C^2(1-2s)}{C^2(1-2s)+2s}
\]
Taking $R$ to be the right hand side in the above inequality, we finish the proof of Proposition~\ref{shrinkdiameter}.
\end{proof}

Strong normality applied to the diameter shrinking sequence $\jmath$ implies that diameter shrinking happens infinitely often in almost every expansion, and so almost surely, the infinite intersection is a point.
\end{proof}
Finally, Theorem~\ref{ergodicity} implies that for almost every $\x$, there is a unique transverse probability measure. Consequently, almost every non-classical interval exchange is uniquely ergodic.

\section{Rauzy map on the parameter space}\label{Rauzy-par}

Consider the full parameter space for a strongly irreducible non-classical interval exchange. It is a disjoint union of configuration spaces, each defined by a generalized permutation in the attractor. Rauzy induction induces a map from this parameter space to itself described in Section~\ref{dynamics}. Each configuration space in the full parameter space carries a natural Lebesgue measure. A consequence of Theorems~\ref{uniform-infinite} and~\ref{ergodicity} is

\begin{theorem}\label{parameter-ergodic}
The Lebesgue measure on the full parameter space is ergodic with respect to the Rauzy map.
\end{theorem}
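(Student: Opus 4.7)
The plan is to prove ergodicity by a Lebesgue-density argument along the simplicial expansion, combining the bounded-distortion provided by Theorem~\ref{Uniform-distortion} with the nesting-to-points property of Theorem~\ref{ergodicity} to extract a contradiction whenever an $R$-invariant set and its complement both have positive measure.

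Let $A \subseteq P$ be measurable with $R^{-1}(A) = A$ up to null sets and $\ell(A) > 0$. The first step is to show that the relative measure $\ell(A \cap W_\pi)/\ell(W_\pi)$ is bounded below by a positive constant $\delta_0$ uniformly over the configuration spaces $W_\pi$ of the sink. Since $\ell(A) > 0$, some configuration space $W_{\pi^*}$ satisfies $\ell(A \cap W_{\pi^*}) > 0$; for any other $W_\pi$ in the sink, strong connectivity of the sink supplies a finite splitting sequence from $\pi$ to $\pi^*$ whose associated projective linear map is a bounded-distortion bijection between a positive-measure subset of $W_\pi$ and an open subset of $W_{\pi^*}$. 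Pulling $A \cap W_{\pi^*}$ back through this map and invoking $R$-invariance places a positive-measure subset of $A$ inside $W_\pi$. As the sink has finitely many vertices, the minimum $\delta_0 := \min_\pi \ell(A \cap W_\pi)/\ell(W_\pi) > 0$.

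Next comes the differentiation step. By Theorem~\ref{ergodicity}, for almost every $\x \in W_{\pi_0}$, the nested cylinders $JQ_n(W_{r_n})$ in $\x$'s expansion shrink to $\{\x\}$, so the filtration $\mathcal{F}_n$ generated by the level-$n$ cylinder partition of $W_{\pi_0}$ increases to the full Borel $\sigma$-algebra. The reverse martingale theorem (equivalently, Lebesgue differentiation along the nested atoms) then gives, for almost every $\x \in W_{\pi_0}$,
\[
\lim_{n \to \infty} \frac{\ell(A \cap JQ_n(W_{r_n}))}{\ell(JQ_n(W_{r_n}))} \;=\; \chi_A(\x).
\]
To close the argument, suppose for contradiction that $\ell(A^c) > 0$. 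Choose $\pi_0$ in the sink so that $A^c \cap W_{\pi_0}$ has positive measure and pick a density point $\x_0$ of $A^c$ there; along the expansion of $\x_0$, the displayed ratio converges to $0$. By Theorem~\ref{uniform-infinite}, for almost every such $\x_0$ there are infinitely many indices $n$ at which $\pi_n$ is $C$-uniformly distorted. At any such $n$, $R$-invariance yields $(JQ_n)^{-1}(A \cap JQ_n(W_{r_n})) = A \cap W_{r_n}$ (since $\y \in A \Leftrightarrow R^n(\y) \in A$ for a.e.\ $\y$), and the factor-$C$ bound on $\J(JQ_n)$ gives
\[
\frac{\ell(A \cap JQ_n(W_{r_n}))}{\ell(JQ_n(W_{r_n}))} \;\geq\; \frac{1}{C}\cdot\frac{\ell(A \cap W_{r_n})}{\ell(W_{r_n})} \;\geq\; \frac{\delta_0}{C} \;>\; 0,
\]
contradicting the convergence to $0$. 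Hence $\ell(A^c) = 0$.

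The main obstacle I anticipate is the differentiation step: translating Theorem~\ref{ergodicity} into the statement that the cylinder densities converge to $\chi_A$ pointwise a.e. The reverse martingale route avoids any geometric regularity or Vitali covering property for the atoms $JQ_n(W_{r_n})$, so the only genuine content is identifying the level-$n$ cylinders as an honest measurable partition whose $\sigma$-algebras increase to the Borel one, which is immediate from the nesting-to-a-point theorem. Besides that, the first step — uniform positivity of $\ell(A\cap W_\pi)/\ell(W_\pi)$ across the sink — requires some bookkeeping of distortion for a fixed finite collection of connecting splitting sequences, but poses no essential difficulty.
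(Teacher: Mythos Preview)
Your proof is correct and rests on the same three ingredients as the paper's---a density argument, the bounded distortion at $C$-uniformly distorted stages, and strong connectivity of the sink---but the implementation is organized differently. The paper works with the classical Lebesgue density theorem in balls: from a density point of $A^c$ it approximates a small ball by $C$-distributed cylinders (using Theorems~\ref{uniform-infinite} and~\ref{ergodicity}), extracts one cylinder with small $A$-proportion, and then uses strong connectivity to push this smallness to every $W_{\pi'}$, concluding $\ell(A)=0$. You run the dual argument: first use strong connectivity once to get a uniform lower bound $\delta_0$ on $\ell(A\cap W_\pi)/\ell(W_\pi)$ over the sink, then use martingale convergence along the cylinder filtration (which Theorem~\ref{ergodicity} shows generates the Borel $\sigma$-algebra) to force the cylinder densities at a point of $A^c$ to tend to $0$, contradicting the lower bound $\delta_0/C$ at $C$-distorted times. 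Your route is somewhat cleaner in that the martingale theorem sidesteps the ball-to-cylinder approximation that the paper leaves implicit.
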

\begin{proof}
Let $A$ be a Borel set in the full parameter space invariant under the Rauzy map and suppose that $A$ is not full measure. Then $A$ is not full measure in some configuration space $W$ defined by some generalized permutation $\pi$. Let $\x$ be a point of Lebesgue density for the complement $W \setminus A$. Given $1/2 > \epsilon > 0$, there is a ball $B(\x)$ centered at $\x$ such that the proportion of $A$ in this ball is less than $\epsilon$. 

Starting with $\pi$ as the initial generalized permutation, Theorems~\ref{uniform-infinite} and~\ref{ergodicity} imply that the ball $B(\x)$ can be closely approximated by a union of $C$-distributed stages contained in $B(\x)$. Furthermore, the argument at the beginning of Section~\ref{proof-of-normality} implies that given a generalized permutation $\pi'$ in the attractor, the $C$-distributed stages approximating $B(\x)$ can be all arranged to have generalized permutation $\pi'$. Let us suppose the approximation is chosen so that the union of these $C$-distributed stages has measure greater than $(1-\epsilon)\ell(B(\x))$. If the proportion of $A$ in each of these $C$-distributed stages were greater than $2 \epsilon$, then that would imply that the proportion of $A$ in $B(\x)$ is greater than $2\epsilon (1-\epsilon)> \epsilon$, a contradiction. This means that there exists at least one $C$-distributed stage such that the proportion of $A$ in it is at most $2 \epsilon$. 

Let $Q$ be the matrix associated to this particular stage and let $W'$ be the configuration space defined by $\pi'$. By the invariance of $A$ under the Rauzy map
\[
\frac{\ell(A \cap W')}{\ell(W')}= \frac{\ell(A \cap \Gamma Q(W'))}{\ell(\Gamma Q(W'))} \leqslant 2 \epsilon
\]
i.e., the proportion of $A$ in $W'$ is at most $2 \epsilon$. Since $\epsilon$ can be made arbitrarily small and $\pi'$ was chosen to be any generalized permutation in the attractor, the conclusion is that $A$ must have measure zero. Thus the invariant sets for the Rauzy map on the parameter space have zero or full measure i.e. the Rauzy map is ergodic.
\end{proof}

\end{document}